\providecommand{\R}{\mathbb{R}}
\providecommand{\N}{\mathbb{N}}
\providecommand{\eps}{\varepsilon}
\renewcommand{\leq}{\leqslant}
\renewcommand{\geq}{\geqslant}
\renewcommand{\div}{\operatorname{div}}
\newcommand{\curl}{\operatorname{curl}}
\newcommand{\CinT}{C_{\textnormal{in}}^T}
\newcommand{\dd}{\textnormal{d}}
\newcommand{\fin}{f_{\textnormal{in}}}
\newcommand{\Lip}{\textnormal{Lip}}
\newtheorem{Theorem}{Theorem}
\newtheorem{Definition}{Definition}
\newtheorem{Corollary}{Corollary}
\newtheorem{Proposition}{Proposition}
\newtheorem{Lemma}{Lemma}
\newtheorem{Remark}{Remark}
\begin{document}

\date{\today}
\title{A 2d spray model with gyroscopic effects.  }

\author{Ayman Moussa\footnote{CNRS, UMR 7598, Laboratoire Jacques-Louis Lions, F-75005, Paris, France}
\footnote{UPMC Univ Paris 06, UMR 7598, Laboratoire Jacques-Louis Lions, F-75005, Paris, France}, 
Franck Sueur\footnotemark[1],\footnotemark[2] 
}

\maketitle

\begin{abstract}
In this paper we introduce a PDE system which aims at describing the dynamics of a dispersed phase of particles moving into an incompressible perfect fluid, in two space dimensions. The system couples a Vlasov-type equation and an Euler-type equation: the fluid acts on the dispersed phase through a gyroscopic force whereas the latter contributes to the vorticity of the former.

 First we give a Dobrushin type derivation of the system as a mean-field limit of a PDE system which describes the dynamics of a finite number of massive pointwise particles moving into an incompressible perfect fluid.
  This last system is itself inferred from the paper \cite{smallbody}, where  the system for one massive pointwise particle was derived as the limit of the motion of a solid body  when the body shrinks to a point with fixed mass and  circulation.

Then we deal with the well-posedness issues including the existence of weak solutions.
Next we exhibit the  Hamiltonian structure of the system and 
finally, we study the behavior of the system in the limit where the mass of the particles vanishes.
\end{abstract}

\section{Introduction}

\subsection{The model at stake}

This paper is devoted to the following PDE system
\begin{eqnarray}
\label{VE1}
 \partial_t \omega + \div_x (\omega  u)  &=& 0 ,
\\   \label{VE2} 
 \partial_t  f + \div_x (f  \xi ) + \div_\xi (  f (\xi - u  )^{\perp}) &=& 0 ,
\end{eqnarray}
where 
\begin{eqnarray}
\label{VE3} u :=K  \lbrack \omega + \rho  \rbrack  \text{ and } \rho :=  \int_{\R^2  } f d\xi .
\end{eqnarray}
This system describes a dispersed phases of particles (or spray) moving into an incompressible perfect fluid, using a kinetic/fluid approach. The fluid is described \emph{via} macroscopic quantities : $\omega$ stands for its vorticity and $u$ for its velocity, both depending on the time $t\in\R_+$ and the space variable $x\in\R^2$. The spray is represented by a probability density function : $f$, depending on $t$, $x$ and $\xi \in \R^2$, the velocity of the particles. 
The notation $K$ in  \eqref{VE3} stands for the 
Biot-Savart operator associated to the full plane; it maps a reasonable scalar function $g$ to the vector field
\begin{eqnarray}
\label{BS}
K \lbrack g \rbrack (x) :=  \int_{ \R^{2}} H (x-y)  g (y) dy , 
\end{eqnarray}
where 
\begin{eqnarray}
\label{NBS}
H(x)  = \frac{1}{2\pi} \frac{x^{\perp}}{|x|^{2}} .
\end{eqnarray}
Here  the notation $x^\perp $ stands for $x^\perp =( -x_2 , x_1 )$, 
when $x=(x_1,x_2)$.
 Let us also observe that 
\begin{eqnarray*}
H(x)  = \nabla^\perp G (x)  .
\end{eqnarray*}
where $G$ is the $2$d Newtonian potential
\begin{eqnarray*}
  G (x) :=  \frac{1}{2 \pi} \ln (x) 
  \end{eqnarray*}
 and $ \nabla^\perp$ denotes the vector field $(-\partial_2 , \partial_1 )$.
  Moreover $K [g] $ is divergence-free and such that $\curl  K [g] = g$.
Eq.\eqref{VE3} defines the vector field $u$ which describes the fluid velocity. This vector field $u$ couples the Vlasov equation \eqref{VE1} and the Euler equation \eqref{VE2}: the fluid gives a lift acceleration to the particles (the $\div_\xi$ term in \eqref{VE2}), whereas the spray, as a whole, contributes to the vorticity of the former (equations $\eqref{VE3}$). Following the naming of \cite{oro}, let us stress that the system \eqref{VE1}-\eqref{VE3} describes the behavior of a thin spray: there is no interaction (collisions, coalescences) between particles thus the kinetic equation is linear in $f$ (given $u$) and furthermore the volume fraction occupied by the spray is neglected.

\subsection{A comparison with some other fluid/kinetic coupling}

Numerous examples of fluid/kinetic coupling have been studied since the seminal models introduced in \cite{caf-pap,wil}, but the coupling is usually made \emph{via} a drag force and not a lift force. Let us compare our system with some of the recents models introduced in the literature. An alternative way to describe the system \eqref{VE1}-\eqref{VE3} is to use the following velocity formulation:
\begin{eqnarray}
\label{VE1v}
 \partial_t u  + \div_x (u \otimes  u)  + \nabla p &=& (\rho u - j)^\perp  ,
 \\ \label{VE1vDIV} \div_x u  &=& 0 
\\   \label{VE2v} 
 \partial_t  f + \div_x (f  \xi ) + \div_\xi (  f (\xi - u  )^{\perp}) &=& 0 ,
\end{eqnarray}
where 
\begin{eqnarray}
\label{VE3v}  \rho :=  \int_{\R^2  } f d\xi   \text{ and } j := \int_{ \R^{2}}  f \xi d\xi .
\end{eqnarray}

Formally  one can obtain the previous formulation from  the system   \eqref{VE1}-\eqref{VE3} as follows:
one integrates the equation  \eqref{VE2} with respect to $\xi \in  \R^{2}$ to obtain the macroscopic conservation law:
\begin{eqnarray}
\label{VE4v}  \partial_t \rho + \div_{x} j  = 0.
\end{eqnarray}
Then one adds  the equation  \eqref{VE4v} to  the equation  \eqref{VE1} to get 
\begin{eqnarray}
\label{VE5v}  \partial_t (\omega + \rho ) + \div_{x} (j  +\omega u )= 0.
\end{eqnarray}
But, taking into account that $u$ is divergence free, 
\begin{eqnarray*}
 \partial_t (\omega + \rho ) + \div_{x} \big( (\omega  + \rho )u \big)=  \curl \big(  \partial_t u  + \div_x (u \otimes  u)  \big)  \text{ and }
 \  \div_{x} (j - \rho u )  =  - \curl \big( (\rho u - j)^\perp  \big) ,
\end{eqnarray*}
so that \eqref{VE5v} now reads:
\begin{eqnarray*}
  \curl \big(  \partial_t u  + \div_x (u \otimes  u)  -  (\rho u - j)^\perp  \big) = 0 ,
\end{eqnarray*}
what means that $ \partial_t u  + \div_x (u \otimes  u) -  (\rho u - j)^\perp$ is a gradient, thus yielding  \eqref{VE1v}.

The system $(7)-(9)$ of \cite{baranger} reads, with the present notations and for a homogeneous incompressible fluid, 
\begin{eqnarray}
\label{VE1vBD}
 \partial_t u  + \div_x (u \otimes  u)  + \nabla p &=& j - \rho u   ,
  \\ \label{VE1vDIVBD} 
  \div_x u  &=& 0,
\\   \label{VE2vBD} 
 \partial_t  f + \div_x (f  \xi ) + \div_\xi (  f (u - \xi )) &=& 0 ,
\end{eqnarray}
This can also be seen as the model $(1)-(3)$ of \cite{Carrillo} where the diffusion term of the kinetic equation has been dropped.

The model \eqref{VE1vBD}-\eqref{VE2vBD} involves the drag force $ (j - \rho u )$ whereas the model  \eqref{VE1v}-\eqref{VE2v} involves the gyroscopic force $-  (j - \rho u )^\perp$.
Let us mention that if several papers have dealt with spray models with drag effect, see for instance the introduction of \cite{Carrillo} for more references, it is, to our knowledge, the first paper where a spray with a gyroscopic effect is considered.

\subsection{Plan of the paper}

In the next section we recall the basics from optimal transportation theory that we will need in the sequel, in particular in Section \ref{MEF} where we will derive  the
system \eqref{VE1}-\eqref{VE3} as  a mean-field limit of a PDE system which describes the  dynamics of a finite number of massive pointwise particles moving into an incompressible perfect fluid. This last model originates from the paper \cite{smallbody}  and can be seen as an extension of the point vortices-wave system introduced by Marchioro and Pulvirenti to the case where the vortices have a non vanishing mass. 
We will obtain  \eqref{VE1}-\eqref{VE3} in the mean-field limit in a regularized setting by adapting the famous approach used by Dobrushin  \cite{Dobrushin} for a regularized Vlasov-Poisson system.

In  Section \ref{existence} we will give some existence results for the solutions of  \eqref{VE1}--\eqref{VE3} which include the case of quite irregular initial data.

In  Section \ref{Younik} we will give a uniqueness result for the solutions of  \eqref{VE1}--\eqref{VE3}  with $ \omega$ and $\rho$ bounded.
We will follow the analysis of the Vlasov-Poisson system performed by Loeper in \cite{Loeper} which makes use of optimal transportation.

In section \ref{PoissonStructure}
we exhibit a Hamiltonian structure of the equations 
by introducing a  Poisson structure on the  manifold $ \mathcal{P}$ of the pairs $(\omega,f)$ such that equations \eqref{VE1}--\eqref{VE3} yields some Hamiltonian ordinary differential equations for any  smooth functional of a solution $(\omega,f)$.

Finally in Section \ref{MassLess} we study what happens  when the mass of  the particles immersed into the fluid vanishes. We will see that the particles are then convected  by the fluid velocity so that the system degenerates into the usual  incompressible Euler equations.
We will follow a  strategy used by Brenier to deal with the gyrokinetic limit of the 
 Vlasov-Poisson equations.

\subsection{A few general notations}

In the whole paper, $\N$ will denote the set of nonnegative integers including $0$, whereas the notation $\N^*$ will stand for $\N^* := \N \setminus 0$. 
We will simply denote $| \cdot | $ the Euclidean norm in $\R^d$, where $d$ denotes a positive integer.\vspace{1mm}\\
A mapping $f$ from $\R^{d_1}$ to  $\R^{d_2}$ is said Lipschitz if there exists a constant $c>0$ such that for any $x,y$ in  $\R^d$, $| f(x) - f(y)  | \leq c | x-y | $. Then the smallest admissible constant $c$ defined a norm denoted $\| f  \|_{\text{Lip}}$ and the associated Banach space is noted $\textnormal{Lip}(\R^{d_1},\R^{d_2})$, or simply $\textnormal{Lip}(\R^{d_1})$ if $d_2=1$.
\vspace{1mm}\\
When no ambiguity arises on the time interval $[0,T]$ considered, we will denote by $L^p_{t}(L^q_x)$ and $L^p_t(L^q_{x,\xi})$ the spaces $L^p\big([0,T];L^q(\R^2)\big)$ and $L^p\big([0,T];L^q(\R^2\times\R^2)\big)$, for all $p,q\in[1,\infty]$. Similar notation for $\mathscr{C}^0_t(E)$ for continuous functions of $t$ having value in some functional space $E$. When no variable is added in index, $\|h\|_{L^p}$ denotes the $L^p$ norm of $h$ in all its variables, same thing for Sobolev norms. Thus for example, if $h$ is defined for $(t,x)\in[0,T]\times\R^2$, $\|h\|_{L^p}$ denotes its $L^p([0,T]\times\R^2)$ norm, while $\|h(t)\|_{L^p}$ denotes the $L^p(\R^2)$ norm of $h(t)$. Given an open subset $\Omega$ of  $\R^d$,  $\mathscr{D}(\Omega)$ denotes the space of test functions having compact support in $\Omega$, $\mathscr{D}'(\Omega)$ is hence the associated dual space of distributions.
\vspace{1mm}\\
We will use the same notation $C$ for some constants which may change from line to line. We will sometimes use $C_a$ to precise that the constant yet depends on $a$. 

\section{A few tools from optimal transportation theory}

We fix from now on a few notations from optimal transportation for which we refer to \cite{AGS,Villani}.
Here $d$ (it may be indexed), denotes a positive integer. It will be taken equal to $2$ or $4$ in the sequel, as the vorticity $\omega(t)$  depends on $x \in \R^2$ while the density  $f(t)$ depends on $(x,\xi)$ in $\R^2 \times \R^2$. Note also that $f$ being a probability density function, it is always nonnegative, whereas we do not specify  any sign condition for the fluid vorticity $\omega$.

\subsection{Measures}

We will  denote by  $\mathcal{M} ( \R^d) $ the set of signed measures and by $\mathcal{M}^+(\R^d)$  the set of finite measures on $ \R^d$. 
 We endow  $\mathcal{M} ( \R^d) $ with the standard narrow convergence, given by the duality with continuous and bounded functions, that is to say that a sequence $(\nu_n )_{n}$ in $\mathcal{M} ( \R^d) $ narrowly converges to $\nu$ in $\mathcal{M} ( \R^d) $ if for any $\phi$ in the space $\mathscr{C}_{b}^0 ( \R^d) $ of the  continuous and bounded functions over $\R^{d}$ (taking real values), 
\begin{align*}
\int_{\R^d } \phi \,d\nu_{n} \operatorname*{\longrightarrow}_{n  \rightarrow +\infty}  \int_{\R^d } \phi \,d\nu.
 \end{align*}
  For a measure $\nu$ in  $\mathcal{M} ( \R^d) $ we denote by $|\nu|$ its total variation measure, and by  $\nu^{+}$ and  $\nu^{-}$ its positive and negative parts, all three given by the Jordan-Hahn decomposition so that $\nu = \nu^{+} - \nu^{-}$, and $|\nu|=\nu^+ + \nu^-$. 
 
 We denote  $\mathcal{P} ( \R^d) $ the set of the probability measures  that is the subset of $\mathcal{M}^{+} ( \R^d)$ verifying $\nu(\R^d)=1$.
 
Given a measure $\nu$ in $\mathcal{M} (\R^{d_1}) $ and a map $\tau : \R^{d_1} \rightarrow  \R^{d_2} $  we define the pushforward measure $\tau_{\#}\nu\in\mathcal{M}(\R^{d_2})$ of $\nu$ by $\tau$  by 
\begin{eqnarray}
\label{PF}
 \tau_{\#}\nu  [B] :=   \nu [\tau^{-1}(B)] , 
\end{eqnarray}
for any Borel subset $B$ of $\R^{d_2}$.

Given a measure $\gamma$ over the product space $\R^d \times \R^d =\R^{2d} $ and the projections $\pi^1$ and $\pi^2$ on it factors, we define the first and second marginals of $\gamma$ as the measures $\pi^1_{\#} \gamma$ and $\pi^2_\#\gamma$. This means that for any Borel set $B$ of $\R^d$,
 \begin{eqnarray*}
\pi^1_{\#} \gamma [B] = \gamma  [B \times  \R^d] \text{ and } \pi^2 _{\#} \gamma  [B] = \gamma  [\R^d \times  B].
\end{eqnarray*}

 For any $p\in[1,\infty[$  we introduce the subspace of signed measures having a finite $p$-th moment
\begin{align*}
\mathcal{M}_p(\R^d) := \left\{\nu \in \mathcal{M}(\R^d)\,:\, \int_{\R^d}|x|^p d |\nu| < \infty\right\},
\end{align*}
and defined similarly $\mathcal{M}_p^+(\R^d)$ and $\mathcal{P}_p(\R^d)$.
 
 \subsection{Wasserstein distances for measures}

 We will make use of the Wasserstein distances. Let us first recall the standard definition for measures.
\begin{Definition}[] 
\label{Wasserstein}
 Let $p$ be a real satisfying $p\geqslant 1$,  $\nu_1$ and  $\nu_2$ be in $\mathcal{M}^+_{p} ( \R^d) $ two finite measures on $\R^d$ such as $\nu_1(\R^d) = \nu_2(\R^d)$. We define the Wasserstein distance $W_p ( \nu_1 ,\nu_2 )$ of order $p$ between  this two measures by 
\begin{eqnarray}
W_{p} ( \nu_1 ,\nu_2 ) := \left( \inf_{ \gamma} \int_{\R^d  \times  \R^d} \gamma (x,y)  | x-y  |^p dx dy \right)^{1/p}
\end{eqnarray}
where the infimum is taken over all $\gamma \in \mathcal{M}^+(\R^d \times \R^d)$ with marginals  $\nu_1$ and  $\nu_2$.
\end{Definition}
It can be shown that for real $p$ satisfying $p\geqslant 1$, for any positive real number $\alpha >0$,  $W_p $ satisfies all the axioms of a metric on the subset of $\mathcal{M}^{+}_p ( \R^d)$ of measures verifying $\nu(\R^d)=\alpha$.
 Furthermore, convergence with respect to  $W_p$ is equivalent to the narrow convergence of measures plus convergence of $p$th moments.

 \subsection{Wasserstein distances for signed measures}

For signed measures, there are several ways to define some Wasserstein distances. Let us refer here to \cite{AMS}.
In the sequel we will only need to consider some cases where the  positive  parts have the same  total mass, and the negative parts as well, this motivates the following definition
\begin{Definition}
We will say that two signed measures $\nu_1$ and $\nu_2$ on $\R^d$ are compatible whenever $\nu_1^\pm(\R^d)=\nu_2^\pm(\R^d)$. More generally, we will say that a family of signed measures is compatible if any pair of its elements is compatible.
\end{Definition} 
Yet we will distinguish two cases depending on whether $p$ is equal to $1$ or not. Let us start with the second case.
\begin{Definition}[] 
\label{WassersteinSigne}
 Let $p\in]1,\infty[$. Given two compatible elements $\nu_1$, $\nu_2$  of $\mathcal{M}_{p}  ( \R^d) $, we define the Wasserstein distance $W_p ( \nu_1 ,\nu_2 )$ of order $p$ between  these two measures by 
\begin{eqnarray}
W_{p} ( \nu_1 ,\nu_2 ) := \Big(  W_{p} ( \nu^+_1 ,\nu^+_2 )^p +  W_{p} ( \nu^-_1 ,\nu^-_2 )^p \Big)^\frac{1}{p} .
\end{eqnarray}
\end{Definition}
Of course we recover Definition \ref{Wasserstein} in the case of positive measures.
We will use Definition \ref{WassersteinSigne} with $p=2$ in Section \ref{Younik}.

In the case where  $p=1$, we extend Definition \ref{Wasserstein} by setting 
\begin{Definition}[] 
\label{WassersteinSigne1}
 Given two compatible elements $\nu_1$, $\nu_2$ of $\mathcal{M}_{1}  ( \R^d) $, we define the Wasserstein distance $W_1 ( \nu_1 ,\nu_2 )$ between  these two measures by 
\begin{eqnarray}
W_{1} ( \nu_1 ,\nu_2 ) := W_{1} (  \nu_1^{+} +  \nu_2^{-}  ,  \nu_1^{-} + \nu_2^{+} ) .
\end{eqnarray}
\end{Definition}
Such a definition allows us to keep the well-known Kantorovitch duality :
\begin{Proposition}
\label{Kanto}
Let $\nu_1$ and $\nu_2$ be two compatible elements of $\mathcal{M}_{1}  ( \R^d) $.
Then 
\begin{eqnarray}
\label{NormS}
 W_{1} ( \nu_1 ,\nu_2 ) = \sup_{\phi }  \,   \int_{\R^d  } \phi  \,d( \nu_1 -  \nu_2),
\end{eqnarray}
where the supremum is taken over the unit ball of $\textnormal{Lip}(\R^d)$.
\end{Proposition}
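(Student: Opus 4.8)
The plan is to reduce the statement to the classical Kantorovich--Rubinstein duality for \emph{positive} measures of equal mass, which is precisely what Definition \ref{WassersteinSigne1} is engineered to exploit. First I would check that the quantity on the right-hand side of Definition \ref{WassersteinSigne1} is well defined. Set $\mu := \nu_1^+ + \nu_2^-$ and $\sigma := \nu_1^- + \nu_2^+$, both positive finite measures. Because $\nu_1$ and $\nu_2$ are compatible, one has $\nu_1^+(\R^d) = \nu_2^+(\R^d)$ and $\nu_1^-(\R^d) = \nu_2^-(\R^d)$, whence
\[
\mu(\R^d) = \nu_1^+(\R^d) + \nu_2^-(\R^d) = \nu_2^+(\R^d) + \nu_1^-(\R^d) = \sigma(\R^d),
\]
so that $\mu$ and $\sigma$ share the same total mass and $W_1(\mu,\sigma)$ is legitimately defined through Definition \ref{Wasserstein}. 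Moreover $\nu_1,\nu_2 \in \mathcal{M}_1(\R^d)$ forces $\mu,\sigma \in \mathcal{M}_1^+(\R^d)$, i.e. both carry a finite first moment.

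Next I would invoke the duality itself. Since $\mu$ and $\sigma$ are positive measures of equal mass with finite first moment, the Kantorovich--Rubinstein theorem (see \cite{Villani,AGS}) yields
\[
W_1(\mu,\sigma) = \sup_{\phi} \int_{\R^d} \phi \, d(\mu - \sigma),
\]
the supremum being taken over the unit ball of $\textnormal{Lip}(\R^d)$. The finite-moment hypothesis is exactly what guarantees that each integral converges, a $1$-Lipschitz function growing at most linearly while $\mu - \sigma \in \mathcal{M}_1(\R^d)$; and the equal-mass property, inherited from compatibility, makes the supremum insensitive to an additive constant in $\phi$, as it should be.

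It then remains to rewrite the integrand, which is pure bookkeeping on the Jordan--Hahn decomposition $\nu_i = \nu_i^+ - \nu_i^-$:
\[
\mu - \sigma = (\nu_1^+ + \nu_2^-) - (\nu_1^- + \nu_2^+) = (\nu_1^+ - \nu_1^-) - (\nu_2^+ - \nu_2^-) = \nu_1 - \nu_2.
\]
Combining this with Definition \ref{WassersteinSigne1} gives $W_1(\nu_1,\nu_2) = W_1(\mu,\sigma) = \sup_\phi \int_{\R^d} \phi \, d(\nu_1 - \nu_2)$, which is exactly \eqref{NormS}. The only genuine content sits in the second step, the classical duality, which I would simply cite; the point I would be most careful about is the equal-mass condition, needed both to make $W_1(\mu,\sigma)$ well posed and to legitimize dropping constants in the Lipschitz test functions, and this is precisely where the compatibility assumption is used.
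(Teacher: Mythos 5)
Your proposal is correct and follows exactly the paper's own argument: apply the classical Kantorovich--Rubinstein duality to the positive measures $\nu_1^+ + \nu_2^-$ and $\nu_1^- + \nu_2^+$ (well defined by compatibility) and observe that their difference equals $\nu_1 - \nu_2$. Your version merely spells out the equal-mass and finite-moment checks that the paper leaves implicit.
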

\begin{proof}
This formula is well-known for positive measures, see for instance \cite{AGS,Villani}. Moreover from Definition  \ref{WassersteinSigne1}, we infer that 
\begin{eqnarray*}
W_{1} ( \nu_1 ,\nu_2 ) &= & \sup_{\phi}   \,   \int_{\R^d  } \phi  \,d( ( \nu_1^{+} +  \nu_2^{-}  ) -  (  \nu_1^{-} + \nu_2^{+})) ,
\\  &=& \sup_{\phi}  \,   \int_{\R^d  } \phi\,  d( \nu_1  -    \nu_2 ) .
\end{eqnarray*}
\end{proof}
Let us give a few properties of the distance $W_{1} $. The first one is that it is really a distance: this comes easily from Proposition \ref{Kanto}.
For instance the positivity and the symmetry are quite obvious whereas the definiteness comes from Riesz theorem and the triangle inequality can be checked as follows: given three compatible signed measures $\nu_1 ,\nu_2  , \nu_3$ in  $\mathcal{M} ( \R^d) $, one has (the supremum is always taken over the unit ball of $\text{Lip}(\R^d)$)
\begin{eqnarray*}
W_{1} ( \nu_1 ,\nu_3 )  &=& \sup_{ \phi   }  \,   \int_{\R^d  } \phi\,  d( \nu_1 -  \nu_3) ,
\\  &=& \sup_{ \phi }  \,   \int_{\R^d  } \phi\,  d( \nu_1 -  \nu_2 + \nu_2 - \nu_3) ,
\\ &\leqslant&  \sup_{ \phi }  \,   \int_{\R^d  } \phi\,  d( \nu_1 -  \nu_2 )+  \sup_{ \phi }  \,   \int_{\R^d  } \phi\,  d \nu_2 - \nu_3) ,
\\  && = W_{1} ( \nu_1 ,\nu_2 )  + W_{1} ( \nu_2 ,\nu_3 )  .
\end{eqnarray*}
Note also that  the distance $W_{1} $ satisfies the following.
\begin{Lemma}
\label{4}
If $\nu_1 ,\nu_2$  and $\nu_3, \nu_4$ are two pairs of compatible elements of  $\mathcal{M}_{1}  ( \R^d) $, then the two measures $\nu_1+\nu_3$ and $\nu_2+\nu_4$ are compatible and we have
\begin{eqnarray*}
W_{1} ( \nu_1 +  \nu_3 ,  \nu_2 +  \nu_4)   \leqslant W_{1} ( \nu_1  ,  \nu_2 ) +  W_{1} (  \nu_3 ,    \nu_4)  ,
\end{eqnarray*}
with equality in the particular case where $ \nu_3 =  \nu_4$, what amounts to saying that the distance $W_{1} $ is translation invariant.
\end{Lemma}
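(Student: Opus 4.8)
The whole statement reduces to the Kantorovich duality formula \eqref{NormS} of Proposition \ref{Kanto}, which rewrites $W_1$ as a supremum of the linear functionals $\nu\mapsto\int\phi\,d\nu$ over the unit ball of $\textnormal{Lip}(\R^d)$. Once $W_1$ is linearized this way, the announced subadditivity becomes nothing more than the elementary fact that the supremum of a sum is at most the sum of the suprema, and the equality case becomes transparent. The plan is therefore: first to make sense of $W_1(\nu_1+\nu_3,\nu_2+\nu_4)$ and to secure the right to invoke \eqref{NormS} for this pair, then to apply the duality and split the integral.

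The preliminary step is the one I expect to require care, precisely because the Jordan--Hahn decomposition does not commute with sums: cancellations between $\nu_1$ and $\nu_3$ can alter the masses of the positive and negative parts, so the positive (resp. negative) part of $\nu_1+\nu_3$ need not be $\nu_1^++\nu_3^+$ (resp. $\nu_1^-+\nu_3^-$). I would sidestep this by working only with total masses. From the compatibility of $(\nu_1,\nu_2)$ and of $(\nu_3,\nu_4)$ one reads $\nu_1^\pm(\R^d)=\nu_2^\pm(\R^d)$ and $\nu_3^\pm(\R^d)=\nu_4^\pm(\R^d)$; subtracting within each pair yields the equality of total masses $\nu_1(\R^d)=\nu_2(\R^d)$ and $\nu_3(\R^d)=\nu_4(\R^d)$, whence by additivity $(\nu_1+\nu_3)(\R^d)=(\nu_2+\nu_4)(\R^d)$. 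This equal–total–mass condition is exactly what Definition \ref{WassersteinSigne1} needs in order to apply, and, inspecting the proof of Proposition \ref{Kanto}, it is all that the identity \eqref{NormS} uses, since that proof only relies on the two positive measures $\nu^++\nu'^-$ and $\nu^-+\nu'^+$ having the same total mass.

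With well-definedness in hand, I would apply Proposition \ref{Kanto} to $(\nu_1+\nu_3,\nu_2+\nu_4)$, use the linearity of $\phi\mapsto\int\phi\,d\nu$ together with the identity $(\nu_1+\nu_3)-(\nu_2+\nu_4)=(\nu_1-\nu_2)+(\nu_3-\nu_4)$, and then bound the supremum of the sum by the sum of suprema, the supremum always being over the unit ball of $\textnormal{Lip}(\R^d)$:
\begin{align*}
W_1(\nu_1+\nu_3,\nu_2+\nu_4) &= \sup_{\phi}\int_{\R^d}\phi\,d\big((\nu_1-\nu_2)+(\nu_3-\nu_4)\big) \\
&\leqslant \sup_{\phi}\int_{\R^d}\phi\,d(\nu_1-\nu_2)+\sup_{\phi}\int_{\R^d}\phi\,d(\nu_3-\nu_4).
\end{align*}
A last appeal to Proposition \ref{Kanto} for each pair identifies the right-hand side with $W_1(\nu_1,\nu_2)+W_1(\nu_3,\nu_4)$, which is the claimed bound.

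Finally, for the equality assertion I would simply specialise the computation above to $\nu_3=\nu_4$. Then $\nu_3-\nu_4=0$, so for every admissible $\phi$ the integrand reduces to $\phi\,d(\nu_1-\nu_2)$ and the supremum is unchanged, giving $W_1(\nu_1+\nu_3,\nu_2+\nu_3)=W_1(\nu_1,\nu_2)$ with no loss. This is exactly the invariance of $W_1$ under the addition of one and the same measure to both of its arguments, i.e. the stated translation invariance. The only genuine obstacle, as noted, is the bookkeeping of the Jordan--Hahn parts of the sums; everything else is an immediate consequence of the dual formula.
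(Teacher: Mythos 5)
Your proof is correct and follows the same route as the paper: Kantorovich duality (Proposition \ref{Kanto}), linearity of $\phi\mapsto\int\phi\,d\nu$, and the bound of a supremum of a sum by the sum of the suprema, with the equality case read off directly when $\nu_3=\nu_4$. Your care over the preliminary step is in fact well placed: the paper dismisses the compatibility of $\nu_1+\nu_3$ and $\nu_2+\nu_4$ as ``straightforward'', yet under the literal definition (equality of the masses of the positive parts and of the negative parts \emph{separately}) it can fail because of cancellations --- e.g. $\nu_1=\delta_0$, $\nu_2=\delta_1$, $\nu_3=-\delta_0$, $\nu_4=-\delta_2$ --- and your observation that only equality of total masses is required for Definition \ref{WassersteinSigne1} and Proposition \ref{Kanto} to apply is exactly the right repair.
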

\begin{proof}
The compatibility is straightforward. Thanks to Proposition \ref{Kanto} we have  (supremum is taken over the unit ball of $\text{Lip}(\R^d)$)
\begin{eqnarray*}
W_{1} ( \nu_1 +  \nu_3 ,  \nu_2 +  \nu_4) &=& \sup_{ \phi }  \,   \int_{\R^d  } \phi  \,d ( \nu_1 +  \nu_3 -  \nu_2 -  \nu_4) ,
\\ &=& \sup_{ \phi }  \,  \Big(  \int_{\R^d  } \phi\,  d ( \nu_1  -  \nu_2 )+  \int_{\R^d  } \phi  \,d (\nu_3  -  \nu_4)   \Big)
\\ &\leqslant& \sup_{  \phi }  \,  \int_{\R^d  } \phi\,  d ( \nu_1  -  \nu_2 )+ \sup_{ \phi }  \,  \int_{\R^d  } \phi\,  d (\nu_3  -  \nu_4)  ,
\\   &&= W_{1} ( \nu_1  ,  \nu_2 ) +  W_{1} (  \nu_3 ,    \nu_4)  .
\end{eqnarray*}
The case  where $ \nu_3 =  \nu_4$ is straightforward.
\end{proof}
\begin{Remark}
Lemma \ref{4} can also be seen as a consequence of the fact that the distance $W_{1}$ can be associated to a norm, see \cite{Hanin}.
\end{Remark}
\begin{Remark}
Applying Lemma \ref{4} with $( \nu_1^{+} ,\nu_2^{+} , \nu_2^{-} ,\nu_1^{-} )$ instead of $(\nu_1 ,\nu_2  , \nu_3 ,  \nu_4 )$ we have that 
$ W_{1} ( \nu_1 ,\nu_2 )   $ is less than $   W_{1} ( \nu_1^{+} ,\nu_2^{+} ) + W_{1} ( \nu_1^{-} ,\nu_2^{-} ) $,
which is the result given by the formula in Definition \ref{WassersteinSigne} for $p=1$.
\end{Remark}
\begin{Lemma}
\label{CompoLip}
Consider two compatible elements $\nu_1$, $\nu_2$ of $\mathcal{M}_{1}  ( \R^{d_1}) $. Let $\tau : \R^{d_1} \rightarrow  \R^{d_2} $ be a Lipschitz map. 
Then 
\begin{eqnarray*}
 W_{1} (  \tau_{\#} \nu_1 , \tau_{\#}  \nu_2 ) \leqslant \| \tau  \|_{\textnormal{Lip}}  W_{1} (\nu_1 ,\nu_2 )  .
\end{eqnarray*}
\end{Lemma}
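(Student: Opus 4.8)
The plan is to reduce everything to the Kantorovich duality of Proposition~\ref{Kanto}, after transporting test functions through $\tau$. First I would settle well-definedness. Since $\tau$ is Lipschitz one has $|\tau(x)|\le |\tau(0)|+\|\tau\|_{\textnormal{Lip}}|x|$, so for each $i$ the pushforward $\tau_{\#}\nu_i$ again belongs to $\mathcal{M}_1(\R^{d_2})$. Moreover the pushforward is linear, $\tau_{\#}\nu_i=\tau_{\#}\nu_i^{+}-\tau_{\#}\nu_i^{-}$ with $\tau_{\#}\nu_i^{\pm}$ finite and positive and $\tau_{\#}\nu_i^{\pm}(\R^{d_2})=\nu_i^{\pm}(\R^{d_1})$; compatibility of $\nu_1,\nu_2$ then forces $\tau_{\#}\nu_1(\R^{d_2})=\tau_{\#}\nu_2(\R^{d_2})$. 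This equality of total masses is all that the computation in the proof of Proposition~\ref{Kanto} uses, so that identity applies verbatim to the pair $(\tau_{\#}\nu_1,\tau_{\#}\nu_2)$ and gives $W_1(\tau_{\#}\nu_1,\tau_{\#}\nu_2)=\sup_{\phi}\int_{\R^{d_2}}\phi\,d(\tau_{\#}\nu_1-\tau_{\#}\nu_2)$, the supremum running over the unit ball of $\textnormal{Lip}(\R^{d_2})$.

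Next I would fix such a $\phi$ and use the transfer formula for pushforwards, namely $\int_{\R^{d_2}}\phi\,d\tau_{\#}\nu_i=\int_{\R^{d_1}}\phi\circ\tau\,d\nu_i$ (valid for positive measures by the very definition \eqref{PF}, and then for the signed $\nu_i$ by applying it to $\nu_i^{\pm}$ and subtracting). Subtracting the two identities yields $\int_{\R^{d_2}}\phi\,d(\tau_{\#}\nu_1-\tau_{\#}\nu_2)=\int_{\R^{d_1}}(\phi\circ\tau)\,d(\nu_1-\nu_2)$. The map $\phi\circ\tau$ is Lipschitz with $\|\phi\circ\tau\|_{\textnormal{Lip}}\le\|\phi\|_{\textnormal{Lip}}\|\tau\|_{\textnormal{Lip}}\le\|\tau\|_{\textnormal{Lip}}$; assuming $\|\tau\|_{\textnormal{Lip}}>0$, the rescaled function $\psi:=(\phi\circ\tau)/\|\tau\|_{\textnormal{Lip}}$ sits in the unit ball of $\textnormal{Lip}(\R^{d_1})$, so Proposition~\ref{Kanto} applied to $\nu_1,\nu_2$ gives $\int_{\R^{d_1}}(\phi\circ\tau)\,d(\nu_1-\nu_2)=\|\tau\|_{\textnormal{Lip}}\int_{\R^{d_1}}\psi\,d(\nu_1-\nu_2)\le\|\tau\|_{\textnormal{Lip}}\,W_1(\nu_1,\nu_2)$. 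Taking the supremum over $\phi$ yields the claimed inequality.

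The only genuinely delicate point is the one dealt with at the outset: when $\tau$ is not injective, cancellation can occur and the Jordan decomposition of $\tau_{\#}\nu_i$ need not coincide with $\tau_{\#}\nu_i^{+}-\tau_{\#}\nu_i^{-}$, so $\tau_{\#}\nu_1$ and $\tau_{\#}\nu_2$ need not be compatible in the strict sense of matching positive and negative masses. What survives---and all the argument requires---is equality of the total masses, which suffices to run the duality identity. The degenerate case $\|\tau\|_{\textnormal{Lip}}=0$, i.e. $\tau$ constant, is trivial since then $\tau_{\#}\nu_1=\tau_{\#}\nu_2$ and the left-hand side vanishes. The remaining steps (the transfer formula and the Lipschitz bound on $\phi\circ\tau$) are routine.
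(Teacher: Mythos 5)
Your proof is correct and follows essentially the same route as the paper's: Kantorovich duality for the pushforwards, the transfer formula $\int \phi \, d(\tau_{\#}\nu) = \int \phi\circ\tau \, d\nu$, and rescaling $\phi\circ\tau$ into the unit ball of $\textnormal{Lip}(\R^{d_1})$. The extra care you take about non-injective $\tau$ (the pushforwards need only have equal total masses for the duality identity to apply, since matching Jordan decompositions may be lost) and about the degenerate case $\|\tau\|_{\textnormal{Lip}}=0$ is a genuine refinement that the paper passes over silently, but the substance of the argument is the same.
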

\begin{proof}
Thanks to \eqref{NormS} we have 
\begin{eqnarray*}
 W_{1} (\tau_{\#} \nu_1 , \tau_{\#}  \nu_2 ) &=& \sup_{ \phi  } \,    \int_{\R^{d_2}  } \phi  \,d (\tau_{\#} \nu_1 - \tau_{\#}\nu_2 ) ,
 \\  &=& \sup_{ \phi  }  \,    \int_{\R^{d_2}  } \phi \circ \tau \,  d (\nu_1 - \nu_2 ) ,
 \\  &=&\|   \tau \|_{\text{Lip}} \,    \sup_{\phi }  \,    \int_{\R^{d_2}  } \frac{\phi \circ \tau}{\|   \tau \|_{\text{Lip}}}   \,d (\nu_1 - \nu_2 ) ,
 \\  &\leqslant  &  \|   \tau \|_{\text{Lip}} \,  \sup_{\psi }  \,   \int_{\R^d  } \psi\,  d   (\nu_1 - \nu_2 )  ,
  \\  &&  = \| \tau  \|_{\text{Lip}}   \,  W_{1} (\nu_1 ,\nu_2 ),
\end{eqnarray*}
where the index $\phi$ denotes a supremum over the unit ball of $\text{Lip}(\R^{d_2})$ and $\psi$ a supremum over the unit ball of $\text{Lip}(\R^{d_1})$.
\end{proof}
In the particular case where the map $\tau$ of the previous lemma is one of the projection maps defined above, we get the following result
\begin{Lemma}
\label{margi}
 Consider two compatible elements $\nu_1$, $\nu_2$ of $\mathcal{M}_{1} ( \R^d \times \R^d) $.
We have, for $i=1,2$, 
\begin{eqnarray*}
W_{1} ( \pi^i_{\#} \nu_{1} , \pi^i_{\#} \nu_{2} )  \leqslant W_{1} (  \nu_{1} ,\nu_{2} ) .
\end{eqnarray*}
\end{Lemma}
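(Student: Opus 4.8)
The plan is to recognize Lemma \ref{margi} as a direct specialization of Lemma \ref{CompoLip} to the projection maps $\pi^1, \pi^2 : \R^d \times \R^d \to \R^d$. The key observation is that each projection is Lipschitz with $\| \pi^i \|_{\textnormal{Lip}} = 1$. Writing a generic point of $\R^d \times \R^d = \R^{2d}$ as $z = (x,\xi)$ and using the Euclidean norm, one has for $i=1$
\begin{eqnarray*}
| \pi^1(x,\xi) - \pi^1(x',\xi') | = | x - x' | \leq \sqrt{ |x-x'|^2 + |\xi - \xi'|^2 } = | z - z' |,
\end{eqnarray*}
and symmetrically for $i=2$, so the Lipschitz seminorm of $\pi^i$ is at most $1$ (in fact equal to $1$, as one sees by taking $\xi = \xi'$).

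Once this is in hand, it only remains to apply Lemma \ref{CompoLip} with $\tau = \pi^i$, which gives
\begin{eqnarray*}
W_{1} ( \pi^i_{\#} \nu_{1} , \pi^i_{\#} \nu_{2} ) \leq \| \pi^i \|_{\textnormal{Lip}} \, W_{1} (\nu_{1} ,\nu_{2} ) = W_{1} (\nu_{1} ,\nu_{2} ),
\end{eqnarray*}
which is exactly the claimed inequality. Before invoking Lemma \ref{CompoLip}, I would first check that the marginals $\pi^i_{\#} \nu_1$ and $\pi^i_{\#} \nu_2$ are themselves compatible, so that the left-hand side is well defined in the sense of Definition \ref{WassersteinSigne1}. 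Since pushforward by any measurable map preserves total mass, this is immediate in the case of positive measures (which is the situation of interest, as the kinetic density $f$ is nonnegative): both marginals are then positive measures sharing the common total mass of $\nu_1$ and $\nu_2$.

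The only genuinely delicate point is this compatibility of the pushforward marginals, because the pushforward of a signed measure need not commute with the Jordan--Hahn decomposition, so for general signed $\nu_1, \nu_2$ the projected positive and negative parts may fail to match masses. I expect this to be the main (and essentially only) subtlety, and I would dispose of it by restricting attention to the positive-measure setting in which the lemma is used, where the issue evaporates; everything else follows mechanically from Lemma \ref{CompoLip}.
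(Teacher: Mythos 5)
Your proposal is correct and is exactly the paper's approach: the paper gives no separate proof of Lemma \ref{margi}, presenting it precisely as the specialization of Lemma \ref{CompoLip} to the projection maps, which are $1$-Lipschitz for the Euclidean norm on $\R^d\times\R^d$. Your extra remark on the compatibility of the pushforward marginals is a legitimate precaution that the paper leaves implicit; it is indeed harmless in the nonnegative setting where the lemma is applied (to $\rho[f_i]=\pi^1_\# f_i$ with $f_i$ probability measures), and more generally the quantity $W_1(\pi^i_\#\nu_1,\pi^i_\#\nu_2)$ of Definition \ref{WassersteinSigne1} remains meaningful because pushforward preserves the total signed mass.
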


 \subsection{Wasserstein distances for vector measures}
 
As we will deal with the coupled system \eqref{VE1}-\eqref{VE2} for which there are two unknowns: $\omega$ and $f$, we will use the following extension of Definition \ref{WassersteinSigne}: 
\begin{Definition}[] 
\label{WassersteinSigneDistance}
Let  $d_{1}$ and  $d_{2}$ be two positive integers. Consider two pairs $\nu_1$, $\nu_2$  and $\sigma_1$, $\sigma_2$ of compatible measures (respectively in $\mathcal{M}_{1} ( \R^{d_{1}}) $ and $\mathcal{M}_{1} ( \R^{d_{2}}) $). We introduce the couples $\mu_i:=(\nu_i,\sigma_i)\in\mathcal{M}_1(\R^{d_1})\times\mathcal{M}_1(\R^{d_2})$, $i=1,2$ and define the associated Wasserstein distance $W_1 ( \mu_1 ,\mu_2 )$ between $\mu_1$ and $\mu_2$  by 
\begin{eqnarray*}
W_{1} ( \mu_1 ,\mu_2) := W_{1} ( \nu_1  , \nu_2 ) + W_{1} ( \sigma_1  , \sigma_2 )  .
\end{eqnarray*}
\end{Definition}

\section{Derivation of the system as a mean-field model}
\label{MEF}

\subsection{Massive vortex-wave system}

In \cite{smallbody} we derive the following system for the motion of one massive vortex of mass $m >0$ and circulation  $\gamma \in \R$ in a perfect incompressible flow. Recalling the notation
\begin{align*}
H(x) := \frac{1}{2\pi} \frac{x^\perp}{|x|^2},
\end{align*}
the system reads 
\begin{gather}
\label{EulerPoint} 
\partial_{t} \omega  +  \div_x (  \omega  u  ) = 0 , \\
\label{EulerU} 
u(t,x) := K \lbrack \omega \rbrack (t,x) +  \gamma H(x-h(t)) \\
\label{PointEuler}
m h''(t) = \gamma \Big(h'(t) -  K \lbrack \omega \rbrack (t,h(t))\Big)^\perp, \\
\label{EP0}
\omega |_{t= 0}=  \omega_0 ,\ h(0) = h_0, \ h' (0) = h_1 , \\
\end{gather}
This system was obtained by considering  the motion of a solid body in a two dimensional incompressible perfect fluid, when the body shrinks to a pointwise particle at the  position $h(t)$ with a fixed mass $m>0$ and a fixed circulation $\gamma \in \R$ around the body.
Equation (\ref{EulerPoint}) describes the evolution of the vorticity $\omega $ of the fluid: it is transported by the velocity $u$ obtained by the usual Biot-Savart law in the plane, but from a vorticity which is the sum of the fluid vorticity and of a point vortex placed at  $h(t)$  with a strength equal to the circulation $\gamma$. Observe for instance that the  velocity $u$ is divergence free and can be written as 
$u =  K \lbrack \omega +   \gamma    \delta_{h(t)}  \rbrack $. \par
Equation (\ref{PointEuler}) means that the shrunk body is accelerated by a  force similar to the Kutta-Joukowski lift of the irrotational theory: the shrunk body experiments a lift which is proportional to the circulation $\gamma$ and to the difference between the solid velocity and the virtual fluid velocity  obtained by the Biot-Savart law in the plane from the fluid vorticity, up to a rotation of a $\pi/2$ angle. Let us refer here to the textbooks of Childress \cite{Childress} or Marchioro and Pulvirenti \cite{MP} for a discussion of the Kutta-Joukowski force. See also Grotta-Ragazzo, Koiller and Oliva \cite{ragazzo}, where they consider a similar system of a point mass embedded in an irrotational fluid and driven by the Kutta-Joukowski force. \par

The system (\ref{EulerPoint})-(\ref{EP0})  can also be seen as a variant of the vortex-wave system introduced by Marchioro and Pulvirenti cf. for instance Th. $6.2$ of \cite{MP} and \cite{MP2}, and recently studied in \cite{criline} and in \cite{cb}.
Actually the  system (\ref{EulerPoint})-(\ref{EP0})   reduces to the vortex-wave system when the mass $m$ is set to $0$.

\subsection{$N$ particles }

Let us now generalize the previous system to the case of  $N$ pointwise particles of mass $m_i$, of circulation $\gamma_i $ and of position $h_i (t)$, for $i=1,...,N$, moving into a perfect and incompressible planar fluid:
\begin{eqnarray}
\label{N1}  \partial_t \omega +  \div_x (   \omega u)  = 0 ,
   \\  \label{N1u} u(t,x) = K \lbrack \omega \rbrack(t,x) + \sum_{j = 1}^N \gamma_j H (x  - h_j(t) )  , 
\\  \label{N2} m_i  h_i '' (t) = \gamma_i  \Big( h_i '(t) -   v_{i} (t, h_i (t)  ) \Big)^\perp , 
 \\  \label{N2u} v_{i}(t,x) = K \lbrack \omega \rbrack (t,x) +  \sum_{j \neq i} \gamma_j H ( x - h_j (t) ) ,
\\	\label{N5}	 \omega |_{t= 0} =  \omega_0 ,\   h_i  (0)    = h_{i, 0} ,\	 h'_i  (0)   = h_{i, 1} .
\end{eqnarray}
Let us observe that in \eqref{N2u} the self-interaction is omitted since the index in the sum runs only over $j \neq i$. 
The derivation of the system \eqref{N1}-\eqref{N5} from the motion of $N$  solid bodies  in a two dimensional incompressible perfect fluid, when the bodies shrink to  pointwise particles, is the object of a paper in preparation.

Once again if one sets all the masses $m_i$ equal to $0$ one recovers the vortex-wave system  of Marchioro and Pulvirenti.

\subsection{Mean-field limit }
\label{M-fl}
We want to study the mean-field limit of the system \eqref{N1}-\eqref{N5}, that is the limit system obtained by the empirical measure
\begin{eqnarray*}
f_N (t) := \frac{1}{N} \sum_{i=1}^{N } \delta_{(h_i (t) , h'_i (t))} 
\end{eqnarray*}
when $N$ goes to infinity, with an appropriate scaling of the amplitudes.
We therefore consider now the solutions of 
\begin{eqnarray*}
 \partial_t \omega +  \div_x (   \omega u) = 0 , 
 \\   u(t,x) = K \lbrack \omega \rbrack(t,x) + \frac{1}{N} \sum_{j = 1}^N  H (x  - h_j (t) )  , 
\\    h_i '' (t) =   \Big( h_i '(t) -  v_{i} (t, h_i (t)  )  \Big)^\perp   , 
 \\   v_{i}(t,x) = K \lbrack \omega \rbrack(t,x)+ \frac{1}{N}  \sum_{j \neq i} H ( x - h_j (t) ) ,
\\	 \omega |_{t= 0} =  \omega_0 ,\   h_i  (0)    = h_{i, 0} ,\	 h'_i  (0)   = h_{i, 1} .
\end{eqnarray*}
\subsection{A regularized version of the system }
Such an issue is quite similar to the one which consists to obtain the Vlasov-Poisson system as a mean-field limit of  Newton's equations for charged particles.
This last problem is still open at the time of writing,%
\footnote{In the case of the Vlasov-Poisson system, Hauray and Jabin have recently succeed to improve the previously known results about the mean-field limit to some cases where the interaction kernel can be singular cf.  \cite{HJ}. Yet their approach does not cover the case of the Newtonian kernel appearing in \eqref{VE3}.} but some results are available in the  simpler setting where $H$ is assumed to be $W^{1,\infty}$, in the spirit of the famous paper   \cite{Dobrushin} by Dobrushin  (let us also mention here   Braun and Hepp \cite{braun} and  Neunzert \cite{neunzert}).
We will therefore consider the equations \eqref{VE1}-\eqref{VE2} where $u$ is given by 
\begin{eqnarray}
\label{VE3R} u := \widetilde{K}  \lbrack \omega + \rho  \rbrack  \text{ and } \rho :=  \int_{\R^2  } f d\xi ,
\end{eqnarray}
where $ \widetilde{K}$ is defined by
\begin{eqnarray*}
 \widetilde{K} \lbrack g \rbrack (x) :=  \int_{ \R^{2}}  \widetilde{H} (x-y)  g (y) dy , 
\end{eqnarray*}
with $ \widetilde{H}$ is in $W^{1,\infty} ( \R^2 )$ and satisfies $ \widetilde{H}(0)=0$.
\begin{Theorem} 
\label{MFR}
Assume $\widetilde{H}\in W^{1,\infty}(\R^2)$, satisfying $\widetilde{H}(0)=0$ and $\widetilde{K}$ defined as above. We have the following.
\begin{enumerate}[(a)]
\item Assume that $( \omega_{0} , f_{0}) $ is in $\mathcal{M}_1 ( \R^2 ) \times \mathcal{P}_1 ( \R^2 \times  \R^2 ) $.  
Then there exists only one  weakly continuous curve $(\omega_t , f_t )$ in $\mathscr{C}^0([0,\infty) ;  \mathcal{M}_1 ( \R^2) \times \mathcal{P}_1 ( \R^2 \times  \R^2 ) )$ solution of the system   \eqref{VE1}-\eqref{VE2}-\eqref{VE3R} with $( \omega_{0} , f_{0}) $ as initial data. 
\item Moreover we have the following stability property.
Consider two solutions  $  \mu_1 := ( \omega_1 , f_1) $ and $\mu_2 :=  ( \omega_2 , f_2) $  of the system   \eqref{VE1}-\eqref{VE2}-\eqref{VE3R} associated to two initial data  $ \mu_{0}^1  := ( \omega_{0}^1 , f_{0}^1) $ and $ \mu_{0}^2 := ( \omega_{0}^2 , f_{0}^2) $ in $\mathcal{M}_1 ( \R^2 ) \times \mathcal{P}_1 ( \R^2 \times  \R^2 ) $. Then, for any $t\geq 0$,
\begin{eqnarray}
\label{stab}
W_{1} (  \mu_1 (t) ,\mu_2 (t) ) \leq e^{2Ct} \, W_{1} (  \mu_{0}^1 ,\mu_{0}^2 ) ,
\end{eqnarray}
where $C >0$ depends only on $ \|  \widetilde{H}  \|_{\textnormal{Lip}}$ and $|\omega_0|(\R^2)$.
\item  Finally if we assume that $( \omega_{0} , f_{0}) $ is also in $\Lip ( \R^2) \times \Lip ( \R^2  \times  \R^2 ) $, then the corresponding solution $(\omega_t , f_t )$ is in $L^\infty_{\textnormal{loc}} ([0,\infty) ;  \Lip ( \R^2  ) \times \Lip ( \R^2 \times  \R^2)  )$
\end{enumerate}
\end{Theorem}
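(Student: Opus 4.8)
The plan is to solve \eqref{VE1}-\eqref{VE2}-\eqref{VE3R} by the method of characteristics, treating all three assertions through one family of a priori bounds on the characteristic flows, in the spirit of Dobrushin. The key observation is that, since $\widetilde H\in W^{1,\infty}$, whenever $|\omega_t|(\R^2)$ stays bounded the velocity $u=\widetilde K[\omega+\rho]$ is globally Lipschitz in $x$: writing $u(t,x)-u(t,x')=\int(\widetilde H(x-y)-\widetilde H(x'-y))\,d(\omega_t+\rho_t)(y)$ gives $\|u(t)\|_{\Lip}\le\|\widetilde H\|_{\Lip}\,|\omega_t+\rho_t|(\R^2)$, while $\widetilde H(0)=0$ yields the linear growth bound $|u(t,x)|\le\|\widetilde H\|_{\Lip}\big(|\omega_t+\rho_t|(\R^2)\,|x|+\int|y|\,d|\omega_t+\rho_t|(y)\big)$. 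First I would record that, along any solution, the total variation $|\omega_t|(\R^2)$ and the mass $\rho_t(\R^2)=1$ are conserved (each being transported by the flow, which preserves the Jordan decomposition), so that $\|u(t)\|_{\Lip}\le C:=\|\widetilde H\|_{\Lip}(|\omega_0|(\R^2)+1)$ uniformly in time. The linear growth of $u$ and of the phase velocity $V(t,x,\xi):=(\xi,(\xi-u(t,x))^\perp)$ then propagate first moments by Gronwall and make the two characteristic systems $\dot X=u(t,X)$ and $(\dot X,\dot\Xi)=V(t,X,\Xi)$ generate global bi-Lipschitz flows $\Phi_t$ on $\R^2$ and $Z_t$ on $\R^2\times\R^2$, with $\|\Phi_t\|_{\Lip}\le e^{Ct}$ and $\|Z_t\|_{\Lip}\le e^{(C+2)t}$. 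A solution is then exactly a curve for which $\omega_t=(\Phi_t)_\#\omega_0$ and $f_t=(Z_t)_\#f_0$, with $u,\Phi_t,Z_t$ built from $(\omega_t,f_t)$ itself.

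For existence I would set up a Banach fixed point on $\mathscr C^0([0,T];\mathcal M_1(\R^2)\times\mathcal P_1(\R^2\times\R^2))$ equipped with $\sup_{t\le T}W_1(\cdot,\cdot)$ (Definition \ref{WassersteinSigneDistance}), restricted to the compatibility class and mass level fixed by the initial datum. The map $\mathcal T$ sends $\mu=(\omega,f)$ to $((\Phi_t)_\#\omega_0,(Z_t)_\#f_0)$, where $\Phi_t,Z_t$ are the flows of $u=\widetilde K[\omega+\rho]$ read off from $\mu$. The uniform bounds above show $\mathcal T$ is well defined and valued in the same space (moment propagation, plus $W_1$-continuity in time from the continuity of the flows). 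For the contraction I compare $\mathcal T\mu^1,\mathcal T\mu^2$ for two inputs sharing the initial datum: since both flow families start from the identity, their difference is governed by $\int_0^t\|u^1-u^2\|_{L^\infty}\,ds\le\|\widetilde H\|_{\Lip}\int_0^tW_1(\mu^1_s,\mu^2_s)\,ds$ (by the estimate below), giving $\sup_{t\le T}W_1(\mathcal T\mu^1,\mathcal T\mu^2)\le C'Te^{C'T}\sup_{t\le T}W_1(\mu^1,\mu^2)$, a contraction for $T$ small. Because the Lipschitz bound on $u$ is uniform in time, $T$ is independent of the starting time and the local solution is continued to $[0,\infty)$.

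For the stability \eqref{stab} I would bound each component of $D(t):=W_1(\omega^1_t,\omega^2_t)+W_1(f^1_t,f^2_t)$ via the Kantorovich duality \eqref{NormS}. For the vorticity, using $\omega^i_t=(\Phi^i_t)_\#\omega^i_0$, the triangle inequality and Lemma \ref{CompoLip},
\[
W_1(\omega^1_t,\omega^2_t)\le\|\Phi^1_t\|_{\Lip}\,W_1(\omega^1_0,\omega^2_0)+|\omega^2_0|(\R^2)\,\|\Phi^1_t-\Phi^2_t\|_{L^\infty},
\]
the last term coming from $\sup_\phi\int(\phi\circ\Phi^1_t-\phi\circ\Phi^2_t)\,d\omega^2_0$ over the unit ball of $\Lip(\R^2)$, and an analogous inequality holds for $f$ (here $f^2_0$ has mass one). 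The crucial coupling estimate $\|u^1(t)-u^2(t)\|_{L^\infty}\le\|\widetilde H\|_{\Lip}\big(W_1(\omega^1_t,\omega^2_t)+W_1(\rho^1_t,\rho^2_t)\big)\le\|\widetilde H\|_{\Lip}D(t)$ follows from \eqref{NormS} applied to the translated kernel $\widetilde H(x-\cdot)$, together with Lemma \ref{4} and the marginal Lemma \ref{margi} (as $\rho_t=\pi^1_\#f_t$). Feeding this into the Gronwall inequalities for the flow differences $\|\Phi^1_t-\Phi^2_t\|_{L^\infty},\|Z^1_t-Z^2_t\|_{L^\infty}$, which vanish at $t=0$, closes a single Gronwall inequality $D(t)\le e^{(C+2)t}D(0)+M\|\widetilde H\|_{\Lip}\int_0^te^{(C+2)(t-s)}D(s)\,ds$ with $M=|\omega_0|(\R^2)+1$, whence \eqref{stab}. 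Uniqueness is the case $D(0)=0$, completing part (a).

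For the regularity (c) I would exploit that both flows are measure preserving. The phase velocity is divergence free in $(x,\xi)$ since $\div_x\xi=0$ and $\div_\xi\big((\xi-u(t,x))^\perp\big)=0$, so $Z_t$ preserves Lebesgue measure on $\R^4$ and $f_t=f_0\circ Z_t^{-1}$; since $\widetilde K[\,\cdot\,]$ is divergence free (inherited from the regularized Biot–Savart structure), $\Phi_t$ preserves Lebesgue measure on $\R^2$ and $\omega_t=\omega_0\circ\Phi_t^{-1}$. Lipschitz regularity then propagates with $\|\omega_t\|_{\Lip}\le e^{Ct}\|\omega_0\|_{\Lip}$ and $\|f_t\|_{\Lip}\le e^{(C+2)t}\|f_0\|_{\Lip}$, using the bi-Lipschitz bounds on the inverse flows. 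The main obstacle is the coupled, sign-indefinite nature of the problem: one cannot estimate $W_1(\omega^1_t,\omega^2_t)$ by transporting a single optimal plan, because $\omega$ changes sign and $u^1\neq u^2$, so the argument must be organized around the dual formulation \eqref{NormS}, the interdependence between the flow differences and $D(t)$ being resolved only by closing the two Gronwall inequalities simultaneously; a secondary point, on which (c) relies, is to confirm that $\widetilde K[\,\cdot\,]$ is indeed divergence free, so that $\Phi_t$ is area preserving.
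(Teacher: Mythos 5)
Your proposal is correct and follows essentially the same Dobrushin-type route as the paper: a Banach fixed point for the pushforward map $\mu\mapsto({\Phi_t}_\#\omega_0,{Z_t}_\#f_0)$ on $\mathscr C^0([0,T];\mathcal M_1\times\mathcal P_1)$ with the sup-in-time $W_1$ distance, the coupling estimate $\|u^1-u^2\|_{L^\infty}\le\|\widetilde H\|_{\Lip}W_1(\mu^1,\mu^2)$ via Kantorovich duality, the same triangle-inequality splitting plus Gronwall for stability, and global continuation from conservation of $|\omega_0|(\R^2)$. The only place you go beyond the paper is part (c), which the paper dispatches with ``basic transport theory''; your measure-preservation argument there is fine for the intended regularizations $\widetilde H=H\star\varphi_n$ (which are divergence free), though, as you yourself flag, the bare hypotheses $\widetilde H\in W^{1,\infty}$, $\widetilde H(0)=0$ do not by themselves guarantee that $\widetilde K[\,\cdot\,]$ is divergence free.
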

Above $W_{1}$ is the Wasserstein distance defined in Definition \ref{WassersteinSigneDistance}.
In the proof of Theorem \ref{MFR} we will use the following notation: when $\nu$ is in  $\mathcal{M}_1 ( \R^2)$, we denote  $U  \lbrack \nu \rbrack $ the vector field defined for 
$  (x,\xi ) \in \R^2 \times  \R^2$ by 
\begin{equation*}
U   \lbrack \nu \rbrack  (x,\xi ) := \Big(\xi , (\xi - \widetilde{K} \lbrack \nu \rbrack (x) )^{\perp} \Big)  \in \R^2 \times  \R^2 . 
\end{equation*}
We will use the two following lemmata. 
\begin{Lemma}
\label{LipLip}
Let  $\nu$ be in  $\mathcal{M}_1 ( \R^2)$. 
Then  the vector fields  $ \widetilde{K} \lbrack \nu \rbrack $ and $U  \lbrack \nu \rbrack $  are Lipschitz and 
\begin{eqnarray*}
\|  \widetilde{K} \lbrack \nu \rbrack  \|_{\Lip} \leqslant |\nu|(\R^2)  \|  \widetilde{H}  \|_{\Lip}    \text{ and } \| U  \lbrack \nu \rbrack   \|_{\Lip} \leqslant   |\nu|(\R^2)\max ( \|  \widetilde{H}  \|_{\Lip } ,1)  .
\end{eqnarray*}
Moreover $ \widetilde{K} \lbrack \nu \rbrack $ is uniformly bounded
\begin{eqnarray}
\label{bdkernel}
\|  \widetilde{K} \lbrack \nu \rbrack  \|_{L^\infty} \leqslant  \,   |\nu|(\R^2)  \|  \widetilde{H}  \|_{L^\infty}  .
\end{eqnarray}
\end{Lemma}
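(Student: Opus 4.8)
The plan is to prove the three estimates in turn, each by bringing the absolute value inside the defining integral and exploiting the relevant norm of $\widetilde{H}$. The only input actually used is that $\nu$ has finite total variation $|\nu|(\R^2)<\infty$, which guarantees that $\widetilde{K}[\nu]$ is well defined (recall that an element of $W^{1,\infty}(\R^2)$ is bounded and admits a Lipschitz representative, so $\widetilde{H}(x-\cdot)$ is bounded continuous and integrable against the finite measure $\nu$) and that all the integrals below converge; the finite first moment plays no role here.

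First I would establish the $L^\infty$ bound \eqref{bdkernel}. For every $x\in\R^2$, using $\big|\int\,\cdot\,d\nu\big|\le\int|\,\cdot\,|\,d|\nu|$ for the total variation measure,
\begin{equation*}
| \widetilde{K}[\nu](x) | = \Big| \int_{\R^2} \widetilde{H}(x-y)\, d\nu(y) \Big| \le \int_{\R^2} |\widetilde{H}(x-y)|\, d|\nu|(y) \le \|\widetilde{H}\|_{L^\infty}\, |\nu|(\R^2),
\end{equation*}
and taking the supremum over $x$ gives \eqref{bdkernel}. Next, for the Lipschitz bound on $\widetilde{K}[\nu]$ I would write the difference under a single integral and use that $\widetilde{H}$ is Lipschitz:
\begin{equation*}
|\widetilde{K}[\nu](x)-\widetilde{K}[\nu](x')| \le \int_{\R^2} |\widetilde{H}(x-y)-\widetilde{H}(x'-y)|\, d|\nu|(y) \le \|\widetilde{H}\|_{\Lip}\,|x-x'|\,|\nu|(\R^2),
\end{equation*}
which is the first claimed inequality.

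Finally, for $U[\nu]$ I would combine the structure of the field with the two facts just proved. Writing $P=(x,\xi)$, $P'=(x',\xi')$ and using that $v\mapsto v^\perp$ is a linear isometry of $\R^2$, so that the perp factors out of the second component, the two components give
\begin{equation*}
|U[\nu](P)-U[\nu](P')|^2 = |\xi-\xi'|^2 + \big|(\xi-\xi')-(\widetilde{K}[\nu](x)-\widetilde{K}[\nu](x'))\big|^2 .
\end{equation*}
By the triangle inequality and the Lipschitz bound for $\widetilde{K}[\nu]$ (with $L:=\|\widetilde{K}[\nu]\|_{\Lip}\le |\nu|(\R^2)\|\widetilde{H}\|_{\Lip}$), the right-hand side is controlled by $\big(|x-x'|^2+|\xi-\xi'|^2\big)$ times a constant governed by $\max(\|\widetilde{H}\|_{\Lip},1)\,|\nu|(\R^2)$.

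The first two steps are entirely routine; the only point requiring a little care is the last one, where $\xi$ enters \emph{both} slots of $U[\nu]$, so the combined Lipschitz constant cannot be read off from either component alone but must be taken from the block Jacobian $\partial_{(x,\xi)}U[\nu] = \begin{pmatrix} 0 & \Id \\ -(D_x\widetilde{K}[\nu])^\perp & R\end{pmatrix}$, with $R$ the rotation by $\pi/2$. It is exactly this coupling of the $\xi$-variable that produces the $\max(\,\cdot\,,1)$ in the stated constant, and keeping track of it (rather than any genuine difficulty) is the main thing to watch.
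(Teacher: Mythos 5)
Your argument follows the only natural route, which is exactly what the paper intends when it writes that the lemma ``follows easily from convolution properties'': pull the absolute value inside the integral against $|\nu|$ and use $\|\widetilde{H}\|_{L^\infty}$, respectively $\|\widetilde{H}\|_{\Lip}$. The $L^\infty$ bound and the Lipschitz bound for $\widetilde{K}[\nu]$ are complete and correct as written.

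The one place where you stop short is the last step, and it is worth being honest about what a careful computation there actually gives. You correctly reduce to
\begin{equation*}
|U[\nu](P)-U[\nu](P')|^2 = |\xi-\xi'|^2 + \big|(\xi-\xi')-(\widetilde{K}[\nu](x)-\widetilde{K}[\nu](x'))\big|^2 \leqslant 2|\xi-\xi'|^2+2L|\xi-\xi'||x-x'|+L^2|x-x'|^2
\end{equation*}
with $L=\|\widetilde{K}[\nu]\|_{\Lip}$, but optimizing this quadratic form over the Euclidean norm of $(x-x',\xi-\xi')$ does \emph{not} return the constant $|\nu|(\R^2)\max(\|\widetilde{H}\|_{\Lip},1)$ claimed in the statement: already for $\nu=0$ one has $U[0](x,\xi)=(\xi,\xi^\perp)$, whose Lipschitz constant is $\sqrt{2}$, whereas the stated bound would be $0$. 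So the assertion that the block structure ``produces the $\max(\cdot,1)$ in the stated constant'' is not quite right; what one actually obtains is an estimate of the form $\|U[\nu]\|_{\Lip}\leqslant C\max\big(|\nu|(\R^2)\|\widetilde{H}\|_{\Lip},1\big)$ for a universal constant $C$ (or the stated bound after replacing the Euclidean norm on $\R^2\times\R^2$ by an equivalent one). This discrepancy is an imprecision of the lemma's statement as much as of your proof, and it is harmless for everything that follows, since the bound only enters the Gronwall-type estimate \eqref{Lip0} through an exponential where a universal multiplicative constant changes nothing; but since you explicitly claim to recover the exact constant, you should either carry out the optimization and state the corrected constant, or note that the inequality holds up to a universal factor.
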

\begin{proof}
This follows easily from convolution properties, referring to the definition of  $\widetilde{K} \lbrack \cdot \rbrack$ above.
\end{proof}
\begin{Lemma}
\label{compareU}
Let  $\nu_1$ and  $\nu_2$ be in  $\mathcal{M}_1 ( \R^2)$.
Then
\begin{eqnarray*}
\|  U \lbrack  \nu_1  \rbrack  - U  \lbrack  \nu_2  \rbrack   \|_{L^{\infty} } =  \|  \widetilde{K} \lbrack \nu_{1} \rbrack -  \widetilde{K} \lbrack \nu_{2} \rbrack  \|_{L^{\infty}}
 \leqslant  \|  \widetilde{H}  \|_{\Lip} \, W_{1 } (\nu_1 ,\nu_2 )  .
\end{eqnarray*}
\end{Lemma}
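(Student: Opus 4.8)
The plan is to treat the two assertions separately: first the identity between the two $L^\infty$ norms, then the quantitative bound in terms of $W_1$.

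For the identity I would simply unwind the definition of $U[\nu]$. Writing out $U[\nu_1](x,\xi) - U[\nu_2](x,\xi)$, the first $\R^2$-component is $\xi - \xi = 0$, while the second is $(\xi - \widetilde{K}[\nu_1](x))^{\perp} - (\xi - \widetilde{K}[\nu_2](x))^{\perp} = (\widetilde{K}[\nu_2](x) - \widetilde{K}[\nu_1](x))^{\perp}$. Since the rotation $v \mapsto v^{\perp}$ is a Euclidean isometry, I get $|U[\nu_1](x,\xi) - U[\nu_2](x,\xi)| = |\widetilde{K}[\nu_1](x) - \widetilde{K}[\nu_2](x)|$ for every $(x,\xi)$, and taking the supremum yields the claimed equality. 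Note that the right-hand side no longer depends on $\xi$, so the two $L^\infty$ norms (one over $\R^2\times\R^2$, one over $\R^2$) agree.

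For the inequality I would start from the integral representation $\widetilde{K}[\nu_1](x) - \widetilde{K}[\nu_2](x) = \int_{\R^2} \widetilde{H}(x-y)\, d(\nu_1 - \nu_2)(y)$, valid for each fixed $x$ (the measures being tacitly assumed compatible, so that $W_1(\nu_1,\nu_2)$ is defined). The one point requiring care is that $\widetilde{K}$ is \emph{vector}-valued, whereas the Kantorovich duality of Proposition \ref{Kanto} is stated for \emph{scalar} Lipschitz test functions. I would circumvent this by pairing with an arbitrary unit vector $e \in \R^2$: setting $\phi_{x,e}(y) := e \cdot \widetilde{H}(x-y)$, one has $e \cdot (\widetilde{K}[\nu_1](x) - \widetilde{K}[\nu_2](x)) = \int_{\R^2} \phi_{x,e}\, d(\nu_1 - \nu_2)$.

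It then remains to observe that $\phi_{x,e}$ is scalar and Lipschitz with $\|\phi_{x,e}\|_{\textnormal{Lip}} \leq \|\widetilde{H}\|_{\textnormal{Lip}}$ (via Cauchy--Schwarz and $|e|=1$), so that $\phi_{x,e}/\|\widetilde{H}\|_{\textnormal{Lip}}$ lies in the unit ball of $\textnormal{Lip}(\R^2)$; moreover $\phi_{x,e}$ is bounded since $\widetilde{H}\in W^{1,\infty}$, so the pairing against $\nu_1 - \nu_2 \in \mathcal{M}_1(\R^2)$ is well defined. Applying Proposition \ref{Kanto} gives $e \cdot (\widetilde{K}[\nu_1](x) - \widetilde{K}[\nu_2](x)) \leq \|\widetilde{H}\|_{\textnormal{Lip}}\, W_1(\nu_1, \nu_2)$. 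Taking the supremum over unit vectors $e$ turns the left-hand side into $|\widetilde{K}[\nu_1](x) - \widetilde{K}[\nu_2](x)|$, and a final supremum over $x \in \R^2$ yields the desired estimate. The computation is elementary throughout; the only genuine subtlety, and the step I would be most careful about, is precisely this reduction from the vector field $\widetilde{K}$ to scalar test functions via the directional pairing, which is what lets me invoke the scalar duality formula \eqref{NormS}.
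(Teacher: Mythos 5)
Your proof is correct and follows essentially the same route as the paper: the $\xi$-components cancel, the rotation $v\mapsto v^{\perp}$ is a Euclidean isometry, and the bound follows from the Kantorovich duality of Proposition \ref{Kanto} applied to $\widetilde{H}$ rescaled by its Lipschitz norm. Your extra step of pairing with a unit vector $e$ to reduce the vector-valued kernel to scalar test functions is a welcome clarification of a point that the paper's one-line appeal to Proposition \ref{Kanto} leaves implicit.
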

\begin{proof}
We have 
\begin{eqnarray*}
  U \lbrack  \nu_1  \rbrack - U \lbrack  \nu_2  \rbrack  =   \Big(0, - (  \widetilde{K} \lbrack \nu_{1}-  \nu_{2} \rbrack (x) )^{\perp} \Big) ,
\end{eqnarray*}
so that 
\begin{eqnarray*}
|  U \lbrack  \nu_1  \rbrack - U \lbrack  \nu_2  \rbrack|
 &=&  |  \widetilde{K} \lbrack \nu_{1}-  \nu_{2} \rbrack |
\\ &=&  |\widetilde{H}\star (\nu_1-\nu_2)|
\\ &=&  \|  \widetilde{H}  \|_{\Lip} \left|\frac{\widetilde{H}}{\|\widetilde{H}\|_\Lip}\star(\nu_1-\nu_2)\right|.
\end{eqnarray*}
It then remains to use Proposition  \ref{Kanto} to conclude the proof of the Lemma.
\end{proof}
\begin{proof}[Proof of Theorem \ref{MFR}]
Denote $\mu_0:=( \omega_{0} , f_{0}) \in\mathcal{M}_1 ( \R^2) \times \mathcal{P}_1 ( \R^2  \times  \R^2) $.

Consider the subset $\mathcal{F}_0\subset\mathcal{M}_1(\R^2)\times\mathcal{P}_1(\R^2\times\R^2)$ of elements $(\omega,f)$ such as $\omega$ is compatible with $\omega_0$. Then, for  $T>0$, $\mathcal{F}:=\mathscr{C}^0([0,T];\mathcal{F}_0)$ endowed with 
\begin{eqnarray} 
\label{DisT}
 \mathcal{W}_1 ( \mu_{1} ,  \mu_{2} ) := \sup_{t\in [0,T]} W_{1} ( \mu_{1} (t) ,  \mu_{2}  (t))
\end{eqnarray}
 is a complete metric space. 
 We first prove the local existence and uniqueness of solutions to the equations \eqref{VE1}-\eqref{VE2}-\eqref{VE3R} by using the Picard-Banach theorem  on $\mathcal{F}$  with $T>0$ small enough for the mapping $\mathcal{T}_{\mu_0}$ we are now going to describe to be a contraction for the previous distance.

 The mapping $\mathcal{T}_{\mu_0}$ is defined as follows. Denote by $\pi^1:\R^2\times\R^2\rightarrow\R^2$ the projection on the first variable. Given  $\mu_0 := ( \omega_{0} , f_{0}) $ in $ \mathcal{F}_0 $ and $\mu := ( \omega , f) $ in $ \mathcal{F} $, we define $\rho  \lbrack  f \rbrack(t) :=  \pi^1_\# f(t) = \langle f(t),1\rangle_{\xi} $, so that $\omega  + \rho  \lbrack  f \rbrack $ is in $\mathscr{C}^0([0,T];\mathcal{M}_{1} ( \R^2) )$. 
Then thanks to Lemma \ref{LipLip},  the vector fields  $ \widetilde{K} \lbrack  \omega  + \rho \lbrack  f \rbrack  \rbrack $ and  $U  \lbrack  \omega  + \rho \lbrack  f \rbrack  \rbrack$ are Lipschitz, uniformly in time; and we can therefore define 
\begin{itemize}
\item[$\bullet$] a unique continuous flow map  $ \phi^\mu $ defined for $(t,x) \in [0,T] \times \R^2$, with values in $ \R^2$, given by
  \begin{eqnarray}
  \label{Cara0}
\phi_t^\mu(x):=   x + \int^{t}_0  \widetilde{K}   \big[  \omega(s)  + \rho \lbrack  f \rbrack(s) \big] (\phi_s^\mu (x))  ds ,
   \end{eqnarray}
\item[$\bullet$] a unique continuous flow map  $ \Sigma^\mu $  defined for $(t,x,\xi) \in [0,T] \times \R^2 \times \R^2$, with values in $ \R^2 \times \R^2$, given by
 \begin{eqnarray}
   \label{Cara2} \Sigma_t^\mu(x,\xi):=  (x,\xi ) +  \int^{t}_0  U  \big[  \omega(s)  + \rho \lbrack  f \rbrack(s)\big] ( \Sigma_s^\mu (x,\xi) ) ds.
  \end{eqnarray}
\end{itemize}
Moreover, for all $t\in[0,T]$, $\mu_t^\mu$ and $\Sigma_t^\mu$ are Lipschitz and we have the following estimate
\begin{eqnarray}
\label{Lip0}
\|    \phi_t^\mu \|_{\Lip} + \|    \Sigma_t^\mu \|_{\Lip}& \leqslant& \exp\left\{\int_0^{t} \max ( \|  \widetilde{H}  \|_{\Lip} ,1) \,  \big[|\omega(s)|(\R^2)+1\big] ds\right\} \\
\nonumber && =\exp\left\{\int_0^{t} \max ( \|  \widetilde{H}  \|_{\Lip} ,1) \,  \big[|\omega_0|(\R^2)+1\big] ds\right\}.
\end{eqnarray}
We then define $\mathcal{T}_{\mu_0} \lbrack \mu  \rbrack$ as a function defined on $[0,T]$, with value in $\mathcal{M}(\R^2)\times\mathcal{M}(\R^2)$ by 
\begin{eqnarray*}
\mathcal{T}_{\mu_0} \lbrack \mu  \rbrack (t)
 := (  \omega^\mu(t),f^\mu(t)) = ({\phi_t^\mu}_{\hspace{-0.7mm}\#}\omega_0,,{\Sigma_t^\mu}_{\hspace{-0.6mm}\#} f_0)
 \end{eqnarray*}
Actually, $\mathcal{T}_{\mu_0}$ takes its values in $\mathcal{F}$.\\
 Indeed, since pushforward transformations conserve the mass and transport the Jordan-Hahn decomposition, for any $t \in [0,T]$, $(\omega^\mu(t),f^\mu(t))\in\mathcal{M}(\R^2)\times\mathcal{P}(\R^2)$, $\omega^\mu(t)$ being compatible with $\omega_0$.  Moreover using  \eqref{bdkernel} we have that for any $t \in [0,T]$, 
$\|   \widetilde{K} \lbrack  \omega  + \rho \lbrack  f \rbrack  \rbrack    (t,\cdot)  \|_{L^\infty  (\R^{2})} \leqslant  \alpha$ with $\alpha := \|  \widetilde{H}  \|_{L^\infty (\R^{2})} \, (1 +  |\omega_0|(\R^2))  $, so that we see from \eqref{Cara0}-\eqref{Cara2} that 
 $ |    \phi^\mu (t,x) | \leq |x| + \alpha t$ and $| \Sigma^\mu  (t,x,\xi) |  \leq ( |x|+|\xi| ) + t  ( \alpha +2|\xi| ) $. Since $\omega_0$ and $f_0$ are finite measures having finite first moments, the previous inequalities insure that so do $\omega^\mu(t)$ and $f^\mu(t)$, for all $t\in[0,T]$. Finally in order to prove that $\mathcal{T}_{\mu_0} \lbrack \mu  \rbrack$ is   in $\mathcal{F}$ it only remains to stress that the continuity in time of $  \phi^\mu  $ and $ f^\mu  $ yields the continuity in time of $\mathcal{T}_{\mu_0} \lbrack \mu  \rbrack$.

Let us now observe that  a fixed point of the mapping  $\mathcal{T}_{\mu_0}$ corresponds to a weak solution of the equations \eqref{VE1}-\eqref{VE2}-\eqref{VE3R} with $\mu_0 = ( \omega_{0} , f_{0}) $ as initial data.
Now let us see that  $\mathcal{T}_{\mu_0}$  is a contraction for $T$ small enough; consider  $\mu_{1} := ( \omega_{1} , f_{1}) $ and $\mu_{2} := ( \omega_{2} , f_{2}) $  in $ \mathcal{F}$. 

Let us define, for $t$ in $[0,T]$,
\begin{eqnarray*}
\lambda (t) := \int_{\R^{2 }}  |  \phi_t^{\mu_{1}}  -  \phi_t^{\mu_{2}}  |   d  |\omega_{0}|
\text{ and } \widetilde{\lambda} (t) := \int_{\R^{2 } \times \R^{2 }}  | \Sigma_t^{\mu_{1}}  -   \Sigma_t^{\mu_{2}}  | df_{0} .
\end{eqnarray*}
We have, from \eqref{Cara0}
\begin{eqnarray*}
\lambda' (t)  &\leqslant& \int_{\R^{2 }}  \left|   \widetilde{K}   \big[  \omega_{1}(t)  + \rho \lbrack  f_{1} \rbrack(t)\big] \circ \phi_t^{\mu_{1}}  -   \widetilde{K}   \big[  \omega_{2}(t)  + \rho \lbrack  f_{2} \rbrack(t)\big] \circ \phi_t^{\mu_{2}} \right|   d  |\omega_{0}|,
\end{eqnarray*}
thanks to \eqref{Cara0}. Now we use the triangle inequality to get $\lambda' (t) \leqslant \lambda_a (t) +  \lambda_b (t)$ with 
\begin{eqnarray*}
 \lambda_a (t)  &:=& \int_{\R^{2 }}  \left|   \widetilde{K}   \big[  \omega_{1}(t)  + \rho \lbrack  f_{1} \rbrack(t)\big] \circ \phi_t^{\mu_{1}} -   \widetilde{K}   \big[  \omega_{2}(t)  + \rho \lbrack  f_{2} \rbrack(t)\big] \circ \phi_t^{\mu_{1}} \right|   d  |\omega_{0}|  ,
 \\
 \lambda_b (t)  &:=&  \int_{\R^{2 }}  \left|   \widetilde{K}   \big[  \omega_{2}(t)  + \rho \lbrack  f_{2} \rbrack(t)\big] \circ \phi_t^{\mu_{1}} -   \widetilde{K}   \big[  \omega_{2}(t)  + \rho \lbrack  f_{2} \rbrack(t)\big] \circ \phi_t^{\mu_{2}} \right|   d  |\omega_{0}|  .
\end{eqnarray*}
We have, for any $t$ in $[0,T]$,
\begin{eqnarray*}
{W}_1 ( \omega_{1}  (t) + \rho \lbrack  f_{1} \rbrack (t) ,  \omega_{2}  (t) + \rho \lbrack  f_{2} \rbrack  (t))
&\leqslant& {W}_1 ( \omega_{1}  (t) ,  \omega_{2}  (t) ) + {W}_1 ( \rho \lbrack  f_{1} \rbrack (t) ,  \rho \lbrack  f_{2} \rbrack  (t)) \\
&\leqslant&  {W}_1 ( \mu_{1} (t) , \mu_{2} (t) ),
\end{eqnarray*}
where we used Lemma \ref{4} for the first inequality and  Lemma \ref{margi} for the second one, since $\rho\lbrack f_i\rbrack(t)=\pi^1_\# f_i(t)$, for $i=1,2$.
Thanks to Lemma \ref{compareU} we therefore obtain that
\begin{eqnarray*}
 \lambda_a (t)  &:=& \int_{\R^{2 }}  \left|   \widetilde{K}   \big[  \omega_{1}(t)  + \rho \lbrack  f_{1} \rbrack(t)\big]  -   \widetilde{K}   \big[  \omega_{2}(t)  + \rho \lbrack  f_{2} \rbrack(t)\big]  \right|   d  {\phi_t^{\mu_1}}_{\hspace{-1.8mm}\#}|\omega_{0}|  ,\\
\\  &\leqslant&  \|  \widetilde{H}  \|_{\Lip} \,   {W}_1 ( \mu_{1} (t) , \mu_{2} (t) ) \   {\phi_t^{\mu_1}}_{\hspace{-1.8mm}\#}|\omega_{0}|(\R^2)
\\ &=&\|  \widetilde{H}  \|_{\Lip} \,   {W}_1 ( \mu_{1} (t) , \mu_{2} (t) )    |\omega_{0}|(\R^2) .
\end{eqnarray*}
On the other hand  using Lemma \ref{LipLip} we obtain 
$$ \lambda_b (t)  \leqslant \|  \widetilde{H}  \|_{\Lip} (1+ |\omega_{0}|(\R^2)  ) \lambda (t) .$$
From the equations \eqref{Cara0}-\eqref{Cara2} we infer that $\lambda (0) = 0$. Then the Gronwall lemma leads to
\begin{eqnarray*}
\lambda (t) &\leqslant& C \int_0^t e^{C(t-s)} {W}_1 ( \mu_{1}  (s) , \mu_{2} (s) ) ds ,
\end{eqnarray*}
for any  $t$ in $[0,T]$, where $C >0$ depends only on $ \|  \widetilde{H}  \|_\Lip$ and $|\omega_0|(\R^2)$. Since, for any couple $(\nu_1,\nu_2)\in\mathcal{M}_1(\R^2)\times\mathcal{M}_1(\R^2)$, $|U[\nu_1]-U[\nu_2]|= |\widetilde{K}[\nu_1]|$, the same computation gives a similar bound for $ \widetilde{\lambda} (t)$.

Now, for any  $t$ in $[0,T]$, denoting by the index $\varphi$ a supremum over the unit ball of $\text{Lip}(\R^{2})$ and the index $\psi$ a supremum over the unit ball of $\text{Lip}(\R^{2}\times\R^2)$.
\begin{eqnarray*}
 {W}_1 ( \mathcal{T}_{\mu_0}  \lbrack \mu_{1}  \rbrack (t) ,  \mathcal{T}_{\mu_0}  \lbrack \mu_{2}  \rbrack (t)  )  
&=& W_1({\phi_t^{\mu_1}}_{\hspace{-1.8mm}\#}\omega_0,{\phi_t^{\mu_2}}_{\hspace{-1.8mm}\#}\omega_0)+W_1({\Sigma_t^{\mu_1}}_{\hspace{-1.8mm}\#} f_0,{\Sigma_t^{\mu_2}}_{\hspace{-1.8mm}\#} f_0)\\
 &=&  \sup_{\varphi }  \,   \int_{\R^2  } \varphi\,  d(  {\phi_t^{\mu_1}}_{\hspace{-1.8mm}\#}\omega_{0}-  {\phi_t^{\mu_2}}_{\hspace{-1.8mm}\#}\omega_{0} )   
 +  \sup_{ \psi }  \,   \int_{\R^4  } \psi\,  d(  {\Sigma_t^{\mu_1}}_{\hspace{-1.8mm}\#}f_{0}-  {\Sigma_t^{\mu_2}}_{\hspace{-1.8mm}\#}f_{0} )    ,
 \\  &=&  \sup_{ \varphi  }  \,   \int_{\R^2  } ( \varphi  \circ  \phi_t^{\mu_1}  -  \varphi  \circ \phi_t^{\mu_2}) d \omega_{0}+  \sup_{\psi }  \,   \int_{\R^4  } (\psi   \circ \Sigma_t^{\mu_1}-\psi   \circ   \Sigma_t^{\mu_2} ) df_0,
 \\ &\leqslant  &  \lambda (t)  + \widetilde{\lambda} (t) 
  \\  &\leqslant& C \int_0^t e^{C(t-s)} {W}_1 ( \mu_{1}  (s) , \mu_{2} (s) )   ds,
\end{eqnarray*}

Therefore
\begin{eqnarray*}
 \mathcal{W}_1 ( \mathcal{T}  \lbrack \mu_{1}  \rbrack ,  \mathcal{T}  \lbrack \mu_{2}  \rbrack  )  
 \leqslant (e^{CT} -1) \mathcal{W}_1 ( \mu_{1}  ,  \mu_{2}  ) ,
 \end{eqnarray*}
so that $\mathcal{T} $ is a contraction for $T$ small enough.
This smallness condition only depends on the total variation of $\omega_{0}$ and on the Lipschitz modulus of $\widetilde{H}$. Since $\omega(t)$ remains compatible with $\omega_0$ at all time, it has in particular always the same total variation. We hence infer the global existence part of the result by an iteration process.

Let us now prove the stability estimate \eqref{stab}.
So let us consider two solutions  $  \mu_1 := ( \omega_1 , f_1) $ and $\mu_2 :=  ( \omega_2 , f_2) $  of the systems   \eqref{VE1}-\eqref{VE2}-\eqref{VE3R} associated to two initial data  $ \mu_{0}^1  := ( \omega_{0}^1 , f_{0}^1) $ and $ \mu_{0}^2 := ( \omega_{0}^2 , f_{0}^2) $. Then, we have the following where  $t$ in $[0,T]$ is understood, 
\begin{eqnarray}
\label{decoup}
W_{1} (  \mu_1 (t) ,\mu_2 (t) ) & =&  {W}_1 ( {\phi_t^{\mu_1}}_{\hspace{-1.8mm}\#}\omega_0^1,{\phi_t^{\mu_2}}_{\hspace{-1.8mm}\#}\omega_0^2) + W_1({\Sigma_t^{\mu_1}}_{\hspace{-1.8mm}\#} f_0^1,{\Sigma_t^{\mu_2}}_{\hspace{-1.8mm}\#} f_0^2).
\end{eqnarray}
Let us first focus our attention on the first term in the right hand side. We have by the triangle inequality that 
\begin{eqnarray*}
{W}_1 ( {\phi_t^{\mu_1}}_{\hspace{-1.8mm}\#}\omega_0^1,{\phi_t^{\mu_2}}_{\hspace{-1.8mm}\#}\omega_0^2) \leqslant {W}_1 ( {\phi_t^{\mu_1}}_{\hspace{-1.8mm}\#}\omega_0^1,{\phi_t^{\mu_2}}_{\hspace{-1.8mm}\#}\omega_0^1) + {W}_1 ( {\phi_t^{\mu_2}}_{\hspace{-1.8mm}\#}\omega_0^1,{\phi_t^{\mu_2}}_{\hspace{-1.8mm}\#}\omega_0^2).
\end{eqnarray*}
Now the first term can be tackled as previously whereas the second one can be bounded by using  Lemma \ref{CompoLip} and the estimate \eqref{Lip0}.
We proceed in the some way for the second term  in the right hand side of \eqref{decoup}.
This proves the desired estimate.

The last statement of the Theorem follows classically from basic transport theory. 
\end{proof}
\subsection{Mean-field limit for regularized kernels }
We infer from Theorem \ref{MFR} the following result about the mean-field limit in the regularized case where \eqref{VE3R} is taken instead of  \eqref{VE3}.
\begin{Corollary}
Assume that $( \omega_{0} , f_{0}) $ is in $\mathcal{M}_1 ( \R^2 ) \times \mathcal{P}_1 ( \R^2 \times  \R^2 ) $.  
Let $( h_{i}^0 ,  h_{i}^1 )_{i\in\N^*} \in (\R^2 \times  \R^2 )^{\N^{*}}$ such that the sequence of empirical measure $(f_{0}^N)_{N \in \N^*}$, defined by
\begin{align*}
f_{0}^N := \frac{1}{N} \sum_{i=1}^{N } \delta_{(h_{i}^0  , h_{i}^1 )} \in \mathcal{P}_1(\R^2\times\R^2),
 \end{align*} 
satisfies
$W_{1} (f_{0}^N , f_{0} ) \rightarrow 0 \text{ when } N  \rightarrow  +\infty .$
Let us denote by $\mu_{N} := (\omega^{N} , f^{N}  )_{N \in\N^*} $, $\mu := (\omega , f)$ the solutions respectively associated to the initial data $( \omega_{0} , f_0^N  )_{N \in\N^*}$, $( \omega_{0}  ,f_{0})$ given by Theorem \ref{MFR}.
Then 
\begin{enumerate}
\item for any $t>0$,  for any $N \geq 1$, 
\begin{eqnarray*}
f^{N} (t) =  \frac{1}{N} \sum_{i=1}^{N } \delta_{(h_{i,N} (t) , h'_{i,N}(t) )} ,
\end{eqnarray*}
where 
\begin{eqnarray}
 \label{N2RT} h_{i,N} '' (t) &=&   \Big( h_{i,N} '(t) -  \widetilde{u}^{N} (t, h_{i,N} (t)  )  \Big)^\perp   , 
 \\  \label{N1RuT} \widetilde{u}^{N} &=& \widetilde{K} \lbrack \omega^{N} \rbrack + \frac{1}{N}  \sum_{j=1}^N \widetilde{H}  ( \cdot - h_{j,N} (t) ), 
\\  	\label{N4RT}( h_{i,N}  (0) ,h'_{i,N}  (0) )   &=& (h_{i}^0 ,h_{i}^1) .
\end{eqnarray}
\item for any $T>0$, 
$\mathcal{W}_1 (\mu_{N} , \mu  ) \rightarrow 0$  when $N  \rightarrow  +\infty $, where $\mathcal{W}_1$ is the distance defined in \eqref{DisT}.
\end{enumerate}
\end{Corollary}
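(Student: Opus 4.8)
The plan is to deduce both assertions directly from Theorem \ref{MFR}, exploiting the explicit pushforward representation of the solution constructed in its proof. For the first assertion I would start from the fact that the unique solution $\mu_N = (\omega^N, f^N)$ supplied by Theorem \ref{MFR} is the fixed point of the map $\mathcal{T}_{\mu_0}$ of that proof (here with initial datum $(\omega_0, f_0^N)$), so that in particular $f^N(t) = {\Sigma_t^{\mu_N}}_{\#} f_0^N$, where $\Sigma^{\mu_N}$ is the characteristic flow defined in \eqref{Cara2}. Since the pushforward of a Dirac mass $\delta_a$ by a map $\Phi$ is $\delta_{\Phi(a)}$ and $f_0^N = \frac{1}{N}\sum_{i=1}^N \delta_{(h_i^0, h_i^1)}$, the measure $f^N(t)$ is again an empirical measure, carried by the points $(h_{i,N}(t), h'_{i,N}(t)) := \Sigma_t^{\mu_N}(h_i^0, h_i^1)$.

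Next I would read off the ODE system satisfied by these trajectories from the definition of the flow. Differentiating \eqref{Cara2} and recalling that $U[\nu](x,\xi) = (\xi, (\xi - \widetilde{K}[\nu](x))^{\perp})$, the first component gives $\frac{d}{dt} h_{i,N}(t) = h'_{i,N}(t)$ (so the prime notation is consistent), while the second gives $h''_{i,N}(t) = \big(h'_{i,N}(t) - \widetilde{K}[\omega^N(t) + \rho[f^N](t)](h_{i,N}(t))\big)^{\perp}$, together with the initial conditions \eqref{N4RT}. It then remains to identify the velocity field: by linearity of $\widetilde{K}$ and the fact that $\rho[f^N](t) = \pi^1_{\#} f^N(t) = \frac{1}{N}\sum_{j=1}^N \delta_{h_{j,N}(t)}$, one gets $\widetilde{K}[\omega^N(t) + \rho[f^N](t)] = \widetilde{K}[\omega^N(t)] + \frac{1}{N}\sum_{j=1}^N \widetilde{H}(\,\cdot - h_{j,N}(t)) = \widetilde{u}^N(t,\cdot)$, which is exactly \eqref{N1RuT}. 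The hypothesis $\widetilde{H}(0)=0$ is what makes the self-interaction term $\widetilde{H}(h_{i,N}(t) - h_{i,N}(t))$ vanish, so that whether $j=i$ is included in the sum or not is immaterial, and \eqref{N2RT} follows.

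For the second assertion I would simply invoke the stability estimate \eqref{stab} of Theorem \ref{MFR}. The two solutions share the same vorticity initial datum $\omega_0$, hence are compatible and the relevant total variation $|\omega_0|(\R^2)$ is common to both, so the constant $C$ in \eqref{stab} is a fixed number depending only on $\|\widetilde{H}\|_{\Lip}$ and $|\omega_0|(\R^2)$. By Definition \ref{WassersteinSigneDistance}, $W_1\big((\omega_0, f_0^N),(\omega_0, f_0)\big) = W_1(\omega_0, \omega_0) + W_1(f_0^N, f_0) = W_1(f_0^N, f_0)$, whence \eqref{stab} yields $W_1(\mu_N(t), \mu(t)) \le e^{2Ct}\, W_1(f_0^N, f_0)$ for every $t \ge 0$. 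Taking the supremum over $t \in [0,T]$ gives $\mathcal{W}_1(\mu_N, \mu) \le e^{2CT}\, W_1(f_0^N, f_0)$, and the right-hand side tends to $0$ since $W_1(f_0^N, f_0) \to 0$ by assumption.

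Since the proof is essentially a translation of facts already established, I do not expect a genuine obstacle. The only point requiring care is the bookkeeping in the first assertion, namely checking that the characteristic flow $\Sigma^{\mu_N}$ transports the empirical measure precisely into the ODE system \eqref{N2RT}--\eqref{N4RT}, and in particular that the cancellation of the self-interaction term rests exactly on the assumption $\widetilde{H}(0)=0$.
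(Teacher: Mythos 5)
Your proof is correct and follows exactly the route the paper intends: the paper states this corollary as an immediate consequence of Theorem \ref{MFR} without writing out details, and your argument (pushforward of the empirical measure by the characteristic flow $\Sigma^{\mu_N}$ for the first assertion, the stability estimate \eqref{stab} combined with $W_1(\mu_0^N,\mu_0)=W_1(f_0^N,f_0)$ for the second) is precisely the implicit derivation, including the correct observation that $\widetilde{H}(0)=0$ is what renders the self-interaction term harmless.
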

Let us recall here that $ \widetilde{H}(0)=0$ so that the velocity $\widetilde{u}_{N}$ can be seen as a regularization of the velocity $u$ of Section  \eqref{M-fl} and of the velocities $v_i$ as well. Hence the equations  \eqref{N2RT}-\eqref{N4RT}, together with the following transport equation for the vorticity:
\begin{eqnarray*}
\partial_t \omega_N + \div (\omega_N \,  \widetilde{u}_N) = 0 ,
\end{eqnarray*}
can be seen as a regularization of the equations of Section  \eqref{M-fl}.\vspace{2mm}\\

It is also possible to obtain the following result for the case where the fluid vorticity is also discretized.
\begin{Corollary}
Assume that $( \omega_{0} , f_{0}) $ is in $\mathcal{M}_1 ( \R^2) \times \mathcal{P}_1 ( \R^2  \times  \R^2 ) $.  
Let $(x_{i}^0 , \alpha_i , h_{i}^0 ,  h_{i}^1 )_{i \in\N^*}\in (\R^2 \times \R \times \R^2 \times  \R^2 )^{\N^{*}}$ and consider the sequence $(\omega_0^N ,f_0^N )_{N\in\N^*}\in \mathcal{M}_1 ( \R^2 ) \times  \mathcal{P}_1 ( \R^2  \times  \R^2 ) $ defined by
\begin{eqnarray*}
\omega_{N}^0 &:=&  \frac{\omega_{0}^+(\R^2)}{\beta_N^+} \sum_{i \in I_N^+ } \alpha_i  \delta_{x_{i}^0 } 
+ \frac{\omega_{0}^-(\R^2)}{\beta_N^-} \sum_{i \in I_N^- } \alpha_i  \delta_{x_{i}^0 },
 \\
 f_{0}^N &:=& \frac{1}{N} \sum_{i=1}^{N } \delta_{(h_{i}^0  , h_{i}^1 )},
\end{eqnarray*}
where
 $I_N^\pm :=   \{ i \in  \llbracket  1 , N\rrbracket \,:\,  \pm  \alpha_i   > 0 \}   \text{ and } 
\beta_N^\pm :=  \sum_{i \in  I_N^\pm} \alpha_i    $. Assume that $W_{1} ( \omega_{0}^N , \omega_{0} )  + W_{1} (f_{0}^N , f_{0} ) \rightarrow 0$  when  $N  \rightarrow  +\infty $.
Let us denote by $\mu_{N} := (\omega^{N} , f^{N}  )_{N \in\N^*} $, $\mu := (\omega , f)$ the solutions respectively associated to the initial data $( \omega_{0}^N , f_{0}^N  )_{N \in\N^*}$, $( \omega_{0}  ,f_{0})$ given by Theorem \ref{MFR}.
Then 
\begin{enumerate}
\item for any $t>0$,  for any $N \geq 1$, 
\begin{eqnarray*}
\omega^{N} (t) =   \frac{\omega_{0}^+(\R^2)}{\beta_N^+} \sum_{i \in I_N^+ } \alpha_i  \delta_{x_{i,N} (t)} 
+ \frac{\omega_{0}^-(\R^2)}{\beta_N^-} \sum_{i \in I_N^- } \alpha_i  \delta_{x_{i,N} (t)}   \text{ and } f_{N} (t) =  \frac{1}{N} \sum_{i=1}^{N } \delta_{(h_{i,N} (t) , h'_{i,N} (t) )} 
\end{eqnarray*}
where 
\begin{eqnarray}
 \label{N1RuTv} x'_{i,N} (t) &=&   \widetilde{u}^{N} (t,x_{i,N} (t)  ),
\\ \label{N2RTv} h_{i,N} '' (t) &=&   \Big( h_{i,N}'(t) -    \widetilde{u}^{N} (t, h_{i,N} (t)  )  \Big)^\perp   , 
 \\  \label{N3RuTv}   \widetilde{u}^{N}&=& \widetilde{K} \lbrack \omega^{N} \rbrack + \frac{1}{N}  \sum_{1 \leq j \leq N } \widetilde{H}  ( \cdot - h_{j,N} (t) ), 
\\  	\label{N4RTv}( x_{i,N} (0) , h_{i,N}  (0) ,h'_{i,N}  (0) )   &=& ( x_{i}^0 ,h_{i}^0 ,h_{i}^1) .
\end{eqnarray}
\item for any $T>0$, 
$\mathcal{W}_1 (\mu_{N} , \mu  ) \rightarrow 0$  when $N  \rightarrow  +\infty $, where $\mathcal{W}_1$ is the distance defined in \eqref{DisT}.
\end{enumerate}
\end{Corollary}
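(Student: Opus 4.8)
The plan is to deduce both assertions directly from Theorem \ref{MFR}, the only genuinely new ingredient being the bookkeeping that makes the discretized vorticity $\omega_0^N$ compatible with $\omega_0$. First I would record this compatibility: the normalizing factors $\frac{\omega_0^\pm(\R^2)}{\beta_N^\pm}$ appearing in the definition of $\omega_0^N$ are chosen precisely so that its positive and negative parts carry the total masses $\omega_0^+(\R^2)$ and $\omega_0^-(\R^2)$ respectively, whence $(\omega_0^N)^\pm(\R^2)=\omega_0^\pm(\R^2)$ and $\omega_0^N$ is compatible with $\omega_0$. In particular $|\omega_0^N|(\R^2)=|\omega_0|(\R^2)$, which is what guarantees that the constant $C$ produced by Theorem \ref{MFR} is the same for all $N$ and for the limiting solution. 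Since each $\omega_0^N$ is a finite combination of Dirac masses and each $f_0^N$ a finite empirical probability measure, both pairs lie in $\mathcal{M}_1(\R^2)\times\mathcal{P}_1(\R^2\times\R^2)$, so Theorem \ref{MFR} applies and furnishes the unique solutions $\mu_N$ and $\mu$.

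For the first assertion I would exploit the representation of the solution built in the proof of Theorem \ref{MFR} as a pushforward by the flow maps, namely $\omega^N(t)={\phi_t^{\mu_N}}_{\#}\omega_0^N$ and $f^N(t)={\Sigma_t^{\mu_N}}_{\#}f_0^N$, the flows being defined by \eqref{Cara0}-\eqref{Cara2}. Since the pushforward of a Dirac mass $\delta_a$ by a map $g$ is $\delta_{g(a)}$ and pushforward preserves the coefficients of a finite linear combination, setting $x_{i,N}(t):=\phi_t^{\mu_N}(x_i^0)$ and $(h_{i,N}(t),h'_{i,N}(t)):=\Sigma_t^{\mu_N}(h_i^0,h_i^1)$ immediately yields the claimed discrete forms of $\omega^N(t)$ and $f^N(t)$. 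It then remains to identify the characteristic ODEs. Differentiating \eqref{Cara0}-\eqref{Cara2} and using $\rho[f^N](t)=\pi^1_\# f^N(t)=\frac1N\sum_j\delta_{h_{j,N}(t)}$, one gets $\widetilde{K}[\omega^N(t)+\rho[f^N](t)]=\widetilde{K}[\omega^N(t)]+\frac1N\sum_j\widetilde{H}(\cdot-h_{j,N}(t))=\widetilde{u}^N$; recalling $U[\nu](x,\xi)=(\xi,(\xi-\widetilde{K}[\nu](x))^\perp)$, this is exactly \eqref{N1RuTv}-\eqref{N3RuTv}, with the initial conditions \eqref{N4RTv}.

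For the second assertion I would simply invoke the stability estimate \eqref{stab} of Theorem \ref{MFR}(b), which applies because $\omega^N(t)$ and $\omega(t)$ are both compatible with $\omega_0$ (pushforward transports the Jordan--Hahn decomposition). For $t\in[0,T]$ this gives
\begin{eqnarray*}
W_1(\mu_N(t),\mu(t))\leq e^{2Ct}\,W_1(\mu_0^N,\mu_0)=e^{2Ct}\big(W_1(\omega_0^N,\omega_0)+W_1(f_0^N,f_0)\big),
\end{eqnarray*}
with $C$ uniform in $N$ by the first paragraph. Taking the supremum over $t\in[0,T]$ in the definition \eqref{DisT} of $\mathcal{W}_1$ and using the assumed convergence $W_1(\omega_0^N,\omega_0)+W_1(f_0^N,f_0)\to 0$ yields $\mathcal{W}_1(\mu_N,\mu)\to 0$.

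The argument is essentially that of the preceding Corollary, now carried out with the fluid vorticity discretized as well, so I expect no serious analytic obstacle. The one point deserving care, and the only genuinely new one, is the compatibility and mass-matching check of the first paragraph: it is what simultaneously makes the signed distance $W_1(\mu_0^N,\mu_0)$ meaningful and renders the exponential constant $C$ in \eqref{stab} independent of $N$. Without it the stability estimate could not be passed to the limit uniformly, and the conclusion $\mathcal{W}_1(\mu_N,\mu)\to 0$ would not follow.
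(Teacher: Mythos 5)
Your argument is correct and is exactly the one the paper intends (the corollary is stated without proof as a direct consequence of Theorem \ref{MFR}): the solution built there is the pushforward of the initial data by the flows \eqref{Cara0}--\eqref{Cara2}, which sends the Dirac combinations to the discrete dynamics \eqref{N1RuTv}--\eqref{N4RTv}, and the convergence follows from the stability estimate \eqref{stab} with a constant uniform in $N$ thanks to the mass-matching normalization. Nothing essential is missing.
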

The system  \eqref{N1RuTv}-\eqref{N4RTv} describe the dynamics of $2N$  pointwise vortices localized in $(x_{i,N} (t) )_{1 \leq i \leq N} $ and in $(h_{i,N} (t) )_{1 \leq i \leq N} $ under pairwise regular interaction, the first $N$ ones being massless. 

\subsection{Stability of the hydrodynamic regime}
\label{Hydro}
Given a map $v:\R^d\rightarrow\R^d$, one can define its ``graph'' function 
\begin{align*}
\tau^v:\R^d&\longrightarrow\R^d\times\R^d\\
x &\longmapsto (x,v(x)).
\end{align*}
For any measure $\rho\in\mathcal{M}(\R^d)$, the pushforward measure $\tau^v_\# \rho$ is hence a well-defined measure on $\R^d\times\R^d$. In the particular case where $v$ is a continuous function, one may write, for any bounded continuous test function $\phi:\R^d\times\R^d\rightarrow\R$,
\begin{align*}
\langle \tau^v_\# \rho,\phi\rangle =\langle \rho,\phi\circ\tau^v\rangle  = \int_{\R^d} \phi(x,v(x)) d\rho(x),
\end{align*}
so that the measure $\tau^v_\# \rho$ can hence be seen heuristically as the product $\rho(x) \delta_{v(x)}(\xi)$ and may be called \emph{monokinetic}. The generalization to the case of time-dependent measures/functions is straightforward. Actually, another corollary of Theorem  \ref{MFR} deals with the case of solutions taking the previous form. Formally, in a time-dependent frame, if $f:=\tau^v_\#\rho$ the equations \eqref{VE1}-\eqref{VE3} reduce to the following system for the unknowns $\omega(t,x)$, $\rho(t,x)$ and $v(t,x)$: 
\begin{eqnarray*}
 \partial_t \omega + \div_x (\omega  u)  &=& 0 ,
\\   \partial_t  \rho  + \div_x ( \rho v ) &=& 0 ,
\\   \partial_t  (\rho v)  + \div_x ( \rho v \otimes v) &=&\rho (v-u)^\perp  ,
\end{eqnarray*}
where $u$ is still given by  
\begin{eqnarray}
\label{noyau}
u = K  \lbrack \omega + \rho \rbrack .
\end{eqnarray}
When $\rho$ does not vanish the third equation in the system above can be simplified into
\begin{eqnarray*}
   \partial_t  v  +  v \cdot \nabla_x v &=& (v-u)^\perp  .
\end{eqnarray*}
Here again we will deal with the case of a regularized kernel substituting the law
\begin{eqnarray}
\label{noyauR}
u = \widetilde{K}  \lbrack \omega + \rho \rbrack 
\end{eqnarray}
to \eqref{noyau}.
For such a system we have the following result of local-in-time result.
\begin{Proposition}[] 
\label{HydroRClassiq}
Let  $( \omega_{0} , \rho_{0} , v_{0} ) \in \mathcal{M}_1 ( \R^2) \times  \mathcal{P}_1 ( \R^2)  \times  \Lip  ( \R^2)    $.
Then there exists $T >0$ and $ (\omega , \rho , v ) \in L^{\infty} (0, T ; \mathcal{M}_1 ( \R^2) \times   \mathcal{P}_1 ( \R^2)  \times   \Lip  ( \R^2)   )$ solution of
\begin{eqnarray*}
\partial_t \omega + \div_x (\omega  u)  &=& 0 ,
\\   \partial_t  \rho  + \div_x ( \rho v ) &=& 0 ,
\\   \partial_t  v  +  v \cdot \nabla_x v &=& (v-u)^\perp  ,
\end{eqnarray*}
with  $ (\omega (0) ,\rho (0) , v (0) ) = (\omega_{0} , \rho_{0} , v_{0} )$ and  $u $ given by \eqref{noyauR}.
\end{Proposition}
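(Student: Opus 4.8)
The plan is to reduce Proposition \ref{HydroRClassiq} to Theorem \ref{MFR} by exhibiting the monokinetic solution as a pushforward of a solution of the kinetic system. First I would construct the velocity field $v$ and the flow by a fixed-point argument identical in structure to the one used for $\mathcal{T}_{\mu_0}$ in the proof of Theorem \ref{MFR}, but now carried out on the smaller space where the kinetic part is constrained to be monokinetic. Concretely, set the initial datum $f_0:=\tau^{v_0}_\#\rho_0$ and $\mu_0:=(\omega_0,f_0)\in\mathcal{M}_1(\R^2)\times\mathcal{P}_1(\R^2\times\R^2)$, which lies in the admissible class of Theorem \ref{MFR} since $v_0\in\Lip(\R^2)$ guarantees a finite first moment. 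Theorem \ref{MFR}(a) then produces a unique weakly continuous curve $(\omega_t,f_t)$ solving \eqref{VE1}-\eqref{VE2}-\eqref{VE3R}, and part (c) shows that if $v_0$ (hence $f_0$) is Lipschitz, the solution stays Lipschitz locally in time.

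The key structural step is to show that this solution remains monokinetic, i.e.\ that $f_t=\tau^{v_t}_\#\rho_t$ for some $\rho_t\in\mathcal{P}_1(\R^2)$ and some $v_t\in\Lip(\R^2)$, on a time interval $[0,T]$. I would argue this at the level of the characteristic flow $\Sigma_t^\mu$ from \eqref{Cara2}. The crucial observation is that $U[\nu](x,\xi)=(\xi,(\xi-\widetilde{K}[\nu](x))^\perp)$ has a triangular structure: the evolution of the $\xi$-component is driven by $x$ and $\xi$, while the $x$-component moves with velocity $\xi$. Starting from the graph $\{(x,v_0(x))\}$, the flow $\Sigma_t^\mu$ maps this graph to another graph as long as the projection $(x,\xi)\mapsto \pi^1(\Sigma_t^\mu(x,v_0(x)))$ remains an invertible Lipschitz map of $x$; this defines $\rho_t$ as the first marginal and $v_t$ as the induced velocity. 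Pushing the equations \eqref{VE1}-\eqref{VE2} forward through the graph structure yields exactly the three scalar equations in the statement, with the third equation simplifying from $\div_x(\rho v\otimes v)=\rho(v-u)^\perp$ to the Burgers-type form $\partial_t v+v\cdot\nabla_x v=(v-u)^\perp$ wherever $\rho$ is supported.

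The main obstacle will be controlling the invertibility of the spatial projection of the flow, i.e.\ ensuring the graph does not develop caustics, which is precisely why the result is only local in time. I would quantify this via a Gronwall estimate on $\|\nabla_x v_t\|_{L^\infty}$ (equivalently on the Lipschitz constant of $v_t$): differentiating the characteristic equations formally gives a Riccati-type inequality for $\|\nabla_x v_t\|_{L^\infty}$ whose source terms are controlled by $\|\widetilde{H}\|_\Lip$, $\|\widetilde{H}\|_{L^\infty}$ and $|\omega_0|(\R^2)$ through Lemma \ref{LipLip}. This inequality guarantees a finite blow-up-free time $T>0$ depending only on $\|\nabla v_0\|_{L^\infty}$ and these data, which is exactly the $T$ of the proposition. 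The regularity bound \eqref{Lip0} already furnishes the needed uniform Lipschitz control on $\Sigma_t^\mu$ to run this argument. Once monokineticity and the Lipschitz bound are established on $[0,T]$, the fact that the triple $(\omega,\rho,v)$ solves the displayed system follows by reading off the marginal and the graph structure from the kinetic weak formulation, and the membership in $L^\infty(0,T;\mathcal{M}_1\times\mathcal{P}_1\times\Lip)$ is immediate.
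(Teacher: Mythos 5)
Your proposal is essentially correct, but it takes a genuinely different and more roundabout route than the paper. The paper proves Proposition \ref{HydroRClassiq} \emph{directly} by the method of characteristics on the hydrodynamic unknowns: one sets up a fixed point for the triple $(\omega,\rho,v)$ in which $v$ is determined along its own flow $\dot Y = v(t,Y)$, $\dot V = (V-u(t,Y))^\perp$, and $\omega$, $\rho$ are pushed forward along the flows of $u$ and $v$ respectively; the regularized kernel $\widetilde K$ makes $u$ Lipschitz via Lemma \ref{LipLip}, and the quadratic term $v\cdot\nabla_x v$ is what forces the result to be local in time. You instead construct the kinetic solution $(\omega_t,f_t)$ of Theorem \ref{MFR} with monokinetic data $f_0=\tau^{v_0}_\#\rho_0$ and propagate the graph structure through the flow $\Sigma^\mu_t$, recovering $(\rho_t,v_t)$ from the invertibility of $x\mapsto\pi^1(\Sigma^\mu_t(x,v_0(x)))$ for small $t$. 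This is viable --- the triangular structure of $U[\nu]$ and the bound \eqref{Lip0} do give the bi-Lipschitz control you need, and your Riccati estimate on $\|\nabla_x v_t\|_{L^\infty}$ correctly identifies the obstruction to global existence --- and it has the advantage of establishing the identity $f=\tau^v_\#\rho$ at the same time, which the paper defers to Corollary \ref{Hydr} (proved there via the uniqueness in Theorem \ref{MFR}). The cost is that you must carry the full kinetic machinery to prove a statement the paper obtains by a self-contained ODE argument.

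One step in your write-up is wrong as stated, though it is dispensable: you invoke Theorem \ref{MFR}(c) with the claim that ``$v_0$ (hence $f_0$) is Lipschitz.'' The datum $f_0=\tau^{v_0}_\#\rho_0$ is a measure concentrated on the two-dimensional graph $\{(x,v_0(x))\}$ inside $\R^2\times\R^2$; it is singular with respect to Lebesgue measure and is not an element of $\Lip(\R^2\times\R^2)$, so part (c) does not apply. Fortunately the Lipschitz regularity you actually need is that of $v_t$, not of $f_t$, and that follows from the flow bound \eqref{Lip0} together with the bi-Lipschitz inversion of the spatial projection (equivalently, from your Gronwall/Riccati argument), so you should simply delete the appeal to part (c).
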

\begin{proof}
The proof is an easy application of the  
method of characteristics.
\end{proof}
Then we have the following.
\begin{Corollary}[]
\label{Hydr}
Assume that $( \omega_{0} , \rho_{0} , v_{0} ) \in \mathcal{M}_1 ( \R^2) \times  \mathcal{P}_1 ( \R^2) \times  \Lip  ( \R^2)    $.
Let $T>0$ and $ (\omega , \rho , v ) \in L^{\infty} (0, T ; \mathcal{M}_1 ( \R^2) \times   \mathcal{P}_1 ( \R^2)  \times   \Lip  ( \R^2)   )$ be given by
Theorem \ref{HydroRClassiq}. Consider  $f_{0}:={\tau^{v_0}}_{\hspace{-1mm}\#}\rho_0 \in  \mathcal{P}_1 ( \R^2  \times  \R^2 )$. Let be given a sequence  $(f_{0}^N)_{N \geqslant 1} $ is in $ \mathcal{P}_1 ( \R^2  \times  \R^2 ) $ such that $W_{1} (f_{0}^N , f_{0} ) \rightarrow 0 \text{ when } N  \rightarrow  +\infty$. Let us denote by $\mu^{N} := (\omega^{N} , f^{N}  )_{N \geq 1} $, $\mu := (\omega , f)$ the solutions respectively associated to the initial data $( \omega_{0} , f_{0}^N  )_{N \geq 1}$, $( \omega_{0}  ,f_{0})$ given by Theorem \ref{MFR}.
Then, $f$ actually equals to $\tau^v_\#\rho$ and, for the time $T$ given above and the distance $\mathcal{W}_1$ defined by \eqref{DisT}, $\mathcal{W}_1 (\mu^{N} , \mu  ) \rightarrow 0$  when $N  \rightarrow  +\infty $.
\end{Corollary}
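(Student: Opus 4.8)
The plan is to treat the two assertions of the corollary separately, since the convergence $\mathcal{W}_1(\mu^N,\mu)\to 0$ follows almost immediately from stability, whereas the identification $f=\tau^v_\#\rho$ carries the real content. For the convergence, I would simply invoke the stability estimate \eqref{stab} of Theorem \ref{MFR}(b). The two families of solutions share the same fluid initial datum $\omega_0$, so by Definition \ref{WassersteinSigneDistance} the initial distance between the couples reduces to $W_1((\omega_0,f_0^N),(\omega_0,f_0))=W_1(\omega_0,\omega_0)+W_1(f_0^N,f_0)=W_1(f_0^N,f_0)$. Estimate \eqref{stab} then gives $W_1(\mu^N(t),\mu(t))\le e^{2Ct}W_1(f_0^N,f_0)$ for every $t$, with $C$ depending only on $\|\widetilde H\|_{\Lip}$ and $|\omega_0|(\R^2)$, hence uniform in $N$ (the total variation of the fluid vorticity being preserved along the flow). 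Taking the supremum over $t\in[0,T]$ yields $\mathcal{W}_1(\mu^N,\mu)\le e^{2CT}W_1(f_0^N,f_0)\to 0$.

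For the identification, the strategy is to show that the monokinetic couple $(\omega,\tau^v_\#\rho)$ is itself a weakly continuous solution of \eqref{VE1}-\eqref{VE2}-\eqref{VE3R} with initial datum $(\omega_0,f_0)$, and then to conclude by the uniqueness statement of Theorem \ref{MFR}(a) that it must coincide with the solution $(\omega,f)$. Two preliminary checks are needed. First, $\tau^v_\#\rho\in\mathcal{P}_1(\R^2\times\R^2)$, which follows from $\rho\in\mathcal{P}_1(\R^2)$ and the at-most-linear growth of the Lipschitz field $v$. Second, and crucially, the first marginal of $\tau^v_\#\rho$ is $\rho$ itself, because $\pi^1\circ\tau^v=\mathrm{Id}$; consequently the velocity field $u=\widetilde K[\omega+\rho]$ is literally the same in the kinetic and hydrodynamic systems, and equation \eqref{VE1} is common to both. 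It then remains to verify the Vlasov equation \eqref{VE2} against test functions $\phi(x,\xi)$: writing $\langle\tau^v_\#\rho,\phi\rangle=\int\phi(x,v(t,x))\,d\rho(t,x)$, I would differentiate in time, using the continuity equation for $\rho$ to treat $\partial_t\rho$ and the chain rule for the $x$-dependence entering through $v$.

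The \emph{key computation}, and the step I expect to require the most care, is precisely this differentiation. After applying the continuity equation and the chain rule, the terms carrying $\nabla_x\phi$ reproduce exactly $\langle\rho,v\cdot(\nabla_x\phi)(\cdot,v)\rangle$, while the terms carrying $\nabla_\xi\phi$ collapse into $\langle\rho,\nabla_\xi\phi(\cdot,v)\cdot(\partial_t v+v\cdot\nabla_x v)\rangle$; the momentum equation $\partial_t v+v\cdot\nabla_x v=(v-u)^\perp$ of Proposition \ref{HydroRClassiq} then converts this into $\langle\rho,\nabla_\xi\phi(\cdot,v)\cdot(v-u)^\perp\rangle$. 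Evaluating the target weak form $\langle\tau^v_\#\rho,\xi\cdot\nabla_x\phi+(\xi-u)^\perp\cdot\nabla_\xi\phi\rangle$ at $\xi=v(x)$ reproduces precisely these two contributions, establishing that $(\omega,\tau^v_\#\rho)$ solves \eqref{VE2} weakly. Equivalently, one may argue at the level of characteristics: the phase-space flow $\Sigma^\mu$ of Theorem \ref{MFR} solves $\dot X=\Xi$, $\dot\Xi=(\Xi-u(X))^\perp$, and a point $(x_0,v_0(x_0))$ on the graph of $v_0$ is carried to $(Y(t),v(t,Y(t)))$, where $Y$ is the $\rho$-characteristic $\dot Y=v(t,Y)$; thus the graph of $v_0$ is transported to the graph of $v(t,\cdot)$, i.e. $(\Sigma^\mu_t)_\#f_0=\tau^{v(t)}_\#\rho(t)$. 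In either route the main subtlety is ensuring that the hydrodynamic solution of Proposition \ref{HydroRClassiq} is regular enough, in particular continuous in time and with $v$ uniformly Lipschitz on $[0,T]$, for these manipulations and for the coincidence of the two self-consistent velocity fields to be fully justified.
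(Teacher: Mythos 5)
Your proposal is correct and matches what the paper clearly intends: the paper states Corollary \ref{Hydr} without proof, but the convergence is an immediate consequence of the stability estimate \eqref{stab} (with equal fluid data $\omega_0$, so the initial distance reduces to $W_1(f_0^N,f_0)$), and the identification $f=\tau^v_\#\rho$ follows, exactly as you argue, from the fact that the phase-space characteristics $\Sigma^\mu_t$ transport the graph of $v_0$ onto the graph of $v(t,\cdot)$ (since $\pi^1_\#(\tau^v_\#\rho)=\rho$ makes the two self-consistent velocity fields coincide), combined with the uniqueness part of Theorem \ref{MFR}. Both your weak-formulation computation and your characteristics argument are sound, and your flagged subtlety (time-continuity and uniform Lipschitz bounds for $v$ from Proposition \ref{HydroRClassiq}) is the right thing to check.
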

\section{Existence results}
\label{existence}

\subsection{Weak solutions}
\label{PasYounik}
The main result of this section is the following 
\begin{Theorem}
\label{PasLoep}
If  $\omega_0 $ is in $(L^\frac{4}{3} \cap L^1 ) ( \R^{2})$ and $f_0$ is in  $(L^\infty \cap L^1 ) ( \R^{2} \times  \R^{2})$ such that the kinetic energy of the dispersed phase is  finite:
\begin{align*}
 \int_{\R^2 \times \R^2 } f_0 (x,\xi) |\xi|^2 dx d \xi < + \infty ,
\end{align*}
  then for any  $T>0$ there exists at least one
weak solution 
 $$ ( \omega , f) \in  \bigcap_{p\in[1,\infty[}\mathscr{C}^0\Big(  \lbrack 0,T]\,; L^{4/3}  ( \R^{2}  ) \times L^p ( \R^{2} \times  \R^{2}) \,\Big) $$
  to the equations \eqref{VE1}-\eqref{VE3}. Morover  the kinetic energy of the dispersed phase  of this solution  is finite at any time: 
\begin{align*}
\forall t\in[0,T],\quad \int_{\R^2 \times \R^2 } f (t,x,\xi) |\xi|^2 dx d \xi < + \infty .
\end{align*}
\end{Theorem}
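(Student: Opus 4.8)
The plan is to construct the solution by a double regularization---of the Biot--Savart kernel and of the initial data---so as to fall within the scope of Theorem~\ref{MFR}, and then to pass to the limit by compactness. Concretely, I would mollify $H$ into $\widetilde H_\eps:=H\ast\chi_\eps$ with $\chi_\eps$ an even mollifier, so that $\widetilde H_\eps\in W^{1,\infty}(\R^2)$ and $\widetilde H_\eps(0)=0$, and truncate $(\omega_0,f_0)$ into compactly supported data $(\omega_0^\eps,f_0^\eps)$ converging to $(\omega_0,f_0)$ in $L^1\cap L^{4/3}$, respectively in $L^1\cap L^\infty$ and in kinetic energy. Theorem~\ref{MFR} then yields solutions $(\omega^\eps,f^\eps)$, obtained by transporting $\omega_0^\eps$ along the flow of the divergence-free field $\widetilde K_\eps[\omega^\eps+\rho^\eps]$ and $f_0^\eps$ along the flow of $U[\omega^\eps+\rho^\eps]=(\xi,(\xi-\widetilde K_\eps[\omega^\eps+\rho^\eps])^\perp)$. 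Since both fields are divergence-free (in $x$, respectively in $(x,\xi)$), their flows preserve Lebesgue measure, whence the key conservation of all $L^p$ norms: $\|\omega^\eps(t)\|_{L^p}=\|\omega_0^\eps\|_{L^p}$ and $\|f^\eps(t)\|_{L^p}=\|f_0^\eps\|_{L^p}$, uniformly in $\eps$ and $t$.

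The second step is to make the estimates close uniformly in $\eps$. From $f^\eps\in L^\infty$ and a finite second $\xi$-moment, the elementary splitting $\int_{\R^2} f\,d\xi\le \|f\|_{L^\infty}\pi R^2+R^{-2}\int f|\xi|^2\,d\xi$ optimized in $R$ gives $\|\rho^\eps(t)\|_{L^2}\lesssim\|f_0\|_{L^\infty}^{1/2}E^\eps(t)^{1/2}$, where $E^\eps(t):=\int_{\R^2\times\R^2}f^\eps|\xi|^2$; together with the boundedness of $\|\rho^\eps\|_{L^1}=\|f_0^\eps\|_{L^1}$ this places $\omega^\eps+\rho^\eps$ in $L^1\cap L^{4/3}$ uniformly, hence (by the Hardy--Littlewood--Sobolev bound, which holds uniformly in $\eps$ since $\widetilde K_\eps[g]=K[\chi_\eps\ast g]$) the velocity $u^\eps$ in $L^4$ and $\nabla u^\eps$ in $L^{4/3}$. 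The crucial gyroscopic cancellation then controls the energy: differentiating and integrating by parts,
\begin{equation*}
\frac{d}{dt}E^\eps=2\int_{\R^2\times\R^2} f^\eps\,\xi\cdot(\xi-u^\eps)^\perp=2\int_{\R^2\times\R^2}f^\eps\,u^\eps\cdot\xi^\perp,
\end{equation*}
since $\xi\cdot(\xi-u)^\perp=u\cdot\xi^\perp$; bounding the right-hand side by $2(\int\rho^\eps|u^\eps|^2)^{1/2}(E^\eps)^{1/2}\le 2\|\rho^\eps\|_{L^2}^{1/2}\|u^\eps\|_{L^4}(E^\eps)^{1/2}$ and feeding back the previous bounds yields a differential inequality of the form $\frac{d}{dt}E^\eps\lesssim (E^\eps)^{3/4}+E^\eps$, which excludes finite-time blow-up and gives $\sup_{[0,T]}E^\eps\le C(T)$ uniformly.

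The third step extracts limits and passes to the limit in the nonlinearities. The uniform bounds give, up to a subsequence, $\omega^\eps\rightharpoonup\omega$ in $L^\infty_t L^{4/3}$ weak-$\ast$ and $f^\eps\rightharpoonup f$ in $L^\infty_t L^p_{x,\xi}$ weak-$\ast$ for every $p<\infty$, while the uniform energy bound furnishes tightness in $\xi$, so that $\rho^\eps\rightharpoonup\rho$ and $j^\eps:=\int f^\eps\xi\,d\xi\rightharpoonup j$ after controlling the $\xi$-tails by $E^\eps$. The heart of the matter is strong compactness of the fluid velocity: $u^\eps$ is bounded in $L^\infty_t W^{1,4/3}_{x,\mathrm{loc}}$ (because $\curl u^\eps=\omega^\eps+\rho^\eps$ and $\div u^\eps=0$), while the combined conservation law \eqref{VE5v}, namely $\partial_t(\omega^\eps+\rho^\eps)+\div(\omega^\eps u^\eps+j^\eps)=0$, controls $\partial_t\curl u^\eps$ and hence $\partial_t u^\eps$ in a negative Sobolev norm; an Aubin--Lions--Simon argument then gives $u^\eps\to u$ strongly in $L^2_{\mathrm{loc}}([0,T]\times\R^2)$, and linearity of $K$ identifies $u=K[\omega+\rho]$. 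With strong convergence of $u^\eps$ in hand, every nonlinear term passes to the limit by a weak$\times$strong argument: $\omega^\eps u^\eps\rightharpoonup\omega u$, and in the kinetic equation $\int f^\eps u^{\eps\perp}\cdot\nabla_\xi\psi=\int u^{\eps\perp}\cdot(\int f^\eps\nabla_\xi\psi\,d\xi)$ converges since the inner $\xi$-average converges weakly in $x$ against the strongly convergent $u^\eps$, the term $\int f^\eps\xi^\perp\cdot\nabla_\xi\psi$ being linear in $f^\eps$ with $\nabla_\xi\psi$ compactly supported.

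Finally, the $L^p$-valued time continuity is recovered from the transport structure of the limit equations (the time derivatives being controlled in negative norms, one interpolates with the uniform $L^\infty_t L^p$ bounds), and the propagation of finite kinetic energy at every time follows from $\sup_{[0,T]}E^\eps\le C(T)$ together with the lower semicontinuity of $\int f|\xi|^2$ under weak convergence. I expect the main obstacle to be precisely the strong compactness of $u^\eps$ and the simultaneous control of the $\xi$-variable: one must show that no mass of $f^\eps$ escapes to infinity in $\xi$ (tightness from the energy bound) and that the product $f^\eps u^\eps$, coupling a merely weakly convergent density with a velocity that is only locally strongly convergent, indeed passes to the limit. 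The gyroscopic identity $\xi\cdot(\xi-u)^\perp=u\cdot\xi^\perp$ is what makes the energy estimate---and thus the whole compactness scheme---work, in contrast with a genuine drag force for which this cancellation is absent.
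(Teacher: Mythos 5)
Your construction follows the paper's strategy almost step for step: double regularization (kernel and data) so as to invoke Theorem~\ref{MFR}, conservation of all $L^p$ norms by the measure-preserving flows, the moment interpolation $\|\rho^\eps\|_{L^2}\lesssim M_2(f^\eps)^{1/2}$ and $\|j^\eps\|_{L^{4/3}}\lesssim M_2(f^\eps)^{3/4}$, Hardy--Littlewood--Sobolev for $u^\eps$ in $L^4$, the gyroscopic cancellation closing the energy estimate by Gr\"onwall, and Aubin--Lions for the velocity. All of that is sound and matches the paper.

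The genuine gap is in your final limit passage for the vorticity equation. You claim that ``every nonlinear term passes to the limit by a weak$\times$strong argument,'' but for the product $\omega^\eps u^\eps$ the exponents do not close: $\omega^\eps$ converges only weakly-$\star$ in $L^\infty_t(L^{4/3}_x)$ (there is no better integrability to be had from $\omega_0\in L^1\cap L^{4/3}$), so the weak$\times$strong argument requires $u^\eps\to u$ strongly in $L^4_{x,\mathrm{loc}}$ --- the dual endpoint --- whereas Aubin--Lions, starting from the uniform $W^{1,4/3}_{x,\mathrm{loc}}\cap L^4_x$ bound, only delivers strong convergence in $L^p_{x,\mathrm{loc}}$ for $p<4$. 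Writing $\int\omega^\eps(u^\eps-u)\phi$ and trying to absorb the difference shows that no admissible pair of exponents works. (Your treatment of the kinetic equation is fine, since $\int f^\eps\nabla_\xi\psi\,d\xi$ is bounded in $L^\infty_x$ and pairs against $u^\eps$ converging strongly in $L^1_{\mathrm{loc}}$; the obstruction is specific to $\omega^\eps u^\eps$.) The paper explicitly flags this failure and resolves it by invoking the stability theory of DiPerna--Lions for transport equations with Sobolev, divergence-free vector fields: since $u^\eps$ is bounded in $L^\infty_t(W^{1,4/3}_{x,\mathrm{loc}})$ and converges strongly in $L^p_{x,\mathrm{loc}}$, $p<4$, the transported quantities $\omega^\eps$ and $f^\eps$ converge \emph{strongly} in $\mathscr{C}^0_t(L^{4/3}_x)$ and $\mathscr{C}^0_t(L^p_{x,\xi})$, after which strong $L^{4/3}$ against weak $L^4$ passes to the limit in $\omega^\eps u^\eps$. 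This same ingredient is what yields the $\mathscr{C}^0_t(L^{4/3})\times\mathscr{C}^0_t(L^p)$ regularity asserted in the statement; your proposed interpolation of negative-norm time-derivative bounds with $L^\infty_t(L^p)$ bounds would only give weak continuity in time, not the strong $L^p$-valued continuity claimed.
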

\begin{proof}
Let us introduce some notations. Given two functions $g(\xi) $ and $ h (x,\xi)$, and a nonnegative number $\alpha$, we denote:
\begin{equation*}
m_\alpha (g) :=  \int_{\R^{2}} g (\xi) |\xi|^\alpha d\xi  \text{ and } M_\alpha (h) :=  \int_{\R^{4}} h (x,\xi) |\xi|^\alpha dx d\xi  .
\end{equation*}
We have the following useful result
\begin{Lemma}\label{lem:interp} If $g \in
L^\infty(\R_+\times\R^2\times\R^2)$ (positive) such as
$m_\gamma(g)(t,x)$ is finite almost everywhere in time-space, then for
all $\alpha<\gamma$ we have, almost everywhere in time-space
\begin{align*}
m_\alpha(g)(t,x) \leq C(\|g\|_{L^\infty}+1)m_\gamma(g)(t,x)^{\frac{\alpha+2}{\gamma+2}}.
\end{align*}
\end{Lemma}
\begin{proof}
For all $R>0$
\begin{align*}
m_\alpha(g)(t,x) &= \int_{\xi \leq R} g(t,x,\xi) |\xi|^\alpha d \xi + \int_{\xi > R} g(t,x,\xi)|\xi|^\alpha d \xi\\
&\leq C \|g\|_{L^\infty}R^{\alpha+2}+\frac{1}{R^{\gamma-\alpha}}m_\gamma(g)(t,x),
\end{align*}
and the real $\displaystyle R=m_\gamma(t,x)^{\frac{1}{\gamma+2}}$ does the trick.
\end{proof}
Now consider $(\omega_0^n,f_0^n)$ a smooth compactly supported approximation of $(\omega_0,f_0)$. Recall the definition
\begin{align*}
H(x) := \frac{1}{2\pi}\frac{x^\perp}{|x|^2}.
\end{align*}
Consider a regularizing kernel $(\varphi_n)_{n\in\N}$ of pair functions, in such a way that for all $n\in\N$ 
\begin{align*}
H_n := H\star\varphi_n \in W^{1,\infty}(\R^2),\quad\text{and}\quad H_n(0)=0.
\end{align*}

Theorem \ref{MFR} allows us to construct for every $n\in\N$ a global strong solution $(\omega^n,f^n)$ to the system
\begin{align}
\label{eq:approx1}& \partial_t \omega^n + u^n \cdot \nabla_x \omega^n = 0,\quad \forall (t,x)\in[0,T]\times\R^2,\\
\label{eq:approx2} & \omega^n(0,x) = \omega_{0}^n(x),\quad \forall x\in\R^2,\\
\label{eq:approx3} & \partial_t f^n +\xi \cdot \nabla_x f^n +(\xi-u^n)^\perp \cdot \nabla_\xi f^n = 0,\quad \forall (t,x,\xi)\in[0,T]\times\R^2 \times\R^2, \\
\label{eq:approx4}&f^n(0,x,\xi) = f_{0}^n(x,\xi),\quad \forall (x,\xi)\in\R^2\times\R^2,
\end{align}
where $u^n:=K_n[\rho^n+\omega^n]$ and $K_n$ is defined by $K_n[g] = K[g]\star\varphi_n = H_n\star g $, with $(\varphi_n)_{n\in\N}$.  Note that $\omega^n$ and $f^n$ are compactly supported (and the later positive), and we have, for all $n\in\N$,
\begin{align}
\label{ineq:flinfty}\|f^n\|_{L^\infty} &= \|f_0^n\|_{L^\infty}\leq \|f_0\|_{L^\infty}, \\
\label{ineq:omegal43}\|\omega^n\|_{L^\infty_{t}(L^{4/3}_x)} &= \|\omega_0^n\|_{L^{4/3}}\leq \|\omega_{0}\|_{L^{4/3}},\\
\label{ineq:rhol1}\|\rho^n\|_{L^\infty_t(L^1_x)} &= \|f_{0}^n\|_{L^1} \leq \|f_{0}\|_{L^1}.
\end{align}
From now on, $C_{\text{in}}^T$ will denote a positive constant (that may vary from line to line) depending only on the initial data and $T$.\vspace{2mm}\\
Denoting $\displaystyle \rho^n:=\int_{\R^2} f^n d\xi$, $\displaystyle j^n:=\int_{\R^2} f^n \xi d\xi$, and applying lemma \ref{lem:interp} with respectively $\alpha=0=2-\gamma$ and $\alpha=1=\gamma-1$ we get
\begin{align}
\label{ineq:interp1}\|\rho^n(t)\|_{L^{2}} &= \|m_0(f^n)(t)\|_{L^{2}}\leq  C_{\text{in}} M_2(f^n)(t)^{1/2},\\
\label{ineq:interp2}\|j^n(t)\|_{L^{4/3}} &\leq \|m_1(f^n)(t)\|_{L^{4/3}}\leq  C_{\text{in}} M_2(f^n)(t)^{3/4}.
\end{align}
Using H\"older inequality and \eqref{ineq:rhol1}--\eqref{ineq:interp1} we get
\begin{align*}
\|\rho^n(t)\|_{L^{4/3}} \leq \sqrt{\|\rho^n(t)\|_{L^1}} \sqrt{\|\rho^n(t)\|_{L^2}} \leq \CinT M_2(f^n)(t)^{1/4}.
\end{align*}
Thanks to the Hardy-Littlewood-Sobolev inequality we get 
\begin{align}
\label{ineq:hardysob}\|u^n(t)\|_{L^4} \leq C \|\rho^n(t) + \omega^n(t)\|_{L^{4/3}} \leq C_{\text{in}}^T[1+M_2(f^n)(t)^{1/4}].
\end{align}
Now, multiplying the kinetic equation by $|\xi|^2$ and integrating in space-velocity gives
\begin{align*}
\frac{d}{dt}M_2(f^n)(t) = -2 \int_{\R^2}j^n(t,x) \cdot {u^n}^\perp (t,x)dx\leq 2 \|j^n(t)\|_{L^{4/3}} \|u^n(t)\|_{L^4},
\end{align*}
and hence, using \eqref{ineq:interp2}, 
\begin{align*}
4\frac{d}{dt}\Big[M_2(f^n)(t)^{1/4}\Big]=M_2(f^n)(t)^{-3/4}\frac{d}{dt} M_2(f^n)(t) &\leq \CinT \|u^n(t)\|_{L^4 _x},
\end{align*}
so we get 
\begin{align*}
 M_2(f^n)(t) &\leq \Big[M_2(\fin)^{1/4}+ \fin\int_0^t \|u^n(s)\|_{L^4} ds\Big]^4\\
&\leq \CinT\Big[1+ \int_0^t \|u^n(s)\|_{L^4}^4 ds\Big],
\end{align*}
and using \eqref{ineq:hardysob} we have finally 
\begin{align*}
 M_2(f^n)(t) &\leq \CinT\Big[1+ \int_0^t  M_2(f^n)(s) ds\Big],
\end{align*}
which, \emph{via} a linear Gr\"onwall lemma, allows us to conclude that $(M_2(f^n))_{n\in\N}$ is bounded in $L^\infty([0,T])$. Using \eqref{ineq:interp1}, \eqref{ineq:interp2} and \eqref{ineq:hardysob} we hence deduce the boundedness of $(\rho^n)_{n\in\N}$, $(j^n)_{n\in\N}$ and $(u^n)_{n\in\N}$ respectively in $L^\infty_t(L^2_x)$, $L^\infty_t(L^{4/3}_x)$ and $L^\infty_t(L^4_x)$.

Now \eqref{eq:approx1} gives $\partial_t \omega^n = - \text{div}_x(u^n \omega^n)$, and because $(\omega^n)_{n\in\N}$ is bounded in $L^\infty_t(L^{4/3}_x)$, we have that $(\partial_t \omega^n)_{n\in\N}$ is bounded in $L^\infty_t(\mathscr{C}^1_c(\R^2)')\hookrightarrow L^\infty_t(H^{-m}_{x,\text{loc}})$ for $m$ large enough. Analogously, we have $\partial_t \rho^n = - \text{div}_x j^n$ and hence $(\partial_t \rho^n)_{n\in\N}$ is bounded in the same space $L^\infty_t(H^{-m}_{x,\text{loc}})$. Now $\cdot \star \nabla^\perp G$ sends $H^{-m}_{\text{loc}}(\R^2)$ in another $H^{-q}_{\text{loc}}(\R^2)$, so that we have finally $(\partial_t u^n)_{n\in\N}$ is bounded in $L^\infty_t(H^{-q}_{x,\text{loc}})$. But, from Calderon-Zygmund theory we know that $\| \nabla u^n(t)\|_{L^{4/3}_x} \leq C \|\rho^n(t)+\omega^n(t)\|_{L^{4/3}} \leq \CinT$,  and since $(u^n)_{n\in\N}$ is bounded in $L^\infty_t(L^4_x)$, it is also bounded in $L^\infty_t(W^{1,4/3}_{x,\text{loc}})$. We hence apply Aubin's lemma to get (up to an extraction) the strong convergence of $(u^n)_{n\in\N}$ in $L^\infty_t(L^{p}_{x,\text{loc}})$ for all $p<4$. At this stage, we cannot use a weak/strong type argument to pass to the limit in both equation: the bounds \eqref{ineq:flinfty} and \eqref{ineq:omegal43} insures that up to an extraction $(f^n)_{n\in\N}$ and $(\omega^n)_{n\in\N}$ are weakly converging respectively to some $f$ and $\omega$ in $L^\infty_{t}(L^\infty_{x,\xi})-\star$ and $L^\infty_t(L^{4/3}_x)-\star$, but $(u^n)_{n\in\N}$ does not \emph{a priori} converge in $L^1_t(L_x^4)$ so that we cannot directly pass to the limit in the nonlinear term of \eqref{eq:approx1}. However it follows from the  stability part of \cite{Dipernalions} that  $(\omega^n)_{n\in\N}$ and $(f^n)_{n\in\N}$ respectively converge in $\mathscr{C}^0_t(L_x^{4/3})$ and $\mathscr{C}^0_t(L_{x,\xi}^p)$, for all $p\in[1,\infty[$. We hence pass to the limit in both equations \eqref{eq:approx1}--\eqref{eq:approx2} and obtain by the same time a weak formulation including this time the initial data. This concludes the proof. %
\end{proof}

\subsection{Classical solutions}
In the spirit of \cite{degond} we construct regular solutions to our system, taking some regular initial data, with a kinetic phase having sufficient decreasing at infinity in the velocity variable. More precisely we have the following result:
\begin{Theorem}\label{thm:deg:strong}
Assume $\omega_0\in W^{1,1}(\R^2)\cap W^{1,\infty}(\R^2)$ and $f_0\in W^{1,1}(\R^2\times\R^2)$ (the latter being positive), such as 
\begin{align}\label{strongdecay}
(1+|\xi|^2)^{\gamma/2}(|f_0| + |Df_0|) \in L^\infty(\R^2\times\R^2),
\end{align}
for some real number $\gamma>2$. Let $T>0$. Then there exists a classical solution 
\begin{align*}
(\omega,f) \in L^\infty([0,T];W^{1,1}(\R^2)) \times L^\infty([0,T]; W^{1,1}(\R^2\times\R^2))
\end{align*}
to the equations \eqref{VE1}-\eqref{VE3} satisfying furthermore
\begin{align*}
(1+|\xi|^2)^{\gamma/2}(|f_0| + |Df_0|) \in L^\infty([0,T]; L^\infty(\R^2\times\R^2)),
\end{align*}
\end{Theorem}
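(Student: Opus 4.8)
The plan is to follow the approximation strategy of the proof of Theorem~\ref{PasLoep}. I would first regularize the singular Biot--Savart kernel into $H_n := H \star \varphi_n \in W^{1,\infty}(\R^2)$ with $H_n(0)=0$, approximate the data by smooth compactly supported $(\omega_0^n, f_0^n)$ keeping uniform control of $\|\omega_0^n\|_{W^{1,1}\cap W^{1,\infty}}$ and of the weighted norm in \eqref{strongdecay} of $f_0^n$, and obtain from Theorem~\ref{MFR} a global smooth solution $(\omega^n, f^n)$ of the mollified system, with velocity $u^n := K_n[\omega^n+\rho^n]$, $K_n[g]=H_n\star g$, which is globally Lipschitz so that the method of characteristics applies. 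The two transport fields, $u^n$ for $\omega^n$ and $(\xi,(\xi-u^n)^\perp)$ for $f^n$, are divergence-free in their respective variables (note $\div_\xi(\xi-u)^\perp = 0$), so $\omega^n$ and $f^n$ are pushforwards of the data along their flows; in particular all Lebesgue norms are conserved, which recovers \eqref{ineq:flinfty}--\eqref{ineq:rhol1} at this level. The goal is then to prove bounds \emph{uniform in $n$} in the spaces of the statement, strong enough to pass to the limit.

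The heart of the matter is the propagation of the weighted control \eqref{strongdecay}, and here the gyroscopic structure is decisive. Writing the velocity characteristic as $\dot\Xi = (\Xi - u^n)^\perp$, perpendicularity gives
\begin{align*}
\frac{1}{2}\frac{d}{dt}|\Xi|^2 = \Xi\cdot(\Xi-u^n)^\perp = -\,\Xi\cdot (u^n)^\perp \leq |\Xi|\,\|u^n(t)\|_{L^\infty},
\end{align*}
so the velocity variable grows at most \emph{linearly}: $|\Xi(t)| \leq |\xi| + \int_0^t \|u^n(s)\|_{L^\infty}\,ds$. As $f^n$ is transported, this yields $(1+|\Xi(t)|^2)^{\gamma/2} f^n(t,X(t),\Xi(t)) \leq C_T\,(1+|\xi|^2)^{\gamma/2} f_0^n(x,\xi)$ with $C_T$ controlled by $\int_0^T\|u^n\|_{L^\infty}$, i.e. the weighted $L^\infty$ bound on $f^n$ propagates with a finite constant. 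Because $\gamma>2$, the weight is integrable in the plane, and therefore
\begin{align*}
\rho^n(t,x) = \int_{\R^2} f^n\,d\xi \leq C\,\big\|(1+|\xi|^2)^{\gamma/2}f^n(t)\big\|_{L^\infty}, \qquad \nabla_x\rho^n = \int_{\R^2}\nabla_x f^n\,d\xi,
\end{align*}
are bounded in $L^\infty_x$ and, after integration, in $L^1_x$. This is exactly the role of $\gamma>2$: it converts the weighted decay of $f^n$ and $Df^n$ into $W^{1,1}\cap W^{1,\infty}$ control of $\rho^n$, and it provides the uniform integrability in $\xi$ needed later to identify $\rho$ and $j$ in the limit.

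It remains to close the loop through the Biot--Savart law. With $\omega^n \in W^{1,1}\cap W^{1,\infty}$ (propagated from $\omega_0^n$) and $\rho^n \in W^{1,1}\cap W^{1,\infty}$, interpolation places $\omega^n+\rho^n$ in $W^{1,p}(\R^2)$ for every $p$; applying the Calder\'on--Zygmund estimate to the \emph{unmollified} kernel and using that $\|(\nabla K[g])\star\varphi_n\|_{L^\infty}\leq \|\nabla K[g]\|_{L^\infty}$ gives $\|\nabla u^n\|_{L^\infty}\leq C\,\|\omega^n+\rho^n\|_{W^{1,p}}$ for a fixed $p>2$, uniformly in $n$ (this endpoint $L^\infty$ gradient bound, impossible directly at $L^\infty$ for $\omega^n+\rho^n$, is the reason one needs the extra $\xi$-decay). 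Equipped with $\nabla u^n\in L^\infty$, I would differentiate the two flows: $\nabla_x f^n$ and $\nabla_\xi f^n$ obey, along characteristics, a linear system whose coefficient matrix is built from $\nabla u^n$, so a Gr\"onwall argument propagates both the weighted $W^{1,\infty}$ bound \eqref{strongdecay} of $Df^n$ and the $W^{1,1}$ bound of $f^n$, while $\nabla u^n\in L^\infty$ likewise propagates the $W^{1,1}\cap W^{1,\infty}$ bound of $\omega^n$. These estimates must be closed \emph{simultaneously}: this coupled Gr\"onwall, together with the endpoint velocity-gradient bound, is the main obstacle. The linear-in-time growth of the velocity variable guarantees the absence of finite-time blow-up, so the bounds hold on $[0,T]$ for arbitrary $T$.

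Finally, the uniform bounds furnish the compactness to pass to the limit, exactly as in the proof of Theorem~\ref{PasLoep}: an Aubin--Lions argument gives strong convergence of $u^n$ on compact sets, while $\omega^n$, $f^n$ and their first derivatives converge weakly-$\star$ in the relevant spaces, and the uniform $\xi$-decay makes $\rho^n$, $j^n$ converge to $\rho=\int f\,d\xi$, $j=\int f\xi\,d\xi$. This is enough to identify the nonlinear terms and recover a solution $(\omega,f)$ of \eqref{VE1}--\eqref{VE3} with $\omega\in L^\infty_tW^{1,1}_x$ and $f\in L^\infty_tW^{1,1}_{x,\xi}$. Lower semicontinuity of the weighted $L^\infty$ norm under weak-$\star$ limits transfers \eqref{strongdecay} to $(\omega,f)$, and the $W^{1,\infty}$ control of $\omega$ and $\rho$ renders $\nabla u$ continuous, so the limiting solution is classical.
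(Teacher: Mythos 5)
Your overall architecture is the one the paper uses (both follow Degond's scheme \cite{degond}): regularize the kernel, propagate a weighted $L^\infty$ bound on $f^n$ using the fact that the gyroscopic force makes the velocity characteristic grow only linearly, convert the $\xi$-decay (with $\gamma>2$) into $L^\infty$ control of $\rho^n$, then differentiate the flows and close a coupled Gr\"onwall for the first derivatives before passing to the limit. The zeroth-order part of your argument is essentially the paper's Lagrangian rephrasing of the equation for $Y^n=(1+|\xi|^2)^{\gamma/2}f^n$, and the circularity between $\|u^n\|_{L^\infty}$ and the weighted bound does close, because $\|u^n\|_{L^\infty}\lesssim \|\rho^n+\omega^n\|_{L^1}^{1/2}\|\rho^n+\omega^n\|_{L^\infty}^{1/2}$ depends only sublinearly on $\|Y^n\|_{L^\infty}$.

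There is, however, a genuine gap at the step you yourself flag as ``the main obstacle,'' and it is not repaired by the remark about linear growth of the velocity variable. You bound $\|\nabla u^n\|_{L^\infty}\leq C\|\omega^n+\rho^n\|_{W^{1,p}}$ for some fixed $p>2$; this estimate is true, but it is \emph{linear} in the very gradient quantities $\|\nabla_x\omega^n\|_{L^p}+\|\nabla_x\rho^n\|_{L^p}$ that you are trying to propagate. Since each of these satisfies a transport-type inequality of the form $y'\leq \|\nabla u^n\|_{L^\infty}\,y$, substituting your velocity-gradient bound produces a Riccati inequality $y'\leq C y^2$ (up to lower-order terms), whose a priori bound blows up in finite time. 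The linear growth of $|\Xi(t)|$ controls moments and weights, not gradients, so it cannot prevent this; your scheme therefore yields at best a local-in-time classical solution, whereas the theorem asserts existence on an arbitrary $[0,T]$. The paper avoids this by replacing the endpoint Calder\'on--Zygmund/Sobolev bound with a logarithmic estimate \`a la Beale--Kato--Majda,
\begin{align*}
\|\nabla_x u^n\|_{L^\infty} \leq C\big[1+\ln\{1+\|\nabla_x\rho^n\|_{L^\infty}+\|\nabla_x\omega^n\|_{L^\infty}\}\big],
\end{align*}
valid because $\rho^n+\omega^n$ is already bounded in $L^1\cap L^\infty$ by the zeroth-order step. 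This turns the coupled inequality for $\alpha_n:=\|Z^n\|_{L^\infty}+\|W^n\|_{L^\infty}$ into $\alpha_n'\leq C\alpha_n(1+\ln\alpha_n)$, which is of Osgood type and integrates to a (double-exponential) bound on all of $[0,T]$. Inserting this logarithmic estimate in place of your $W^{1,p}$ bound is the missing idea; the rest of your plan (the $L^1$ bounds on $\nabla_{x,\xi}f^n$, the compactness and passage to the limit, and the lower semicontinuity of the weighted norm) then goes through as you describe.
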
 
\begin{Remark}
\label{rem:deg} Taking $W^{1,m}$ initial data and replacing \eqref{strongdecay} by a similar assumption taking into account all the $m$-th first derivatives lead to even more regular solution. This is done in \cite{degond}, the proof being quite close to the case $m=1$.  
\end{Remark}
As we follow closely the method of \cite{degond} we will only provide a sketch of proof. More precisely
we will only prove some appropriate a priori bounds for the  approximations given, as in the previous section, by \eqref{eq:approx1}--\eqref{eq:approx4}.
\begin{proof}
We consider again the solutions of \eqref{eq:approx1}--\eqref{eq:approx4} and we define as in \cite{degond}
\begin{align*}
Y^n(t,x,\xi) := (1+|\xi|^2)^{\gamma/2}f^n(t,x,\xi), 
\end{align*}
we have, using \eqref{eq:approx3}
\begin{align*}
\partial_t Y^n + \xi\cdot \nabla_x Y^n + (\xi-u^n)^\perp \cdot \nabla_\xi Y^n = \gamma  {u^n}^\perp \cdot \xi (1+|\xi|^2)^{(\gamma-2)/2}f^n,
\end{align*}
which is just a classical transport (Vlasov) equation with a source term. We hence have, thanks to the assumption on the initial data
\begin{align*}
\|Y^n(t)\|_{L^\infty} \leq \|f_0 (1+|\xi|^2)^{\gamma/2}\|_{L^\infty} + \gamma \int_0^t \| (1+|\xi|^2)^{(\gamma-1)/2}f^n(s)\|_{L^\infty} \|u^n(s)\|_{L^\infty} d s.
\end{align*}
But $u^n:=K_n[\rho^n+\omega^n]$ so that 
\begin{align*}
\|u^n(s)\|_{L^\infty} &\leq C \|\rho^n(s)+\omega(s)\|_{L^1}^{1/2}\|\rho^n(s)+\omega^{n}(s)\|_{L^\infty}^{1/2}\\
&\leq \big[\|\rho_0\|_{L^1}+ \|\omega_0\|_{L^1}\big]^{1/2} \big[\|\rho^n(s)\|_{L^\infty}+\|\omega_0\|_{L^\infty}\big]^{1/2},
\end{align*}
where we used for the first inequality a classical interpolation estimate true for the Biot-Savart operator and hence for its approximation $K_n$, uniformly in $n$. The second inequality is a direct consequence of equations \eqref{eq:approx1} and \eqref{eq:approx3} and the definition of $f_0^n$ and $\omega_0^n$ as regularization of the initial data. We hence use, as in \cite{degond} (see appendix) the interpolation estimate
\begin{align*}
\|\rho^n(s)\|_{L^\infty} \leq C_\gamma \|f^n(s)\|_{L^\infty}^{(\gamma-2)/\gamma}\|Y^n(s)\|_{L^\infty}^{2/\gamma}.
\end{align*}
We have also
\begin{align*}
\| (1+|\xi|^2)^{(\gamma-1)/2}f^n(s)\|_{L^\infty} \leq C_\gamma \|f^n(s)\|_{L^\infty}^{1/\gamma} \|Y^n(s)\|_{L^\infty}^{(\gamma-1)/\gamma},
\end{align*}
so that we have finally a Gr\"onwall estimate
\begin{align*}
\|Y^n(t)\|_{L^\infty} \leq C\left[ 1  + \int_0^t \|Y^n(s)\|_{L^\infty} ds \right].
\end{align*}
We have hence at this stage $L^\infty_{t,x}$ bounds for $(Y^n)_{n}$, $(\rho^n)_{n}$ and $(u^n)_n$.
Now introduce, again as in \cite{degond} 
\begin{align*}
Z^n(t,x,\xi) := (1+|\xi|^2)^{\gamma/2}\nabla_{x,\xi} f^n(t,x,\xi).
\end{align*}
We have the vectorial equality
\begin{align}
\label{eq:vectZ}\partial_t  \nabla_{x,\xi}  f^n +\xi \cdot  \nabla_x  \nabla_{x,\xi}  f^n +(\xi-u^n)^\perp \cdot \nabla_\xi  \nabla_{x,\xi}  f^n = A^n  \nabla_{x,\xi} f^n,
\end{align}
where $A^n\in\mathscr{M}_6(\R)$ is the matrix
\begin{align*}
\begin{pmatrix}
0_3 & \nabla_x {u^n}^\perp \\
-\textnormal{Id}_3 &  -\nabla_{\xi} \xi^\perp 
\end{pmatrix},
\end{align*}
and hence, 
\begin{align*}
\partial_t Z^n + \xi\cdot \nabla_x Z^n + (\xi-u^n)^\perp \cdot \nabla_\xi Z^n = \gamma  {u^n}^\perp \cdot \xi  (1+|\xi|^2)^{(\gamma-2)/2} \nabla_{x,\xi} f^n + A^n Z^n.
\end{align*}
Using first a logarithmic estimate \`a la Beale-Kato-Majda, and then the previous bounds, we get:
\begin{align*}
\|\nabla_x u^n\|_{L^\infty} &\leq C\Big[1+ \|\rho^n(s)+\omega^n(s)\|_{L^\infty}(1+\ln\big\{1+\|\nabla_x\rho^n(s)+\nabla_x\omega^n(s)\|_{L^\infty}\big\}) +\|\rho^n(s)+ \omega^n(s)\|_{L^1}\Big]\\
&\leq C \big[1 + \ln\big\{1+\|\nabla_x\rho^n(s)\|_{L^\infty}+\|\nabla_x\omega^n(s)\|_{L^\infty}\big\}\big]
\end{align*}
In the same way
\begin{align*}
W^n(t,x) := \nabla_x \omega^{n}(t,x),
\end{align*}
verifies
\begin{align*}
\partial_t W^n + u^n \cdot \nabla_x W^n = - \nabla_x u^n W^n,
\end{align*}
so that we get eventually for some constant $C$, letting $\alpha_n(t):=\|Z^n(t)\|_{L^\infty} + \|W^n(t)\|_{L^\infty}$, 
\begin{align*}
\alpha_n(t) \leq C\Big[1+ \int_0^t \alpha_n(s) \big\{1+\ln \alpha_n(s)\big\}ds\Big], 
\end{align*}
which classically implies a bound for $(\alpha_n)_n$ in $L^\infty_t$, and hence a bound in $L^\infty_{t}(L^\infty_{x,\xi})$ for the sequence of matrices $(A^n)_n$. Integrating \eqref{eq:vectZ} and using the last bound we have also
\begin{align*}
\|\nabla_{x,\xi} f^n(t) \|_{L^1} \leq C\Big[1+ \int_0^t \|\nabla_{x,\xi} f^n(s) \|_{L^1} ds \Big],
\end{align*}
that is $(\nabla_{x,\xi}f^n)_n$ bounded in $L^\infty_t(L^1_{x,\xi})$. From all the previous bounds we get (up to several extractions) convergence in $L^\infty-w\star$ frame. 
From this the proof of Theorem \ref{thm:deg:strong} 
 follows exactly the same lines than Theorem $1.1$ in \cite{degond}.
\end{proof}
\section{A uniqueness result }
\label{Younik}
Let us give here a uniqueness result which extends the result \cite{Yudovich} by Yudovich about the incompressible Euler system and the result \cite{Loeper} by Loeper about the Vlasov-Poisson system.

\begin{Theorem}
\label{loep}
If  $\omega_0 $ is in  $(L^\infty \cap L^1)  ( \R^{2})$ and $f_0$ is in  $\mathcal{M}_{2}^+ ( \R^{2} \times  \R^{2})$  then, given $T>0$ there exists at most one
weak solution 
 $$( \omega , f) \in \mathscr{C}^0\Big(  [0,T];  (L^\infty \cap L^1) ( \R^{2}) -w\star \times \mathcal{M}^{+}  ( \R^{2} \times  \R^{2}) \Big) $$
  to the equations \eqref{VE1}-\eqref{VE3} such that  $ \rho$ is in  $L^\infty ( (0,T)  \times  \R^{2} )$. 
\end{Theorem}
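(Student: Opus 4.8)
The plan is to adapt Loeper's optimal-transport uniqueness argument \cite{Loeper}, exploiting the divergence-free, orthogonal structure of the coupling. Let $(\omega_1,f_1)$ and $(\omega_2,f_2)$ be two solutions issued from the same initial data, with $u_i:=K[\omega_i+\rho_i]$. Since $\omega_i\in L^\infty\cap L^1$ and $\rho_i\in L^\infty$, the density $\sigma_i:=\omega_i+\rho_i$ lies in $L^1\cap L^\infty$ uniformly in $t$, so by the classical Yudovich theory \cite{Yudovich} each $u_i$ is log-Lipschitz in space, uniformly on $[0,T]$. First I would justify that each weak solution is transported by its characteristics: the vorticity flow $\dot X_i=u_i(t,X_i)$, $X_i(0)=\mathrm{id}$, and the kinetic flow $(\dot Z_i,\dot V_i)=\big(V_i,(V_i-u_i(t,Z_i))^{\perp}\big)$, $(Z_i,V_i)(0)=\mathrm{id}$, the latter phase-space field being divergence-free. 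Because $u_i$ is only log-Lipschitz, these flows are well defined through the Osgood uniqueness criterion, and one has $\omega_i(t)=X_i(t)_{\#}\omega_0$ and $f_i(t)=(Z_i,V_i)(t)_{\#}f_0$, the Jordan decomposition of $\omega_0$ being transported since $X_i(t)$ is a homeomorphism.

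Then I would introduce the Loeper functional
$$Q(t):=\int_{\R^2}|X_1-X_2|^2\,d|\omega_0|+\int_{\R^2\times\R^2}\big(|Z_1-Z_2|^2+|V_1-V_2|^2\big)\,df_0.$$
Coupling the two solutions through the common initial measure shows that $Q(t)$ dominates the relevant transport distances: $W_2(\omega_1^{\pm}(t),\omega_2^{\pm}(t))^2\le\int|X_1-X_2|^2\,d\omega_0^{\pm}$, $W_2(f_1(t),f_2(t))^2\le Q(t)$, and, via the marginal inequality of Lemma \ref{margi}, $W_2(\rho_1(t),\rho_2(t))^2\le Q(t)$; altogether $W_2(\sigma_1(t),\sigma_2(t))^2\le C\,Q(t)$ in the sense of Definition \ref{WassersteinSigne}. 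The goal is then a differential inequality for $Q$ forcing $Q\equiv 0$.

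Differentiating $Q$ produces three types of contributions. In the velocity-variable term of the kinetic flow, the part coming from $(V_1-V_2)^{\perp}$ drops out by orthogonality, since $(V_1-V_2)\cdot(V_1-V_2)^{\perp}=0$; this is the structural gain of the gyroscopic coupling, which removes the otherwise dangerous quadratic-in-velocity term. The surviving pieces all reduce to estimating $u_1-u_2$ along the flows, and I would split each such difference as $[u_1(A_1)-u_1(A_2)]+[(u_1-u_2)(A_2)]$. The first bracket is handled by the log-Lipschitz modulus of $u_1$ together with Jensen's inequality against the probability measures $d|\omega_0|/|\omega_0|(\R^2)$ and $df_0$, producing a contribution bounded by $C\,Q(t)\big(1+\log^{+}\!\big(1/Q(t)\big)\big)$. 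For the second bracket, pushing forward by $X_2$ (respectively by $(Z_2,V_2)$) turns the integral into $\int|u_1-u_2|^2\,d|\omega_2(t)|$ (respectively $\int|u_1-u_2|^2\,d\rho_2(t)$), and here the boundedness of $\omega_2$ and of $\rho_2$ in $L^\infty$ is exactly what allows me to bound these by $C\,\|u_1-u_2\|_{L^2}^2$.

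The key analytic ingredient is Loeper's $L^2$-stability estimate for the Biot-Savart kernel, $\|u_1-u_2\|_{L^2(\R^2)}\le C\max\big(\|\sigma_1\|_{L^\infty},\|\sigma_2\|_{L^\infty}\big)^{1/2}\,W_2(\sigma_1,\sigma_2)$, which I would establish for signed densities through the decomposition of Definition \ref{WassersteinSigne}. Combined with $W_2(\sigma_1,\sigma_2)^2\le C\,Q(t)$, this controls the second-bracket contributions by $C\,Q(t)$. Assembling all terms yields $Q'(t)\le C\,Q(t)\big(1+\log^{+}\!\big(1/Q(t)\big)\big)$ with $Q(0)=0$; since $r\mapsto r\big(1+\log^{+}(1/r)\big)$ is an Osgood modulus, the Osgood lemma forces $Q\equiv 0$ on $[0,T]$, whence $(\omega_1,f_1)=(\omega_2,f_2)$. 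The main obstacle is twofold: transferring Loeper's $L^2$ velocity estimate to signed vorticities, and rigorously justifying the flow representation and the differentiation of $Q$ despite the merely log-Lipschitz velocity field.
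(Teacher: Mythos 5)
Your proposal is correct and follows essentially the same route as the paper: the flow representation via the log-Lipschitz (Osgood) theory, the quadratic Loeper functional built on the common initial measures, the orthogonality cancellation of the $(V_1-V_2)^\perp$ term, the splitting of $u_1-u_2$ along the characteristics with Jensen's inequality against the concave function $r\mapsto r(\ln r)^2$ on one side and Loeper's $W_2\to L^2$ kernel estimate (extended to signed measures via the Jordan decomposition) on the other, concluding by the Osgood lemma. The only cosmetic difference is that the paper keeps the fluid and kinetic functionals $\widetilde Q$ and $Q$ separate before summing them at the end, which changes nothing.
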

Note that  Theorem \ref{loep} applies to the case of mono-kinetic solutions, that is for a density $f$ of the form given in Section \ref{Hydro}.
The proof of  Theorem \ref{loep} follows closely the method of \cite{Loeper}. Yet we provide a proof for sake of completeness.
\begin{proof}
Let be given two solutions $ ( \omega_1 , f_1) $ and $ ( \omega_2 , f_2) $ of  the equations   \eqref{VE1}-\eqref{VE3} such that  $ \rho_1 :=m_0(f_1) $ and $ \rho_2 :=  m_0(f_2) $  are  in $L^\infty ((0,T)  \times  \R^{2})$. 
Accordingly we define $u_{i} := K \lbrack \omega_{i} + \rho_{i} \rbrack $, for $i=1,2$.\vspace{2mm}\\
Recall the following classical result (see for example  \cite{CheminSMF}).
\begin{Proposition}\label{propdefKwhole}
Let  $g \in L^\infty (\R^2) \cap L^1 (\R^2)$.
Then $K [g] $ is  bounded and   Log-Lipschitz  on $\R^2$, that is  there exists $C>0$ such that $\|   K [g]  \|_{L^\infty}  \leq C $, and such that for any $(x,y) \in  \R^2 \times    \R^2  $ with $  | x-y | \leq \frac{1}{2}  $, there holds $ |   K [g]  (x) -  K [g]  (y)    |  \leq - C | x-y |  \ln  | x-y |  $.%
\end{Proposition}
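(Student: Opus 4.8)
The plan is to establish both assertions by splitting the convolution integral according to the distance to the singularity of $H$, using only the pointwise bounds $|H(x)| = \frac{1}{2\pi|x|}$ and $|\nabla H(x)| \le C|x|^{-2}$ away from the origin, both of which follow from the explicit formula \eqref{NBS} (indeed $H$ is homogeneous of degree $-1$, so $\nabla H$ is homogeneous of degree $-2$). For the boundedness I would write $K[g](x) = \int_{\R^2} H(x-z) g(z)\,dz$ and cut the domain at $|x-z|=1$: on the inner ball the kernel is singular but integrable, with $\int_{|w|\le 1}|H(w)|\,dw = 1$, so this part is controlled by $\|g\|_{L^\infty}$; on the complementary region $|H(x-z)| \le \frac{1}{2\pi}$, so it is controlled by $\frac{1}{2\pi}\|g\|_{L^1}$. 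This yields $\|K[g]\|_{L^\infty} \le \|g\|_{L^\infty} + \frac{1}{2\pi}\|g\|_{L^1}$.

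For the Log-Lipschitz estimate, fix $x,y$ with $\delta := |x-y| \le \frac{1}{2}$ and write $K[g](x) - K[g](y) = \int_{\R^2} \big[H(x-z) - H(y-z)\big] g(z)\,dz$. The idea is to split the $z$-integral into three regions: the near region $|x-z| \le 2\delta$, an intermediate annulus $2\delta < |x-z| \le 1$, and the far region $|x-z| > 1$. On the near region I estimate $H(x-z)$ and $H(y-z)$ separately using the integrability of $H$; since $|x-z| \le 2\delta$ forces $|y-z| \le 3\delta$, this region contributes at most $(2\delta + 3\delta)\|g\|_{L^\infty}$, a harmless term of order $\delta$. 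On the two remaining regions, where $|x-z| > 2\delta$, the whole segment $[x,y]$ stays at distance $\ge \frac{1}{2}|x-z|$ from $z$, so the mean value inequality combined with $|\nabla H(w)| \le C|w|^{-2}$ gives $|H(x-z) - H(y-z)| \le C\delta |x-z|^{-2}$.

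The logarithm then emerges from the intermediate annulus, where $\int_{2\delta < |w| \le 1} |w|^{-2}\,dw = 2\pi \ln\frac{1}{2\delta}$, so this region contributes at most $C\delta(-\ln(2\delta))\|g\|_{L^\infty}$; the far region contributes at most $C\delta\|g\|_{L^1}$ since $|x-z|^{-2} \le 1$ there. Collecting the three pieces and using that on $(0,\tfrac12]$ one has $-\ln\delta \ge \ln 2 > 0$, so that both $\delta$ and $\delta(-\ln(2\delta))$ are dominated by $-\delta\ln\delta$, I obtain a bound of the announced form $|K[g](x) - K[g](y)| \le -C|x-y|\ln|x-y|$ with $C$ depending only on $\|g\|_{L^\infty}$ and $\|g\|_{L^1}$.

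The main obstacle here is essentially bookkeeping rather than conceptual: one must choose the splitting radii so that both singularities (at $x-z=0$ and at $y-z=0$) are absorbed into the near region while the gradient bound stays valid on its complement, and then verify that every $O(\delta)$ remainder is swallowed by the leading $-\delta\ln\delta$ term on the admissible range $\delta \le \frac{1}{2}$.
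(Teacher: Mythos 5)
Your argument is correct: the splitting at radii $2\delta$ and $1$, the integrability of $H$ near the origin (with $\int_{|w|\le r}|H|=r$), the mean value inequality on the region where the whole segment $[x,y]$ stays at distance $\ge \tfrac12|x-z|$ from $z$, and the absorption of the $O(\delta)$ remainders into $-\delta\ln\delta$ all check out. The paper itself gives no proof of Proposition~\ref{propdefKwhole} --- it is quoted as a classical result with a reference to \cite{CheminSMF} --- and your decomposition is precisely the standard argument found there, so there is nothing to compare beyond noting that you have supplied the omitted details correctly.
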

It therefore follows from the assumptions of Theorem \ref{loep} that  the vector fields $u_{1}$ and   $u_{2}$ are bounded and Log-Lipschitz  on $\R^2$, uniformly in time, and so are the two vector fields, for $i=1,2$ 
\begin{align*}
U^{i} : [0,T] \times \R^2 \times \R^2  &\longrightarrow \R^{2}  \times \R^2 \\
(t,x,\xi) &\longmapsto  \Big(\xi , (\xi- u_{i} (t,x) )^{\perp} \Big).
\end{align*}
 Because of the mentioned regularity, referring again to \cite{CheminSMF}, we can define, for $i=1,2$, two unique continuous flow map $ \phi^{i} :[ 0,T]  \times \R^{2}$, and $\Sigma^i:[0,T]\times\R^2\times\R^2 \rightarrow\R^2\times\R^2$ such that
  \begin{align}
\label{flow1}
\phi^i_t(x):=\phi^{i} (t,x)  &=  x + \int^{t}_0 u_{i} (s, \phi^{i } (s,x) ) ds \\
\label{flow2}  \Sigma^{i}_t(x,\xi):=\Sigma^i(t,x,\xi) &= (x,\xi) + \int_0^t U^i(s,\Sigma^i(s,x,\xi)) ds.
  \end{align}
For $i=1,2$, define two vector fields $X^i,\Xi^i$, defined on $[0,T]\times\R^2\times\R^2$ with values in $\R^2$ such as $\Sigma^i=(X^i,\Xi^i)$ that is to say that $X^i$ and $\Xi^i$ are respectively the characteristic curves in the space and velocity variable. Denote as before $X^i_t(x,\xi):=X^i(t,x,\xi)$ and $\Xi^i_t(x,\xi):=\Xi^i(t,x,\xi)$ for all $(t,x,\xi)\in[0,T]\times\R^2\times\R^2$.\vspace{2mm}\\
We use the following result cf. Theorem $1.2$ in \cite{bc} or  \cite{AmBe} for a rather different proof.
\begin{Theorem}[] 
\label{Bahou}
Let $ \mathcal{V}(t,x) : [0,T] \times \R^d \rightarrow \R^d$ be a Borel vector field such that there exists $C \in L^1 (0,T)$ such that for all $t$ in $[0,T] $,
$\|    \mathcal{V}(t)   \|_{L^\infty}  \leq C(t)  $, and for any $(x,y) \in  \R^2 \times    \R^2$ with  $  | x-y | \leq \frac{1}{2} $,  $|  \mathcal{V}(t,x) -  \mathcal{V}(t,y)         |  \leq - C(t) | x-y |  \ln  | x-y |  $.
Let $\mu_0 \in \mathcal{M} ( \R^d) $.
Then there exists only one weakly continuous curve $\mu_t $ in $\mathscr{C}^0([0,T] ;  \mathcal{M} ( \R^d) \,  -w\star)$ solution of 
$\partial_t \mu_t + \div_x (\mu_t \mathcal{V} ) = 0 $.
This solution is given by 
$ \mu_t = {\Phi^\mathcal{V}_t}_{\hspace{-0.7mm}\#} \mu_0 $,
where $ \Phi_t^{\mathcal{V}}$ is given by flow associated to $\mathcal{V}$  at time $t$ by the formula 
  $\Phi_t^{\mathcal{V}}(x)  :=\Phi^{\mathcal{V}}(t,x)=  x + \displaystyle \int^{t}_0  \mathcal{V}(s,  \Phi^{\mathcal{V}}  (s,x) ) ds $.
\end{Theorem}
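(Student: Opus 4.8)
The statement is a well-posedness result for the continuity equation driven by a field $\mathcal{V}$ that is bounded and \emph{Log-Lipschitz} in space, with an $L^1$ weight in time, and the natural route is the Lagrangian one. I would first settle the characteristic ODE $\dot X(t) = \mathcal{V}(t,X(t))$. Since $\mathcal{V}(t,\cdot)$ is continuous (Log-Lipschitz functions are uniformly continuous) while $t\mapsto\mathcal{V}(t,x)$ is merely $L^1$, the existence of at least one absolutely continuous integral curve through each point follows from Carathéodory's theorem, and uniqueness follows from the Osgood criterion applied to the modulus $r\mapsto -C(t)\,r\ln r$, whose reciprocal is non-integrable at the origin. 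The bound $\|\mathcal{V}(t)\|_{L^\infty}\leq C(t)$ with $C\in L^1$ rules out blow-up, so the curves are global and define a flow $\Phi^{\mathcal{V}}_t$; the same Osgood estimate shows that $x\mapsto\Phi^{\mathcal{V}}_t(x)$ is a bi-Hölder homeomorphism whose modulus is controlled by $\int_0^t C(s)\,ds$.

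For existence of the measure solution I would set $\mu_t := {\Phi^{\mathcal{V}}_t}_{\#}\mu_0$ and check the weak formulation directly: for $\psi\in\mathscr{D}([0,T)\times\R^d)$,
\begin{align*}
\frac{d}{dt}\int_{\R^d}\psi(t,x)\,d\mu_t(x) = \frac{d}{dt}\int_{\R^d}\psi\big(t,\Phi^{\mathcal{V}}_t(x)\big)\,d\mu_0(x) = \int_{\R^d}\big(\partial_t\psi + \mathcal{V}\cdot\nabla\psi\big)\big(t,\Phi^{\mathcal{V}}_t(x)\big)\,d\mu_0(x),
\end{align*}
the differentiation under the integral being legitimate thanks to the boundedness of $\mathcal{V}$ and the ODE. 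Rewriting the right-hand side as an integral against $\mu_t$ yields the distributional identity $\partial_t\mu_t + \div_x(\mu_t\mathcal{V}) = 0$, and weak continuity in time is immediate from the continuity of the flow.

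The real content is \emph{uniqueness}, and here I would invoke Ambrosio's superposition principle: any finite, weakly continuous distributional solution of the continuity equation with $\mathcal{V}\in L^1_t(L^\infty_x)$ can be written as $\mu_t = (e_t)_\#\eta$, where $e_t$ denotes evaluation at time $t$ and $\eta$ is a measure on continuous paths concentrated on integral curves of $\mathcal{V}$. Because the ODE admits exactly one integral curve through each initial point, by the Osgood argument above, $\eta$ is forced to be the image of $\mu_0$ under the map sending $x$ to its characteristic, which gives $\mu_t = {\Phi^{\mathcal{V}}_t}_{\#}\mu_0$ and hence uniqueness. As the statement concerns \emph{signed} measures, I would reduce to the nonnegative case by applying the representation to $\mu_0^+$ and $\mu_0^-$ separately, or equivalently work with the Hahn-Jordan decomposition of the candidate solution, which is transported by the flow. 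A more hands-on alternative would replace superposition by duality: transporting a smooth final datum backward along the characteristics produces a Log-Lipschitz solution $\psi$ of the dual equation $\partial_t\psi + \mathcal{V}\cdot\nabla\psi = 0$, which one then pairs against $\mu$.

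The main obstacle is exactly this uniqueness step. Since $\mathcal{V}$ is only Log-Lipschitz, neither the flow nor the dual solution is $C^1$, so a flow-adapted test function cannot simply be inserted into the distributional formulation; the duality route therefore demands a regularization and commutator estimate to enlarge the class of admissible test functions from $\mathscr{D}$ to Log-Lipschitz functions, while the superposition route trades this for the standard but substantial machinery of Ambrosio's theorem. In both cases it is the Osgood uniqueness of characteristics that collapses the representing measure onto the single flow, which is precisely the ingredient furnished by \cite{bc,AmBe}.
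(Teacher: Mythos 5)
The paper does not actually prove this statement: it is imported verbatim from the literature (Theorem $1.2$ of \cite{bc}, with \cite{AmBe} cited for ``a rather different proof''), so there is no internal argument to compare against. Your overall architecture --- Osgood/Carath\'eodory well-posedness of the characteristics, existence of the measure solution by pushforward, uniqueness by superposition or by duality --- is indeed the architecture of those references, and the characteristics and existence halves of your sketch are fine.

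The genuine gap is in your treatment of \emph{signed} measures in the uniqueness step. Ambrosio's superposition principle applies to nonnegative measure-valued solutions; to use it you propose to ``reduce to the nonnegative case by applying the representation to $\mu_0^+$ and $\mu_0^-$ separately, or equivalently work with the Hahn--Jordan decomposition of the candidate solution, which is transported by the flow.'' This is circular: for an \emph{arbitrary} weakly continuous signed solution $\mu_t$ there is no a priori reason that $\mu_t^+$ and $\mu_t^-$ separately solve the continuity equation (cancellations between the two parts can occur in the equation without occurring in either part), and the assertion that the Jordan decomposition of the unknown solution is transported by the flow is exactly the conclusion you are trying to reach. This is not a technicality: it is the entire point of \cite{AmBe}, whose subject is precisely uniqueness of \emph{signed} measure solutions for Osgood fields, and for bounded non-Osgood fields signed uniqueness can fail even where nonnegative uniqueness holds. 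Your fallback duality route is the one that actually closes the argument (it is essentially Bahouri--Chemin's), but there the burden you defer --- a regularization and commutator estimate showing that the backward-transported test function, which is only Log-Lipschitz rather than $C^1$, may legitimately be inserted into the distributional formulation --- is the whole proof rather than a routine verification. As written, the proposal identifies the correct strategy and correctly locates the difficulty, but does not contain a proof of the uniqueness assertion.
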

As a consequence, there holds, for $i=1,2$, $t\in[0,T]$,
\begin{eqnarray}
\label{pousse}
 ( \omega_i , f_i,\rho_i)  (t) &=& ( {\phi_{t}^i}_{\#} \omega_{0} , {\Sigma_t^i}_\# f_{0},{X_t^i}_{\#}  f_{0}), \\
\label{pousseprecis} ( \omega_i^{\pm} ,|\omega_i|)  (t) &=& ( {\phi_{t}^i}_{\#} \omega_{0}^{\pm} ,{\phi_{t}^i}_{\#} |\omega_{0}|),
\end{eqnarray}
so that at any time $t\in[0,T]$, $\omega_1(t)$, $f_1(t)$ and $\rho_1(t)$ are respectively compatible with $\omega_2(t)$, $f_2(t)$ and $\rho_2(t)$. For two time-dependent measure (always compatible) we will often use the shorthand $W_2(\nu_1,\nu_2)(t):=W_2(\nu_1(t),\nu_2(t))$.
Finally we introduce  the quantities
\begin{eqnarray*}
Q(t) :=   \int_{\R^2   \times  \R^{2} }  |  \Sigma_t^1  -  \Sigma_t^2   |^2 df_0 ,
\quad \widetilde{Q}(t) :=   \int_{\R^2}  |   \phi_t^1 - \phi_t^2 |^2   d|\omega_0| .
\end{eqnarray*}
Let us now recall the following result cf. \cite{Loeper}:
\begin{Lemma}
Let $\psi_i: \R^d \rightarrow  \R^d$, for $i=1,2$, be two homeomorphisms. 
Let $\nu  \in \mathcal{M}_+ ( \R^d)$ and denote, for $i=1,2$, $\nu_i := {\psi_i}_{\#} \nu$.
Then, for $\nu_{1}$ and $\nu_2$ are compatible and
\begin{eqnarray*}
W_2 (  \nu_{1} ,  \nu_{2} )^2  \leq \int_{\R^d}  |   \psi_1  - \psi_2 |^2   d\mu_0 .
\end{eqnarray*}
\end{Lemma}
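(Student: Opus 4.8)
The plan is to prove the inequality not by computing the infimum defining $W_2$ but by exhibiting a \emph{single} admissible transport plan between $\nu_1$ and $\nu_2$ and bounding its cost. First I would dispose of the compatibility assertion: since $\nu_1$ and $\nu_2$ are pushforwards of one and the same positive measure $\nu\in\mathcal{M}_+(\R^d)$, and pushforward preserves total mass, we have $\nu_1(\R^d)=\nu_2(\R^d)=\nu(\R^d)$; being positive, each $\nu_i$ has vanishing negative part, so $\nu_i^+(\R^d)=\nu(\R^d)$ and $\nu_i^-(\R^d)=0$ for $i=1,2$, which is exactly compatibility. In particular, for $p=2$ the signed distance of Definition \ref{WassersteinSigne} reduces here to the classical $W_2$ of Definition \ref{Wasserstein}, namely the infimum of the quadratic cost over couplings with marginals $\nu_1$ and $\nu_2$.

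The heart of the argument is the choice of coupling. I would introduce the joint map $\Psi:=(\psi_1,\psi_2):\R^d\rightarrow\R^d\times\R^d$, $z\mapsto(\psi_1(z),\psi_2(z))$, which is Borel because $\psi_1$ and $\psi_2$ are continuous, and set $\gamma:=\Psi_\#\nu\in\mathcal{M}^+(\R^d\times\R^d)$. Using the composition rule for pushforwards together with $\pi^1\circ\Psi=\psi_1$ and $\pi^2\circ\Psi=\psi_2$, one checks that $\pi^1_\#\gamma=(\psi_1)_\#\nu=\nu_1$ and $\pi^2_\#\gamma=(\psi_2)_\#\nu=\nu_2$, so $\gamma$ is an admissible transport plan between $\nu_1$ and $\nu_2$.

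It then remains to bound the cost of $\gamma$. By definition of $W_2$ as an infimum over such plans,
\[
W_2(\nu_1,\nu_2)^2 \leqslant \int_{\R^d\times\R^d} |x-y|^2 \, d\gamma(x,y),
\]
and applying the change-of-variables formula \eqref{PF} to the integrand $|x-y|^2$ pulled back through $\Psi$ transforms the right-hand side into $\int_{\R^d}|\psi_1(z)-\psi_2(z)|^2\,d\nu(z)$, which is precisely the claimed bound (so that the $d\mu_0$ appearing in the statement should read $d\nu$). I do not expect a genuine obstacle here: the only real content is recognizing that the graph of $(\psi_1,\psi_2)$ furnishes one coupling whose cost equals the stated integral, and the homeomorphism hypothesis is used solely to ensure that $\Psi$ is Borel so that $\gamma$ and its marginals are well defined, the invertibility of the $\psi_i$ playing no role in this upper bound.
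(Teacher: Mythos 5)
Your proof is correct and is precisely the canonical argument (exhibit the coupling $\gamma=(\psi_1,\psi_2)_{\#}\nu$, check its marginals, and bound the infimum by the cost of this one plan); the paper itself gives no proof, simply citing Loeper, and this is exactly the argument behind the cited result. You are also right that the $d\mu_0$ in the statement is a typo for $d\nu$, and that the homeomorphism hypothesis is only used for measurability of $(\psi_1,\psi_2)$.
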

Thanks to this lemma we obtain, for any $t\in[0,T]$,
\begin{eqnarray}
\label{inf1}
W_2 ( f_1,f_{2 })(t)^{2}  \leqslant  Q(t)  
 \text{ and }
 W_2 ( \omega_1  ,\omega_{2 } )(t)^{2} := W_2 ( \omega_1^{+} ,\omega_{2 }^{+})(t)^{2} + W_2 ( \omega_1^{-},\omega_{2 }^{-} )(t)^{2} \leqslant \widetilde{Q}(t),
\end{eqnarray}
so that it suffices to prove that $Q(t) $ and $ \widetilde{Q}(t)  $ vanish to get that $ ( \omega_1 , f_1) = ( \omega_2 , f_2) $.

Differentiating in time we get 
\begin{eqnarray*}
Q'(t)  &=& 2  \int_{\R^2   \times  \R^{2} } (\Sigma_t^1-\Sigma^2_t)\cdot (\partial_t \Sigma^1_t-\partial_t \Sigma^2_t) \,df_0 ,
\\  \widetilde{Q}'(t)    &=& 2  \int_{\R^2 }   (\phi^1_t-\phi^2_t)\cdot (\partial_t \phi^1_t-\partial_t \phi^2_t) \, d|\omega_0|.
\end{eqnarray*}
Using now \eqref{flow2} and \eqref{flow1} 
\begin{eqnarray*}
Q'(t)  &=& 2  \int_{\R^2   \times  \R^{2} }  (  X^1_t -  X^2_t   ) \cdot (  \Xi^1_t -  \Xi^2_t) \,d f_0
\\ &&- 2  \int_{\R^2   \times  \R^{2} }  (  \Xi^1_t  -  \Xi^2_t   ) \cdot \big(   u_{1}(t, X^1_t)  -  u_{2}(t,X^{2}_t)   \big)^{\perp}  df_0 ,
\\  \widetilde{Q}'(t)    &=& 2  \int_{\R^2 }   ( \phi^1_t - \phi^2_t ) \cdot  \big( u_{1} (t, \phi^{1 }_t)  - u_{2} (t, \phi^{2 }_t) \big) d|\omega_0|  .
\end{eqnarray*}
Using the Cauchy-Schwarz inequality, we have that the three terms  in the right hand sides above are respectively bounded by $Q(t)$, by 
\begin{eqnarray}
\label{preum1}
2 Q(t)^\frac{1}{2}  \Big( \int_{\R^2   \times  \R^{2} }  |     u_{1} (t,X^{1}_t  )  -  u_{2} (t,X^{2}_t )    | ^2df_0 \Big)^\frac{1}{2} ,
\end{eqnarray}
and by
\begin{eqnarray}
\label{preum2}
2 \widetilde{Q}(t)^\frac{1}{2} \Big( \int_{\R^2 }  |     u_{1} (t,\phi_{t}^1  )  -  u_{2} (t,\phi_{t}^2 )    | ^2d|\omega_0| \Big)^\frac{1}{2}.
\end{eqnarray}
Let us introduce the quantities
\begin{eqnarray*}
T_1 (t) &:=&  \int_{\R^2   \times  \R^{2} }  |  u_{2} (t,X^{1}_t )  -  u_{2} (t,X^{2}_t )    | ^2df_0 ,
\\ T_2 (t) &:=& \int_{\R^2   \times  \R^{2} } |  u_{2} (t,X^{1}_t )  -  u_{1} (t,X^{1}_t)    | ^2df_0 ,
\\  \widetilde{T}_1 (t) &:=&  \int_{\R^2  }   |    u_{2} (t, \phi^{1 }_t)  - u_{2} (t, \phi^{2 }_t)     | ^2 d|\omega_0|,
\\  \widetilde{T}_2 (t) &:=& \int_{\R^2  }    |u_{2} (t, \phi^{1 }_t)  - u_{1} (t, \phi^{1 }_t)     | ^2 d|\omega_0|.
\end{eqnarray*}
Then the terms in \eqref{preum1} and  \eqref{preum2} are respectively bounded by
\begin{eqnarray*}
2 Q(t)^\frac{1}{2} (T_1 (t)^\frac{1}{2} + T_2 (t)^\frac{1}{2} )
\text{ and }
2  \widetilde{Q}(t)^\frac{1}{2} ( \widetilde{T}_1 (t)^\frac{1}{2} +  \widetilde{T}_2 (t)^\frac{1}{2} ) .
\end{eqnarray*}
Using \eqref{pousse} and \eqref{pousseprecis} we have 
\begin{eqnarray*}
 T_2 (t) &=&  \int_{\R^2  }   |  u_{2} (t)  -  u_{1} (t)    | ^2 d\rho_1(t),
\\  \widetilde{T}_2 (t) &=& \int_{\R^2  }    |    u_{2} (t)  - u_{1} (t)     | ^2 d|\omega_1(t)| .
\end{eqnarray*}
The crucial estimate in Loeper's method is the following. 
\begin{Lemma}[\cite{Loeper}, Th. $4.5$] 
Consider two compatible signed measures on $\R^2$, $\mu_1$ and $\mu_2$, both having bounded densities with respect to he Lebesgue measure on $\R^2$.
Then
\begin{eqnarray*}
\| K \lbrack  \mu_1 \rbrack  -  K \lbrack  \mu_2 \rbrack   \|_{L^{2 }}
 \leqslant  (\max \{ \|  \mu_1^{+} \|_{L^{\infty }} , \|  \mu_2^{+} \|_{L^{\infty }} \})^{\frac{1}{2}} \ W_{2 } (\mu_1^{+} ,\mu_2^{+} )
  +   (\max \{ \|  \mu_1^{-} \|_{L^{\infty }} , \|  \mu_2^{-} \|_{L^{\infty }} \})^{\frac{1}{2}}   \  W_{2 } (\mu_1^{-} ,\mu_2^{-} )  .
\end{eqnarray*}
\end{Lemma}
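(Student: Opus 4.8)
The plan is to reduce the signed case to the case of positive measures of equal mass, and then to run Loeper's optimal-transport argument on each part. By linearity of $K$ and the triangle inequality,
\begin{align*}
\| K \lbrack  \mu_1 \rbrack  -  K \lbrack  \mu_2 \rbrack   \|_{L^{2}} = \| K \lbrack  \mu_1^+ - \mu_2^+ \rbrack  -  K \lbrack  \mu_1^- - \mu_2^- \rbrack   \|_{L^{2}} \leq \| K \lbrack  \mu_1^+ \rbrack  -  K \lbrack  \mu_2^+ \rbrack   \|_{L^{2}} + \| K \lbrack  \mu_1^- \rbrack  -  K \lbrack  \mu_2^- \rbrack   \|_{L^{2}},
\end{align*}
so it suffices to prove, for two \emph{positive} densities $\nu_1,\nu_2\in (L^\infty\cap L^1)(\R^2)$ with $\nu_1(\R^2)=\nu_2(\R^2)$ (which holds here because $\mu_1,\mu_2$ are compatible), the single-part estimate $\| K \lbrack  \nu_1 \rbrack  -  K \lbrack  \nu_2 \rbrack   \|_{L^{2}} \leq (\max\{\|\nu_1\|_{L^\infty},\|\nu_2\|_{L^\infty}\})^{1/2}\,W_2(\nu_1,\nu_2)$. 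Summing the two instances with $(\nu_1,\nu_2)=(\mu_1^+,\mu_2^+)$ and $(\mu_1^-,\mu_2^-)$ then yields the claim.

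For the single-part estimate I would first reformulate everything through the stream function. Writing $\Phi := G\star(\nu_1-\nu_2)$, recall from the introduction that $H=\nabla^\perp G$, so $K\lbrack\nu_1\rbrack - K\lbrack\nu_2\rbrack = \nabla^\perp\Phi$ and $\Delta\Phi = \nu_1-\nu_2$; since $|\nabla^\perp\Phi|=|\nabla\Phi|$ we have $\| K \lbrack  \nu_1 \rbrack  -  K \lbrack  \nu_2 \rbrack   \|_{L^{2}} = \|\nabla\Phi\|_{L^2}$. The equal-mass condition makes $\nu_1-\nu_2$ a zero-average integrable density, which provides enough decay for $\nabla\Phi$ to lie in $L^2(\R^2)$ and for the boundary terms in the integration by parts
\begin{align*}
\|\nabla\Phi\|_{L^2}^2 = -\int_{\R^2}\Phi\,\Delta\Phi = -\int_{\R^2}\Phi\,d(\nu_1-\nu_2)
\end{align*}
to vanish. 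Next I would invoke Brenier's theorem (legitimate because $\nu_1$ is absolutely continuous with bounded density) to get the optimal transport map $T$ with $T_\#\nu_1=\nu_2$ and $\int|T-\mathrm{Id}|^2\,d\nu_1=W_2(\nu_1,\nu_2)^2$, and introduce the displacement interpolation $T_t:=(1-t)\,\mathrm{Id}+tT$, $\rho_t:=(T_t)_\#\nu_1$. Using $\nu_2=T_\#\nu_1$ and the fundamental theorem of calculus along the segments $t\mapsto T_t(x)$,
\begin{align*}
\|\nabla\Phi\|_{L^2}^2 = \int_{\R^2}\bigl(\Phi(T(x))-\Phi(x)\bigr)\,d\nu_1(x) = \int_0^1\!\!\int_{\R^2}\nabla\Phi(T_t(x))\cdot(T(x)-x)\,d\nu_1(x)\,dt.
\end{align*}
A Cauchy–Schwarz inequality in $x$, together with the change of variables $\int|\nabla\Phi(T_t(x))|^2 d\nu_1(x)=\int|\nabla\Phi|^2 d\rho_t \leq \|\rho_t\|_{L^\infty}\|\nabla\Phi\|_{L^2}^2$, gives $\|\nabla\Phi\|_{L^2}^2 \leq W_2(\nu_1,\nu_2)\int_0^1 \|\rho_t\|_{L^\infty}^{1/2}\,\|\nabla\Phi\|_{L^2}\,dt$.

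The crux is then the displacement-interpolation bound $\|\rho_t\|_{L^\infty}\leq \max\{\|\nu_1\|_{L^\infty},\|\nu_2\|_{L^\infty}\}$ for every $t\in[0,1]$; with it, the last inequality reduces to $\|\nabla\Phi\|_{L^2}^2 \leq (\max\{\|\nu_1\|_{L^\infty},\|\nu_2\|_{L^\infty}\})^{1/2}\,\|\nabla\Phi\|_{L^2}\,W_2(\nu_1,\nu_2)$, and dividing by $\|\nabla\Phi\|_{L^2}$ (the case $\nabla\Phi=0$ being trivial) finishes the single-part estimate. This $L^\infty$ control of the McCann interpolant is exactly the step I expect to be the main obstacle: it rests on the regularity theory of optimal transport, namely that the interpolant density is governed by the Jacobian of $T_t$ via the Monge–Ampère equation and that concavity of $\det^{1/2}$ forces the density to stay below the larger of the two endpoint bounds. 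I would either quote this directly from Loeper or from McCann's work on displacement convexity; everything else (the reduction to positive parts, the stream-function reformulation, the interpolation identity and Cauchy–Schwarz) is routine once this $L^\infty$ bound is in hand.
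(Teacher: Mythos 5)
The paper does not prove this lemma: it is quoted verbatim from Loeper's article (Theorem $4.5$ there, stated for probability densities) and the signed-measure version is exactly the one obtained by the Jordan--Hahn decomposition, which is why the right-hand side splits into a positive-part term and a negative-part term. Your proposal is a correct reconstruction of that argument. The reduction step is the right one: compatibility gives $\mu_1^{\pm}(\R^2)=\mu_2^{\pm}(\R^2)$, so each pair of parts has equal mass and the single-part estimate applies; linearity of $K$ and the triangle inequality then produce precisely the two-term bound in the statement. The single-part estimate itself follows Loeper's scheme faithfully: stream function $\Phi=G\star(\nu_1-\nu_2)$ with $|K[\nu_1]-K[\nu_2]|=|\nabla\Phi|$, the identity $\|\nabla\Phi\|_{L^2}^2=-\int\Phi\,d(\nu_1-\nu_2)$, Brenier's map, the interpolation $T_t=(1-t)\,\mathrm{Id}+tT$, Cauchy--Schwarz, and the McCann bound $\|\rho_t\|_{L^\infty}\leq\max\{\|\nu_1\|_{L^\infty},\|\nu_2\|_{L^\infty}\}$ coming from the concavity of $t\mapsto\det\big((1-t)\mathrm{Id}+tD^2\varphi\big)^{1/2}$. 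You correctly identify this last bound as the crux; it is indeed the ingredient one quotes from McCann or from Loeper. The only places requiring care, which you flag but do not fully carry out, are (i) the justification that $\nabla\Phi\in L^2$ and that the boundary terms in the integration by parts vanish --- this uses the zero total mass of $\nu_1-\nu_2$ together with the finite first moments implied by finiteness of $W_2$, in the spirit of \eqref{crazy} in the paper --- and (ii) the fundamental theorem of calculus along the segments $t\mapsto T_t(x)$, legitimate since $\nabla\Phi$ is bounded and continuous for $\nu_1-\nu_2\in L^1\cap L^\infty$. Neither point is a gap in substance.
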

In particular we get here, for every $t\in[0,T]$
\begin{eqnarray*}
\| u_1(t) - u_{2 }(t) \|_{L^{2 }} \leqslant  C \Big[W_2 ( \omega_1^{+} ,\omega_{2 }^{+}) +  W_2 ( \omega_1^{-} ,\omega_{2 }^{-})   +  W_{2 } (  \rho_{1},\rho_{2} )  \Big](t),
\end{eqnarray*}
where the constant $C$ depends on $\max (\|  \omega_1  \|_{L^{\infty }} ,\|    \rho_{1}   \|_{L^{\infty }} ,\|  \omega_2   \|_{L^{\infty }}, \| \rho_{2}   \|_{L^{\infty }} )$.
Modifying $C$ into another constant depending only on the same quantity, we get:
\begin{eqnarray*}
\max (T_2 (t) ,  \widetilde{T}_2 (t) )\leq C  \Big[W_2 ( \omega_1^{+} ,\omega_{2 }^{+} ) +  W_2 ( \omega_1^{-} ,\omega_{2 }^{-} )  +  W_{2 } (  \rho_{1} ,\rho_{2} )   \Big]^2(t) .
\end{eqnarray*}
Using now \eqref{inf1}  we obtain that for an appropriated constant $C$, 
\begin{eqnarray*}
\max (T_2 (t) ,  \widetilde{T}_2 (t) )\leq C ( Q(t) +   \widetilde{Q} (t)) .
\end{eqnarray*}
Let us now tackle the terms $T_1 $ and $ \widetilde{T}_1 $. 
From the $L^\infty$ bounds on $u_1,u_2$, and the equations \eqref{flow1} and \eqref{flow2} we infer that there exists $t_\star>0$ (depending only on the initial data through their norms in $L^1\cap L^\infty$) such that 
\begin{align*}
\|  \phi^{1} -  \phi^{2}  \|_{L^\infty ( [0,t_\star] \times \R^2)} + \|  X^{1} - X^{2}  \|_{L^\infty ( [0,t_\star] \times \R^2 \times \R^2)}   \leq  \frac{1}{e}.
\end{align*}
We  may even assume that the same estimate holds for $Q+\widetilde{Q}$ on the same interval.
Then, since the vector fields $u_{1}$ and   $u_{2}$ are   Log-Lipschitz  on $\R^2$, uniformly in time, we have, for all $t\in[0,t_\star]$, noting $g:x\mapsto x(\ln x)^2$
\begin{eqnarray*}
T_1 (t)  &\leqslant&   C \int_{\R^2   \times  \R^{2} } \left| (X^{1}_t -   X^2_t )  \ln ( X_{t}^1   -   X_{t}^2 )   \right|^2 df_0 = \frac{C}{4} \int_{\R^2   \times  \R^{2} } g\left(|X^{1}_t -   X^2_t|^2\right) df_0
\\  \widetilde{T}_1 (t)  &\leqslant&   C  \int_{\R^2  } | (\phi_{t}^1   - \phi_{t }^2  ) \ln  (\phi_{t }^1  - \phi_{t }^2  )  | ^2d|\omega_0| = \frac{C}{4} \int_{\R^2 } g\left(|\phi^{1}_t -   \phi^2_t|^2\right) d|\omega_0|
\end{eqnarray*}
Since $g$ is concave for $0 \leqslant x  \leqslant \frac{1}{e}$ it follows from  Jensen's inequality that (after renormalization) 
\begin{eqnarray*}
T_1 (t)  &\leqslant&  \frac{C}{4} \|f_0\|_{L^1_{x,\xi}} g\left\{ \int_{\R^2\times\R^2}|X^{1}_t -   X^2_t|^2 \frac{df_0}{\|f_0\|_{L^1_{x,\xi}}} \right\}.
\end{eqnarray*}
Since $g$ is increasing for $0 \leqslant x\leqslant \frac 1 e$, we get on $[0,t_\star]$
\begin{align*}
T_1 \leqslant \frac{C}{4} \|f_0\|_{L^1_{x,\xi}} g\left\{ \frac{Q}{\|f_0\|_{L^1_{x,\xi}}} \right\}=\frac{C}{4}Q(\ln Q-\ln\|f_0\|_{L^1_{x,\xi}})^2.
\end{align*}
Thus, doing the same for $\widetilde{Q}$, we get eventually
\begin{align*}
T_1 &\leqslant C_{f_0} Q(1-\ln Q)^2\\
\widetilde{T}_1 &\leqslant C_{\omega_0} \widetilde{Q}(1-\ln \widetilde{Q})^2\\
\end{align*}
Collecting all the previous bounds we obtain an estimate of the type, on $[0,t_\star]$
\begin{align*}
Q'+\widetilde{Q}'\leqslant C_{f_0,\omega_0}\big[Q+\widetilde{Q}-Q\ln Q-\widetilde{Q}\ln\widetilde{Q}\big],
\end{align*}
and by convexity we get finally
\begin{align*}
Q'+\widetilde{Q}'\leqslant C_{f_0,\omega_0}\big[Q+\widetilde{Q}-(Q+\widetilde{Q})\ln (Q+\widetilde{Q})\big],
\end{align*}
Since  $ Q(0) =  \widetilde{Q}(0) =0$, this allows to conclude that $Q$ and $\widetilde{Q}$ vanish on $[0,t_\star]$ and thus on $[0,T]$ by iterating the above argument.
\end{proof}
\section{Poisson structure }
\label{PoissonStructure}
The goal of this section is to exhibit the Hamiltonian structure of the equations \eqref{VE1}-\eqref{VE2},  following \cite{ArnoldKhesin}. 
We will be quite formal here, leaving aside the regularity issues. One therefore will think at some functions  $(\omega,f)$ which are smooth in $t,x,\xi$ with a nice decreasing at infinity.
First we endow the manifold $ \mathcal{P}$ of the pairs $(\omega,f)$ with a Poisson structure, that is to say, we endow ${\mathcal P}$ with a bracket $\{ \cdot  , \cdot \}$ acting on $\mathscr{C}^{\infty}$ functionals $F:{\mathcal P} \rightarrow \R$, bilinear and skew-symmetric,  satisfying the Jacobi and the Leibniz identities. Here this is obtained by setting, for any smooth functionals $F , G$ on $ \mathcal{P}$, 
\begin{equation*}
\{F , G \} := 
\int_{\R^2  \times  \R^2}  f \ \nabla_{\xi} F_f \cdot \nabla_{\xi}^\perp G_f 
+ \int_{\R^2  \times  \R^2} f \  \{F_f , G_f \}_{x,\xi}
   + \int_{\R^2   }  w \ \nabla_{x} F_w \cdot  \nabla_{x}^\perp G_w ,
\end{equation*}
where  $F_\omega$ and $ F_f$ denote the gradients  with respect to $ \omega $ and $f$ of a functional $F$ in $L^{2}$, given by the formula 
\begin{eqnarray*}
\int_{\R^2   } F_\omega (\omega,f) h = \lim_{\eps \rightarrow 0} \frac{F (\omega +  \eps h,f) - F (\omega ,f) }{\eps} \text{ and } \int_{\R^2  \times  \R^2  } F_f (\omega,f) \widetilde{h} = \lim_{\eps \rightarrow 0} \frac{F (\omega ,f+ \eps \widetilde{h}) - F (\omega ,f) }{\eps}
\end{eqnarray*}
and the bracket $\{ \cdot  , \cdot \}_{x,\xi}$ stands for
\begin{equation*}
\{ f  , g \}_{x,\xi} := \nabla_{x} \, f \cdot  \nabla_{\xi} \, g - \nabla_{\xi}\,  f \cdot  \nabla_{x}\,  g .
\end{equation*}
The properties cited above are clear from this definition. \par
Let us now define  $\mathcal{H}$ by 
\begin{eqnarray*}
2 \mathcal{H} =  \int_{\R^2 \times  \R^2 } \xi^{2} f(t,x,\xi) dx d\xi  
 -  \int_{\R^2 \times  \R^2}  G (x-y) (\omega +  \rho) (t,x) (\omega +  \rho) (t,y) dx dy  , \quad \text{ with } \rho :=  \int_{\R^2  } f d\xi ,
\end{eqnarray*}
 which can also be seen as a functional on the manifold $\mathcal{P}$, and this endows the system with a Hamiltonian structure in the following sense.
\begin{Proposition}\label{ProPoisson}
When $(  \omega,f )$ solves  the equations \eqref{VE1}-\eqref{VE2} we have for any smooth functional $F$, the ordinary differential equation
\begin{equation}
\label{haha}
\frac{d}{dt} F  = \{F , \mathcal{H} \},
\end{equation}
where $F$ and $ \mathcal{H}$ in \eqref{haha} stand respectively for $ F ( \omega,f ) $ and $   \mathcal{H}( \omega,f ) $.
\end{Proposition}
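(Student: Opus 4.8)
The plan is to verify \eqref{haha} by computing its two sides separately and checking that they agree identically, the only ingredients being the variational derivatives of $\mathcal{H}$ and the evolution equations \eqref{VE1}-\eqref{VE2}. The very first step is therefore to compute $\mathcal{H}_\omega$ and $\mathcal{H}_f$. The kinetic term $\tfrac12\int\xi^2 f$ contributes $\tfrac12|\xi|^2$ to $\mathcal{H}_f$ and nothing to $\mathcal{H}_\omega$. The interaction term $-\tfrac12\int\!\!\int G(x-y)(\omega+\rho)(x)(\omega+\rho)(y)$ depends on $f$ only through $\rho=\int f\,d\xi$; differentiating the quadratic form and using the symmetry $G(x-y)=G(y-x)$ (which absorbs the factor two), together with the chain rule $\delta\rho(x)/\delta f(x,\xi)=1$, one finds that both $\mathcal{H}_\omega$ and the $f$-part of this term equal $-G\star(\omega+\rho)$. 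Setting $\psi:=G\star(\omega+\rho)$, so that $u=\nabla^\perp\psi=K[\omega+\rho]$, this yields $\mathcal{H}_\omega=-\psi$ and $\mathcal{H}_f=\tfrac12|\xi|^2-\psi(x)$.

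The decisive structural observation is that the $\psi$-part of $\mathcal{H}_f$ is independent of $\xi$, so that $\nabla_\xi\mathcal{H}_f=\xi$ and $\nabla_\xi^\perp\mathcal{H}_f=\xi^\perp$, while $\nabla_x\mathcal{H}_f=-\nabla_x\psi=u^\perp$ (using $\nabla_x\psi=-u^\perp$) and $\nabla_x^\perp\mathcal{H}_\omega=-\nabla_x^\perp\psi=-u$. I would then insert these into the definition of $\{F,\mathcal{H}\}$ and simplify term by term. The canonical bracket $\{F_f,\mathcal{H}_f\}_{x,\xi}$ produces the transport part $\nabla_x F_f\cdot\xi$ together with $-\nabla_\xi F_f\cdot u^\perp$; adding the velocity-shear integral $\int f\,\nabla_\xi F_f\cdot\xi^\perp$ and combining $\xi^\perp-u^\perp=(\xi-u)^\perp$ gives precisely $\int f\big[\nabla_x F_f\cdot\xi+\nabla_\xi F_f\cdot(\xi-u)^\perp\big]$, while the vorticity integral contributes a term proportional to $\int\omega\,u\cdot\nabla_x F_\omega$.

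Next I would compute $\tfrac{d}{dt}F=\int\partial_t\omega\,F_\omega+\int\partial_t f\,F_f$, substitute the transport forms of \eqref{VE1}-\eqref{VE2}, and integrate by parts. Since $u$ is divergence-free in $x$ and the lift field $(\xi-u)^\perp$ is divergence-free in $\xi$, no derivative is transferred onto the arguments $\omega$ or $f$, and one obtains $\int\omega\,u\cdot\nabla_x F_\omega+\int f\big[\xi\cdot\nabla_x F_f+(\xi-u)^\perp\cdot\nabla_\xi F_f\big]$. Comparing with the previous paragraph, the two $f$-integrals match the bracket exactly.

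The main obstacle is the sign bookkeeping in the vorticity (Arnold--Khesin) term: one must check that the orientation convention adopted for $\nabla^\perp$ in the last integral of $\{\cdot,\cdot\}$ reproduces the transport sign of \eqref{VE1}, that is $+\int\omega\,u\cdot\nabla_x F_\omega$ after integration by parts, which is the standard delicate point of the Lie--Poisson bracket for two-dimensional Euler and may require writing that term in the equivalent antisymmetric form $\int\omega\,\nabla_x^\perp F_\omega\cdot\nabla_x\mathcal{H}_\omega$. Once this orientation is fixed consistently with the kinetic (coupled) terms, all contributions coincide. The remaining points are routine but must be stated: the factor two from differentiating the quadratic interaction energy is cancelled by the symmetry of $G$, the $\xi$-independence of the interaction part of $\mathcal{H}_f$ is what makes $\nabla_\xi\mathcal{H}_f=\xi$, and sufficient decay/smoothness of $(\omega,f)$ is assumed throughout to discard all boundary terms in the integrations by parts.
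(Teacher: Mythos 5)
Your proof is correct and follows essentially the same route as the paper: compute $\mathcal{H}_\omega=-G\star(\omega+\rho)$ and $\mathcal{H}_f=\tfrac12|\xi|^2-G\star(\omega+\rho)$, substitute the transport equations into the chain rule for $\tfrac{d}{dt}F$, integrate by parts using that $u$ and $(\xi-u)^\perp$ are divergence-free, and match the resulting terms with the bracket. Your treatment of the vorticity term is in fact more careful than the paper's, which asserts $u=\nabla_x^\perp\mathcal{H}_\omega$ even though the variational derivative gives $\nabla_x^\perp\mathcal{H}_\omega=-u$; as you note, the last term of the bracket must be taken with the orientation producing $+\int\omega\,u\cdot\nabla_x F_\omega$ (be aware, though, that $\int\omega\,\nabla_x^\perp F_\omega\cdot\nabla_x\mathcal{H}_\omega$ is the \emph{negative} of $\int\omega\,\nabla_x F_\omega\cdot\nabla_x^\perp\mathcal{H}_\omega$, not an equivalent rewriting).
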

\begin{proof}
According to the chain rule, we have 
\begin{equation*}
\frac{d  }{dt} F (\omega,f )  =  \int_{\R^2  } \frac{\partial  \omega}{\partial t} F_\omega +  \int_{\R^2 \times  \R^2 } \frac{\partial  f}{\partial t} F_f  .
\end{equation*}
Using the equations \eqref{VE1}--\eqref{VE2} we  arrive to 
\begin{equation*}
\frac{d  }{dt}  F ( \omega , f) =  -   \int_{\R^2  } u  \cdot \nabla_{x} \omega \, F_\omega 
 -   \int_{\R^2 \times  \R^2 } \xi   \cdot  \nabla_{x}  f  \, F_f 
  -   \int_{\R^2 \times  \R^2 }  (\xi - u  )^{\perp} \cdot  \nabla_{\xi } f  \, F_f 
  =: I_1 + I_2 +I_3 .
\end{equation*}
On the other side the derivatives of $ \mathcal{H}$ are 
\begin{equation*}
\mathcal{H}_w = -  \int_{\R^2  }  G( \cdot - y  ) \, (\omega + \rho  ) (y) \, dy  , \quad \mathcal{H}_f =  \frac{\xi^{2}}{2} -  \int_{\R^2  }   G( \cdot - y  ) \, (\omega + \rho  ) (y) \, dy ,
\end{equation*}
Then, integrating by parts, we find 
\begin{equation*}
I_1 =   \int_{\R^2  } \omega  \nabla_{x}  \, F_\omega \cdot u   =  =   \int_{\R^2  } \omega  \nabla_x F_\omega  \cdot \nabla_x^\perp  \mathcal{H}_\omega ,
\end{equation*}
since 
$u = \nabla_x^\perp  \mathcal{H}_\omega $.

On the other side, since
$\xi = \nabla_\xi  \mathcal{H}_f $
we have
\begin{eqnarray*}
I_2 =   \int_{\R^2 \times  \R^2 } f   \nabla_{x} F_f   \cdot \nabla_{\xi }  \mathcal{H}_{f }  \text{ and }
 I_3 =   \int_{\R^2 \times  \R^2 } f \nabla_{ \xi } F_f  \cdot   \nabla_{ \xi }^\perp  \mathcal{H}_{f }  -  \int_{\R^2 \times  \R^2 } f   \nabla_{ x}  \mathcal{H}_{f }  \cdot   \nabla_{ \xi }  F_f  ,
\end{eqnarray*}
what yields  \eqref{haha}.
\end{proof}
As a simple corollary of \eqref{haha} and of the skew-symmetry of the bracket we get that $\mathcal{H}$ is conserved by the solutions of  \eqref{VE1}-\eqref{VE3}. \par
\section{Massless limit}
\label{MassLess}
In this section we investigate the behavior, when $\eps \rightarrow 0^+$ of the system 
\begin{eqnarray}
\label{VE1eps}
 \partial_t \omega^\eps + \div_x (\omega^\eps  u^\eps)  &=& 0 ,
\\   \label{VE2eps} 
 \partial_t  f^\eps + \div_x (f^\eps  \xi ) + \frac{1}{\eps} \div_\xi (  f^\eps (\xi - u^\eps  )^{\perp}) &=& 0 ,
\end{eqnarray}
where 
\begin{eqnarray}
\label{VE3eps} u^\eps :=K  \lbrack \omega^\eps + \rho^\eps  \rbrack  \text{ and } \rho^\eps :=  \int_{\R^2  } f^\eps d\xi .
\end{eqnarray}
This corresponds to the equations \eqref{VE1}-\eqref{VE2}-\eqref{VE3} with an extra factor $ {\eps}^{-1}$ which encodes a regime where the particles immersed into the fluid are lighter and lighter as  $\eps \rightarrow 0^+$.

We are going to prove that, in this limit, the equations  \eqref{VE1eps}-\eqref{VE2eps}-\eqref{VE3eps} degenerates into the incompressible Euler equation:
\begin{eqnarray}
\label{Euler}
 \partial_t u  + \div_x (u \otimes  u)  + \nabla p &=& 0  ,
\\ \label{EulerDIV} \div_x u  &=& 0. 
\end{eqnarray}
An heuristic way to guess the previous limit is the following: we infer from the equation \eqref{VE2eps} that in the limit $\eps \rightarrow 0^+$ the density of particles becomes monokinetic with a velocity $\xi =  u$ so that 
\begin{eqnarray}
 j^\eps := \int_{ \R^{2}}  f^\eps \xi d\xi 
\end{eqnarray}
converges to $ \rho u$,  where $ \rho$ and $ u$ stand for the respective limits of   $ \rho^\eps$ and $ u^\eps$.
Therefore the right hand side of the equation \eqref{VE1v} vanishes, again formally, what yields \eqref{Euler}-\eqref{EulerDIV}.

This problem is very close to the so-called gyrokinetic limit considered in \cite{brenier} with here an additional coupling to an Euler-type equation. 
  We will actually follow the strategy of \cite{brenier} based on the use of a modulated energy, with a few modifications. In particular we will make use of the notion of dissipative solutions of the Euler equations. This notion was  introduced by P.-L. Lions in  \cite{lions} and used in \cite{brenier}  with a slight modification in the definition.  Here we will perform another slight modification by extending Brenier's definition to the case of infinite energy, since arguably in $2$d the case of a incompressible perfect fluid of finite energy is too restrictive.
  For instance, it is easy to see from the definitions  \eqref{BS} and  \eqref{NBS} that, given  a smooth compactly supported function $g$ from $\R^{2}$ to $\R$, we have that  
  \begin{eqnarray}
\label{crazy}
 K [ g] \text{ is in  }   L^{2} \text{ if and only if  } \int_{\R^{2}} g(x) dx  = 0.
 \end{eqnarray}
  To deal with vorticities which do not satisfy this last condition, we follow  \cite{CheminSMF}  (see also \cite{Majda}) by defining the following.
  Let $\alpha \in \R$. There exists $g_{\alpha} \in \mathscr{D}(]0,\infty[; \R )$  such that $2 \pi  \displaystyle\int_{0}^{+\infty}  g_{\alpha} (r) r dr = \alpha $.
  We then define $H_{\alpha} := K [ g_{\alpha} (\|  \cdot \|) ]$ and  the space 
  $$E_{\alpha} :=  H_{\alpha} + L^{2}_{\sigma}   (\R^{2}), $$
  where  $ L^{2}_{\sigma}   (\R^{2})$ denotes the  divergence free vector fields in  $ L^{2}  (\R^{2})$.
  Observe that $H_{\alpha}$ is a smooth stationary solution of the the $2$d incompressible Euler equation \eqref{Euler}-\eqref{EulerDIV}.
  Finally the finite energy case corresponds to $\alpha = 0$ : thanks to \eqref{crazy}, there holds $E_{0} = L^{2}$.
\begin{Definition}[] 
\label{dissip}
 Let $\alpha \in \R$ and  $u_0 \in E_{\alpha} $. 
We say that a divergence free vector field $u \in L^{\infty} ((0,T) ; E_{\alpha} ) $ is a dissipative solution of the incompressible Euler equation \eqref{Euler}-\eqref{EulerDIV} with  $u_0$ as initial data if for any smooth vector field $v \in \mathscr{C}^0 ( [ 0,T] ; E_{\alpha} ) $, such as $A(v):=\partial_t v+ v\cdot \nabla v\in L^1((0,T);L^2(\R^2))$, and $\dd(v) \in L^1((0,T);L^2(\R^2))$ (where $\dd(v)$ is the symmetric part\footnote{This means $\dd(v)_{ij} := \frac{1}{2} (\partial_{j} v_{i} + \partial_{i} v_{j} ).$} of $D(v)$), for almost every $t \in [0,T ]$, 
\begin{eqnarray}
\label{WS}
\int_{\R^2} | u(t,x) - v(t,x)  |^{2} dx \leqslant \int_{\R^2} | u_{0} (x) - v(0,x)  |^{2} dx   \exp \int_0^{t} 2 \|  \dd(v(\theta))  \| d\theta 
\\ \nonumber + 2  \int_0^{t} \int_{\R^2} A(v)(s,x) (v-u)(s,x) \exp \left\{  \int_s^{t}2 \|  \dd(v(\theta))  \| d\theta\right\}  dx ds ,
\end{eqnarray}
where $ \|  \dd(v(\theta))  \|$ is the supremum in $x$ of the spectral radius of $\dd(v) (\theta,x)$.
\end{Definition}
\begin{Remark}
\label{rem:lions} Just as in \cite{lions}, let us note that by a standard regularization procedure it is sufficient to verify estimate \eqref{WS} for (only) all smooth (in time/space) vector fields $v$ satisfying $v(t)\in E_\alpha$ and $\curl v(t)$ is compactly supported, for all $t$.
\end{Remark}
Let us emphasize the two  modifications with respect to the Definition in  \cite{lions}: the first one, already done in  \cite{brenier}, is that we use the spectral radius of the whole matrix 
   $\dd(v) $, not only its negative part. The second one is that we deal with any $\alpha \in \R$, not only the case $\alpha=0$.
  
  The key property of this kind of solutions is that it allows the following weak-strong uniqueness result:
  if $\widetilde{u}$ is a smooth solution of the incompressible Euler equation \eqref{Euler}-\eqref{EulerDIV} with  $u_0$ as initial data then $u = \widetilde{u}$. 
  This can be easily obtained by observing that $A( \widetilde{u})$ (respectively $\widetilde{u} -u$) being a gradient (resp. divergence free) then  the integral $\displaystyle \int_{\R^2} A( \widetilde{u}) (\widetilde{u} -u) dx$ vanishes.
\begin{Theorem}[] 
\label{modulated}
Let be given  $\alpha \in \R$ and  $u_0 \in E_{\alpha} $. 
Let be given some smooth   solutions  $(\omega^\eps , f^\eps )_\eps$, with a nice decay of $f^\eps$ as $\xi$ goes to $\infty$, of the equations \eqref{VE1eps}-\eqref{VE2eps}-\eqref{VE3eps} corresponding to some smooth compactly supported initial data $(\omega^\eps_0 , f^\eps_0 )_\eps$ such that 
\begin{eqnarray}
\label{modulated00}
(\omega^\eps_0 )_\eps \text{ is bounded in } L^{2}  (\R^2 ) \text{ and } \left(\rho^\eps_{0} :=  \int_{\R^2  } f^\eps_{0} d\xi  \right)_\eps \text{ is bounded in } L^{1}  (\R^2 )
\end{eqnarray}
and such that, when $\eps \rightarrow 0^+$,
\begin{eqnarray}
\label{modulated01}
\eps \int_{\R^2 \times \R^{2}}  | \xi |^{2} f^{\eps}_{0} (x,\xi) dx d\xi  \rightarrow 0 , \quad  \int_{\R^2}  | u^\eps_0 - u_{0 }|^{2} dx  \rightarrow 0 ,
\end{eqnarray}
where $u^\eps_0 :=  K [\omega^\eps_0 + \rho^\eps_{0}]$.
Then, up to an extraction, the sequence $(u^\eps )_\eps$ converges in $L^{\infty} ((0,T) ; E_{\alpha} -w) $ to a dissipative solution  of the incompressible Euler equation with initial condition $u_{0}$.
\end{Theorem}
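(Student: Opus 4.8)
The plan is to run a \emph{modulated energy} argument in the spirit of \cite{brenier}, the point being that one never needs to identify the limit of the current $j^\eps$ or to prove a strong (monokinetic) convergence of $f^\eps$: the dissipative formulation of Definition \ref{dissip} is precisely a weak-strong stability inequality, and the modulated energy produces it directly in the limit. The first thing I would pin down is the \emph{correct} conserved energy of \eqref{VE1eps}-\eqref{VE3eps}. Writing the system in the velocity formulation (exactly as \eqref{VE1v} is derived from \eqref{VE1}-\eqref{VE2}, and noting that the factor $\eps^{-1}$ integrates away in $\xi$, so the lift force stays $(\rho^\eps u^\eps-j^\eps)^\perp$ \emph{without} any $\eps^{-1}$), a short computation using that the gyroscopic force is orthogonal to $\xi-u^\eps$ shows that
\[
\frac{d}{dt}\Big[\tfrac12\int_{\R^2}|u^\eps|^2\,dx+\tfrac{\eps}{2}\int_{\R^2\times\R^2}|\xi|^2 f^\eps\,dx\,d\xi\Big]=0 .
\]
This is what dictates the $\eps$ weight in assumption \eqref{modulated01}: the kinetic energy enters the conserved quantity only through $\eps\int|\xi|^2 f^\eps$, so that the conserved energy is asymptotically the pure fluid energy.

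With this in hand I would fix an admissible test field $v$ (smooth, $v(t)\in E_\alpha$, $\curl v(t)$ compactly supported, as permitted by Remark \ref{rem:lions}) and introduce the modulated energy
\[
\mathcal{E}^\eps(t):=\tfrac12\int_{\R^2}|u^\eps-v|^2\,dx+\tfrac{\eps}{2}\int_{\R^2\times\R^2} f^\eps\,|\xi-v|^2\,dx\,d\xi .
\]
Taking $v=H_\alpha$ (a stationary Euler solution, so $A(H_\alpha)$ is a gradient and pairs to zero against the divergence-free $u^\eps-H_\alpha$) the Gr\"onwall step below already yields uniform bounds: $\mathcal{E}^\eps$ stays bounded, hence $(u^\eps)_\eps$ is bounded in $L^\infty((0,T);E_\alpha)$ (i.e. $u^\eps-H_\alpha$ bounded in $L^\infty_t L^2_x$) and $\eps\int|\xi|^2 f^\eps$ is bounded in $L^\infty_t$, using \eqref{modulated00}. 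I then extract a subsequence with $u^\eps\rightharpoonup u$ in $L^\infty((0,T);E_\alpha-w)$.

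The heart of the proof is the computation of $\tfrac{d}{dt}\mathcal{E}^\eps$, differentiating the two pieces along \eqref{VE1eps}-\eqref{VE2eps} in velocity form. The \emph{crucial cancellation} is that the two $O(\eps^{-1})$ contributions — the one coming from the lift force in the fluid part and the one coming from the gyroscopic term in the ($\eps$-weighted) kinetic part — are exactly opposite, both equal to $\mp\int f^\eps(u^\eps-v)\cdot(\xi-u^\eps)^\perp$, and therefore drop out. After reorganizing the remaining terms through $A(v)=\partial_t v+v\cdot\nabla v$ and $\dd(v)$ (integrating by parts, using $\div v=0$ and the explicit decay of $H_\alpha$ to discard boundary terms at infinity), and bounding the quadratic terms $-(w\otimes w):\dd(v)\le\|\dd(v)\|\,|w|^2$ by the \emph{spectral radius} of the full matrix $\dd(v)$ — which is exactly why Definition \ref{dissip} uses $\|\dd(v)\|$ and not just its negative part — I expect to reach
\[
\frac{d}{dt}\mathcal{E}^\eps\le 2\|\dd(v)\|\,\mathcal{E}^\eps+\int_{\R^2}(v-u^\eps)\cdot A(v)\,dx-\eps\int_{\R^2}(j^\eps-\rho^\eps v)\cdot A(v)\,dx .
\]

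Finally I would integrate this differential inequality by the Gr\"onwall lemma and pass to the limit. The initial modulated energy converges: $\tfrac12\int|u^\eps_0-v(0)|^2\to\tfrac12\int|u_0-v(0)|^2$ by the second part of \eqref{modulated01} (strong $L^2$ convergence $u^\eps_0\to u_0$, with $u_0-v(0)\in L^2_\sigma$), while the $\eps$-weighted kinetic piece vanishes by the first part of \eqref{modulated01} together with \eqref{modulated00}. The spurious remainder $\eps\int(j^\eps-\rho^\eps v)\cdot A(v)$ tends to $0$ because Cauchy--Schwarz gives $\eps\int|j^\eps|\le \eps^{1/2}(\eps\int|\xi|^2f^\eps)^{1/2}\|\rho^\eps\|_{L^1}^{1/2}=O(\eps^{1/2})$. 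The linear term passes to the limit by weak convergence $u^\eps\rightharpoonup u$ (pairing the $L^2_\sigma$ difference against $A(v)\in L^1_tL^2_x$), and weak lower semicontinuity controls the left-hand side, $\tfrac12\int|u(t)-v(t)|^2\le\liminf_\eps \mathcal{E}^\eps(t)$. Collecting these, the limit inequality is exactly \eqref{WS}, so $u$ is a dissipative solution with datum $u_0$. I expect the main technical obstacles to be twofold: making the singular-term cancellation rigorous with the right weighting (this is the conceptual core, and the reason the $\eps$ must sit on the kinetic modulated energy), and controlling all the integrations by parts in the infinite-energy regime $\alpha\neq0$ — this is the genuine novelty over \cite{brenier}, handled by exploiting the explicit $|x|^{-1}$ decay of $H_\alpha$ and the compact support of $\curl v$ to guarantee convergence of the quadratic-in-$v$ fluxes and vanishing of the boundary terms.
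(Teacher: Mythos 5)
Your proposal is correct and follows essentially the same route as the paper: the same modulated energy $\mathcal{H}^{\eps}_{v}$, the same exact cancellation between the ($\eps$-weighted) gyroscopic term and the lift force $(\rho^\eps u^\eps-j^\eps)^\perp$, the same differential inequality (your displayed estimate is precisely Lemma \ref{lengthy} after bounding the two quadratic terms by $2\|\dd(v)\|\,\mathcal{H}^{\eps}_{v}$), the same $O(\sqrt{\eps})$ control of $\eps\|j^\eps\|_{L^1}$, and the same limit passage. The only point where you are lighter than the paper is at the very end: the paper proves a compactness statement (a $W^{-1,1}$ bound on $\rho^\eps u^\eps$ via the lemma $\|gK[g]\|_{W^{-1,1}}\leqslant\tfrac32\|K[g]\|_{L^2}$, then Ascoli) to upgrade to convergence in $\mathscr{C}^0_t(L^2_x-w)$ and the pointwise-in-time bound \eqref{ineq:enermod}, whereas you invoke weak lower semicontinuity only after testing in time --- which does suffice for the almost-everywhere-in-$t$ inequality required by Definition \ref{dissip}.
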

Note that as a simple corollary of Theorem \ref{modulated} and of the  weak-strong uniqueness result mentioned above we have in the particular case where   $u_0$ is smooth with bounded vorticity that the whole sequence $(u^\eps )_\eps$ converges  to the unique smooth solution  of the incompressible Euler equation with initial condition $u_{0}$.
Let us refer here again to  \cite{CheminSMF}  and  \cite{Majda} for the existence and uniqueness of smooth solutions of the incompressible  Euler equations in the plane with infinite energy.
\begin{proof}
Notice first that the sequence $(u^\eps)_\eps(t)$ takes values in $E_\alpha$, since 
\begin{align*}
\int_{\R^2} \omega^\eps(t,x) + \rho^\eps(t,x) dx = \int_{\R^2} \omega^\eps(0,x) + \rho^\eps(0,x) dx = \alpha.
\end{align*}
Consider a smooth (in time/space) vector field $v$ such as $v(t)\in E_\alpha$ and $\curl v(t)$ is compactly supported, for all $t$. Let us denote
\begin{eqnarray}
\label{defhv}
2 \mathcal{H}^{\eps}_{v} (t)  := \eps  \int_{\R^2 \times \R^2  } | \xi - v(t,x)  |^{2} f^{\eps}(t,x,\xi) dx d\xi + \int_{\R^2} | u^{\eps}(t,x) - v(t,x)  |^{2} dx .
\end{eqnarray}
The first thing to say about the modulated energy  $\mathcal{H}^{\eps}_{v} (t) $  is that it is the sum of two nonnegative finite terms.
In order  to explain in what sense it can be understood as a modulation of the energy 
 \begin{align}
 \label{lavraienergy}
  2 \mathcal{H}^\eps(t) :=  \eps M_2 (f^\eps)(t) - \int_{\R^2\times\R^2} (\rho^\eps+\omega^\eps)(t,x)(\rho^\eps+\omega^\eps)(t,y)G(x-y) dx dy,
 \end{align}
 one can reformulate it in the following way. Let us denote $h := \curl v$ so that  
\begin{eqnarray}
\label{bill}
v =K[h] =  \nabla^\perp (G\star h).
\end{eqnarray}
Then, using \eqref{bill} and \eqref{VE3eps},  we get 
\begin{eqnarray}
\label{square}
2 \mathcal{H}^{\eps}_{v}(t) = \eps  \int_{\R^2  \times \R^2} | \xi - v(t,x)  |^{2} f^{\eps}(t,x,\xi) dx d\xi   -\int_{\R^2\times\R^2} (\rho^\eps+\omega^\eps-h)(t,x)(\rho^\eps+\omega^\eps-h)(t,y)G(x-y) dx dy,
\end{eqnarray}
so that one recovers the energy  $\mathcal{H}^{\eps} (t) $ by setting $v=0$ in \eqref{square}.

It is also not difficult to deduce from the assumptions  \eqref{modulated00} and \eqref{modulated01} that  
\begin{eqnarray}
\label{inimodul}
\mathcal{H}^{\eps}_{v} (0)  \rightarrow \int_{\R^2} | u_{0} (x) - v(0,x)  |^{2} dx \text{ when } \eps  \rightarrow 0.
\end{eqnarray}
Moreover the dynamic of $\mathcal{H}^{\eps}_{v}$ is given by the following lemma:
\begin{Lemma}
\label{lengthy}
We have
\begin{eqnarray}
\label{lengthyCVRAI}
(\mathcal{H}^{\eps}_{v} )' (t) &=& - \eps  \int_{\R^2 \times \R^2 } \dd(v) (t,x): (\xi- v(t,x) ) \otimes  (\xi- v(t,x) )  f^{\eps}(t,x,\xi) dx d\xi 
\\ \nonumber &&+ \int_{\R^2  } \dd(v) (t,x): (u^{\eps} - v)^{\perp} (t,x) \otimes  (u^{\eps} - v)^{\perp} (t,x) dx 
\\ \nonumber  &&+ \eps \int_{\R^2  } A(v) (t,x) \cdot (\rho^{{\eps}} v - j^{\eps} ) dx 
\\ \nonumber  &&+  \int_{\R^2  } A(v) (t,x)  \cdot( v - u^{\eps}) dx .
\end{eqnarray}
\end{Lemma}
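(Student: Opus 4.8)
The plan is to differentiate the two nonnegative pieces of $2\mathcal{H}^{\eps}_{v}(t)=\eps I_1(t)+I_2(t)$ separately, where $I_1(t):=\int_{\R^2\times\R^2}|\xi-v|^2 f^{\eps}\,dx\,d\xi$ is the kinetic modulated energy and $I_2(t):=\int_{\R^2}|u^{\eps}-v|^2\,dx$ the fluid one, and then to verify that the cross terms not already displayed in \eqref{lengthyCVRAI} cancel. Throughout I would freely integrate by parts, the boundary terms being killed by the assumed smoothness and the fast decay of $f^{\eps}$ in $\xi$ and by $u^{\eps}-v\in L^2$; I would also repeatedly use $A(v)=\partial_t v+v\cdot\nabla v$ to trade the explicit time derivative of $v$ for $A(v)$ minus a transport term.

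For $I_1'$, I would split $\partial_t|\xi-v|^2=-2(\xi-v)\cdot\partial_t v$ for the part coming from the $t$-dependence of $v$, and use \eqref{VE2eps} with an integration by parts in $x$ and $\xi$ for the part carrying $\partial_t f^{\eps}$; the latter produces $\nabla_x|\xi-v|^2\cdot\xi$ together with the singular contribution $\tfrac1\eps\nabla_\xi|\xi-v|^2\cdot(\xi-u^{\eps})^\perp=\tfrac2\eps(\xi-v)\cdot(\xi-u^{\eps})^\perp$. The crucial observation — the whole reason for modulating by $v$ — is that, decomposing $(\xi-u^{\eps})^\perp=(\xi-v)^\perp+(v-u^{\eps})^\perp$ and using $(\xi-v)\cdot(\xi-v)^\perp=0$, the genuinely singular self-interaction disappears and only the finite cross term $\tfrac2\eps(\xi-v)\cdot(v-u^{\eps})^\perp$ survives. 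Grouping the two contributions involving $\nabla v$ reconstructs the quadratic form $-2\,\dd(v){:}(\xi-v)\otimes(\xi-v)$, the antisymmetric part of the velocity gradient being annihilated. After multiplying by $\eps/2$ and integrating in $\xi$ (so that $\int(\xi-v)f^{\eps}\,d\xi=j^{\eps}-\rho^{\eps}v$), this yields exactly the first and third lines of \eqref{lengthyCVRAI} plus a leftover cross term $\int(j^{\eps}-\rho^{\eps}v)\cdot(v-u^{\eps})^\perp\,dx$.

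For $I_2'$, I would first record that the $\eps$-system admits the same velocity formulation as the one derived in the introduction, namely $\partial_t u^{\eps}+u^{\eps}\cdot\nabla u^{\eps}+\nabla p^{\eps}=(\rho^{\eps}u^{\eps}-j^{\eps})^\perp$: indeed integrating \eqref{VE2eps} in $\xi$ removes the $\eps^{-1}$ term and gives $\partial_t\rho^{\eps}+\div_x j^{\eps}=0$, so adding this to \eqref{VE1eps} and taking the curl reproduces verbatim the curl computation of the introduction, $\eps$ playing no role. Then $\int(u^{\eps}-v)\cdot\nabla p^{\eps}=0$ since $u^{\eps}-v$ is divergence free, and the advective part $\int(u^{\eps}-v)\cdot(-u^{\eps}\cdot\nabla u^{\eps}+v\cdot\nabla v)$ collapses, by the standard relative-energy manipulation with $w:=u^{\eps}-v$ and $\div v=\div w=0$, to $-\int\dd(v){:}w\otimes w$. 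Here I would invoke the two-dimensional identity $w^{T}\dd(v)w+(w^\perp)^{T}\dd(v)w^\perp=\tr\dd(v)\,|w|^2=(\div v)|w|^2=0$ to rewrite this as $+\int\dd(v){:}w^\perp\otimes w^\perp$, which is the second line of \eqref{lengthyCVRAI}; the force term contributes $\int(u^{\eps}-v)\cdot(\rho^{\eps}u^{\eps}-j^{\eps})^\perp$ and the $A(v)$ term the fourth line $\int(v-u^{\eps})\cdot A(v)$.

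It then remains to check that the two leftover cross terms — $C:=\int(j^{\eps}-\rho^{\eps}v)\cdot(v-u^{\eps})^\perp$ from $I_1$ and $E:=\int(u^{\eps}-v)\cdot(\rho^{\eps}u^{\eps}-j^{\eps})^\perp$ from $I_2$ — cancel. Writing $j^{\eps}-\rho^{\eps}v=-(\rho^{\eps}u^{\eps}-j^{\eps})+\rho^{\eps}(u^{\eps}-v)$ and using $(u^{\eps}-v)\cdot(u^{\eps}-v)^\perp=0$ reduces $C$ to $\int(\rho^{\eps}u^{\eps}-j^{\eps})\cdot(u^{\eps}-v)^\perp$, whereupon the elementary planar identity $a\cdot b^\perp=-b\cdot a^\perp$ gives $C=-E$. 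Collecting the four surviving lines then yields \eqref{lengthyCVRAI}. The only genuinely delicate point is the treatment of the singular $\eps^{-1}$ term in $I_1'$, handled by the perpendicularity cancellation above; everything else is bookkeeping. I expect the main obstacle to be one of justification rather than computation: making the integrations by parts and the differentiation under the integral sign rigorous, for which the hypotheses of smoothness, compact support of the data and fast decay of $f^{\eps}$ in $\xi$ are exactly what is needed.
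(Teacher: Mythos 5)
Your proof is correct, but it follows a genuinely different route from the paper's. The paper never introduces a pressure: it rewrites $2\mathcal{H}^{\eps}_{v}$ through the Green function as in \eqref{square}, subtracts the conserved total energy $\mathcal{H}^{\eps}$, and differentiates the remainder, checking all cancellations by hand via manipulations of $G\star h$ (the terms $I_1,\dots,I_7$ and $Q_1,\dots,Q_7$). You instead differentiate the two halves of the modulated energy directly, treating the fluid part by the standard relative-energy computation based on the velocity formulation $\partial_t u^{\eps}+u^{\eps}\cdot\nabla u^{\eps}+\nabla p^{\eps}=(\rho^{\eps}u^{\eps}-j^{\eps})^{\perp}$ (which indeed carries over verbatim from the introduction since the $\eps^{-1}$ term integrates out in $\xi$), and the kinetic part by the perpendicularity cancellation $(\xi-v)\cdot(\xi-v)^{\perp}=0$ that kills the singular self-interaction. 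Your two structural observations — the cancellation of the cross terms via $j^{\eps}-\rho^{\eps}v=-(\rho^{\eps}u^{\eps}-j^{\eps})+\rho^{\eps}(u^{\eps}-v)$ together with $a\cdot b^{\perp}=-a^{\perp}\cdot b$, and the planar identity $\dd(v):w\otimes w=-\dd(v):w^{\perp}\otimes w^{\perp}$ for the trace-free matrix $\dd(v)$ — are exactly what the paper's $Q_4,Q_5,Q_7$ bookkeeping accomplishes less transparently, and they check out. What your version costs is the pressure: in the infinite-energy setting $\alpha\neq0$ one must verify that $\int(u^{\eps}-v)\cdot\nabla p^{\eps}\,dx=0$ is legitimate; this does hold here because for smooth compactly supported $\omega^{\eps},f^{\eps}$ the curl-free field $\partial_t u^{\eps}+\div(u^{\eps}\otimes u^{\eps})-(\rho^{\eps}u^{\eps}-j^{\eps})^{\perp}$ lies in $L^2$ and pairs to zero against the divergence-free $L^2$ field $u^{\eps}-v$, but it is precisely the step the paper's Green-function formulation is designed to avoid. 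What your version buys is brevity, a cleaner identification of where each of the four displayed terms comes from, and no reliance on the conservation of $\mathcal{H}^{\eps}$.
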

Recall the following classical notations, with the repeated index convention, 
\begin{enumerate}
\item $\xi\otimes\zeta$ is the matrix $(\xi_i \zeta_j)_{i,j}$ and, given $A$ a matrix, $\nabla : A $ is the vector $(\partial_{i} A_{ij})_{j}$.
\item If $A$ and $B$ are two matrices, $A:B$ is the scalar $A_{ij}B_{ij}$.
\end{enumerate}
\begin{proof}
To prove  this lemma we proceed as in the Appendix of  \cite{brenier}, with a few  modifications.
Let us observe that for any time,  $f^\eps (t)$, $\omega^\eps (t)$  have a compact support. 
Expanding the square of the right hand side of \eqref{square} we get 
\begin{eqnarray*}
2 \mathcal{H}^{\eps}_{v}(t) &=& \eps M_2(f^\eps)(t) + \eps \int_{\R^2}\rho^\eps |v|^2(t,x) dx - 2\eps  \int_{\R^2}  j^\eps(t) \cdot v(t)  dx 
\\ && -\int_{\R^2\times\R^2} (\rho^\eps+\omega^\eps-h)(t,x)(\rho^\eps+\omega^\eps-h)(t,y)G(x-y) dx dy,
\end{eqnarray*}
 hence
\begin{align*}
2\mathcal{H}^{\eps}_{v}(t)-2\mathcal{H}^\eps(t) &=  \eps \int_{\R^2}\rho^\eps |v|^2(t,x) dx - 2\eps  \int_{\R^2}  j^\eps(t) \cdot v(t)  dx -\int_{\R^2\times\R^2} h(t,x)h(t,y)G(x-y) dx dy\\
&+2\int_{\R^2\times\R^2} (\rho^\eps+\omega^\eps)(t,x)h(t,y)G(x-y) dx dy.
\end{align*}
Therefore  using that the energy $\mathcal{H}^\eps(t)$ is  independent of the time, we get 

\begin{align*}
2{\mathcal{H}^{\eps}_{v}}'(t) &= \,\stackrel{2I_1}{\overbrace{\eps \int_{\R^2} \partial_t \rho^\eps|v|^2 (t,x) d x}}\quad + \quad\stackrel{2I_2}{\overbrace{\eps \int_{\R^2} \rho^\eps \partial_t |v|^2(t,x) d x}}\quad\stackrel{2I_3}{\overbrace{-2\eps  \int_{\R^2}   \partial_t j^\eps(t) \cdot v(t)  dx}} \quad\stackrel{2I_4}{\overbrace{-2\eps   \int_{\R^2}j^\eps(t) \cdot  \partial_t v(t) dx }} \\
&\stackrel{2I_5}{\overbrace{-2\int_{\R^2\times\R^2} \partial_t h(t,x)h(t,y)G(x-y) dx dy}}\\
&+\quad\stackrel{2I_6}{\overbrace{2\int_{\R^2\times\R^2} (\rho^\eps+\omega^\eps)(t,x)\partial_t h(t,y)G(x-y) dx dy}} \quad + \quad\stackrel{2I_7}{\overbrace{2\int_{\R^2\times\R^2} \partial_t(\rho^\eps+\omega^\eps)(t,x)h(t,y)G(x-y) dx dy}}.
\end{align*}
We are going to deal first with $I_7$ and then with $I_3$.
By integrating \eqref{VE2eps} with respect to $\xi$ we get 
\begin{align}
\label{The1}
\partial_t \rho^\eps + \text{div}_x(j^\eps) &= 0.
\end{align}
Adding this to \eqref{VE1eps} then yields
\begin{align*}
\partial_t(\rho^\eps+\omega^\eps) + \textnormal{div}_x[j^\eps +\omega^\eps u^\eps] = 0,
\end{align*}
We therefore have
\begin{align*}
I_7 =  \int_{\R^2} \nabla (G\star h) \cdot [j^\eps+\omega^\eps u^\eps] d x
=  \int_{\R^2} v\cdot [j^\eps+\omega^\eps u^\eps]^\perp  d x ,
\end{align*}
by using \eqref{bill}.

Let us also observe that multiplying  \eqref{VE2eps} by $\xi$ and then  integrating with respect to $\xi$ implies the following  vectorial equation
 \begin{align}
\label{The2}
\partial_t j^\eps + \nabla : \left\{\int_{\R^2} \xi\otimes\xi f^\eps d\xi\right\} &= \frac{1}{\eps}(j^\eps-\rho^\eps u^\eps)^\perp.
\end{align}
Using this we get also
\begin{align*}
I_3 &= \eps \int_{\R^2} v\cdot \left[\nabla : \int_{\R^2} \xi \otimes \xi f^\eps d\xi\right]d x + \int_{\R^2} \rho^\eps v \cdot {u^\eps}^\perp dx - \int_{\R^2} v\cdot {j^\eps}^\perp dx\\
&=- \eps \int_{\R^2} \dd(v) : \left\{\int_{\R^2} \xi\otimes \xi f^\eps d\xi\right\} dx + \int_{\R^2}(\rho^\eps +\omega^\eps)v\cdot {u^\eps}^\perp dx - I_7.
\end{align*}
And since $\nabla\Big[G\star(\rho^\eps+\omega^\eps)\Big]=-{u^\eps}^\perp$ which divergence gives back $\rho^\eps+\omega^\eps$, we have
\begin{align*}
\int_{\R^2}(\rho^\eps +\omega^\eps)v\cdot {u^\eps}^\perp dx &= \int_{\R^2}\Delta_{x}\Big[G\star(\rho^\eps +\omega^\eps)\Big]v\cdot {u^\eps}^\perp dx\\
&= \int_{\R^2}{u^\eps}^\perp \cdot \nabla (v\cdot {u^\eps}^\perp) dx\\
&= \int_{\R^2}{u^\eps}^\perp\otimes {u^\eps}^\perp : \nabla v\, dx  + \int_{\R^2}{u^\eps}^\perp\cdot \nabla {u^\eps}^\perp \cdot v \,dx.
\end{align*}
where the second integral vanishes since $2 {u^\eps}^\perp \cdot \nabla {u^\eps}^\perp= \nabla \Big[G\star(\rho^\eps+\omega^\eps)^2\Big]$ and $v$ is divergence free. 
We eventually get by symmetry
\begin{align*}
I_3+I_7 = - \eps \int_{\R^2} \dd(v) : \left\{\int_{\R^2} \xi\otimes \xi f d\xi\right\} dx +\int_{\R^2} \dd(v) : {u^\eps}^\perp\otimes {u^\eps}^\perp d x,
\end{align*}
that we decompose into:
\begin{align*}
I_3+I_7 &= \stackrel{Q_1}{\overbrace{-\eps \int_{\R^2} \dd(v) :\left\{\int_{\R^2}(\xi-v)\otimes(\xi-v)f^\eps d\xi\right\}dx + \int_{\R^2} \dd(v): (v^\perp-{u^\eps}^\perp)\otimes (v^\perp-{u^\eps}^\perp) dx}}\\
&+\stackrel{Q_2}{\overbrace{-\eps\int_{\R^2} \nabla v : j^\eps\otimes v \, d x}}+\stackrel{Q_3}{\overbrace{-\eps\int_{\R^2} \nabla v : v \otimes j^\eps \, d x}}+\stackrel{Q_4}{\overbrace{\int_{\R^2} \nabla v : v^\perp\otimes {u^\eps}^\perp \, d x}}+\stackrel{Q_5}{\overbrace{\int_{\R^2} \nabla v : {u^\eps}^\perp\otimes v^\perp \, d x}}\\
&+\stackrel{Q_6}{\overbrace{\eps\int_{\R^2} \rho \nabla v : v \otimes v \, d x}}+\stackrel{Q_7}{\overbrace{-\int_{\R^2} \nabla v : v^\perp \otimes v^\perp \, d x}}.
\end{align*}
Therefore we have 
\begin{eqnarray*}
{\mathcal{H}^{\eps}_{v}}'(t)  = I_1 + I_2 + I_4
+ I_5 + I_6 + Q_1 + ...  + Q_7 .
\end{eqnarray*}
Let us see a first cancellation: we have, with the repeated index convention,
\begin{align*}
Q_3 = -\eps \int_{\R^2} (\partial_j v_i)\,v_i\,j^\eps_j dx
=\frac{\eps}{2} \int_{\R^2}  |v|^2 \textnormal{div}_x(j^\eps) dx
=-I_1 ,
\end{align*}
by using \eqref{The1}.

Thus 
\begin{eqnarray*}
{\mathcal{H}^{\eps}_{v}}'(t)  &=&
  I_2 + I_4
+ I_5 + I_6 + Q_1 + Q_2 +
Q_4 + \ldots + Q_7   
\\ &=& Q_1
 + (I_2 + I_4 + Q_2 + Q_6 )
+ (I_5 + I_6 + Q_4 + Q_5+ Q_7 ) .
\end{eqnarray*}
Let us observe that $Q_1$ is equal to the sum of the first two terms of the right hand side of 
\eqref{lengthyCVRAI}, so that, in order to prove Lemma \ref{lengthy}, it is sufficient to prove that the third term is equal to $I_2 + I_4 + Q_2 + Q_6$ and that the fourth term  is equal to $I_5 + I_6 + Q_4 + Q_5+ Q_7 $ (here we have distinguished the terms with a factor $\eps$ and the other ones).

Regarding the third term, we have 
\begin{align*}
\int_{\R^2} A(v)\cdot (\rho^\eps v-j^\eps) dx &= \frac{1}{2}\int_{\R^2} \rho^\eps \partial_t |v|^2 dx - \int_{\R^2} \partial_t v \cdot j^\eps dx + \int_{\R^2} v_j (\partial_j v_i) \rho^\eps v_i dx -  \int_{\R^2} v_j (\partial_j v_i) j^\eps_i dx \\
&= \frac{1}{2}\int_{\R^2} \rho^\eps \partial_t |v|^2 dx - \int_{\R^2} \partial_t v \cdot j^\eps dx + \int_{\R^2} \rho^\eps \nabla v : v\otimes v\, dx -  \int_{\R^2} \nabla v : j^\eps \otimes v\, dx,
\end{align*}
and hence, multiplying by $\eps$, we get:
\begin{align*}
 \eps \int_{\R^2} A(v)\cdot (\rho^\eps v-j^\eps) dx =  I_2+I_4+Q_2+Q_6 .
\end{align*}

We finally have (we use $\text{div}_x v= 0$ for the first equality)
\begin{align*}
\int_{\R^2} A(v) \cdot (v-u^\eps)dx &= \frac{1}{2}\int_{\R^2} \partial_t |v|^2 dx - \int_{\R^2} \partial_t v \cdot u^\eps dx -  \int_{\R^2} v_j (\partial_j v_i) u^\eps_i dx \\
&=I_5+I_6 - \int_{\R^2} \nabla v : u^\eps \otimes v\,dx ,
\end{align*}
so that it only remains to prove that
\begin{align*}
 - \int_{\R^2} \nabla v : u^\eps \otimes v\,dx =Q_4 + Q_5+ Q_7 .
\end{align*}
Actually we are going to prove that 
\begin{align*}
  \int_{\R^2} \nabla v : u^\eps \otimes v\,dx + Q_4= 0 \text{ and }  Q_5 + Q_7 = 0  .
\end{align*}
In order to do so, let us first note that we have the following relations for $a,b$ and $c$ three vector fields
\begin{align*}
\Big[c\cdot \nabla a\Big]^\perp = c\cdot \nabla a^\perp 
\quad  \Rightarrow \quad 
 \nabla a : b\otimes c = \Big[c\cdot \nabla a\Big] \cdot b =  \nabla a^\perp : b^\perp\otimes c.
\end{align*}
Hence
\begin{eqnarray*}
  \int_{\R^2} \nabla v : u^\eps \otimes v\,dx  + Q_4 &=&\int_{\R^2} \nabla v : \Big[u^\eps \otimes v + v^\perp\otimes {u^\eps}^\perp \Big]\, dx
  \\ &=& \int_{\R^2} \nabla v^\perp : \Big[{u^\eps}^\perp \otimes v - v\otimes {u^\eps}^\perp \Big]\, dx,
\end{eqnarray*}
and since $v=K[h]=\nabla^\perp (G\star h)$, we get 
\begin{align*}
  \int_{\R^2} \nabla v : u^\eps \otimes v\,dx  + Q_4  &= -\int_{\R^2} \partial_j \partial_i (G\star h) \Big[{u^\eps_i}^\perp v_j-v_i {u^\eps_j}^\perp\Big] dx = 0,
\end{align*}
by symmetry.

On the other hand
\begin{align*}
 Q_5 + Q_7 = &=\int_{\R^2} \nabla v : \Big[(u^\eps-v)^\perp \otimes v^\perp\Big] dx \\
 &= \int_{\R^2} \nabla v^\perp : \Big[(u^\eps-v) \otimes v^\perp\Big] dx \\
&= \int_{\R^2} \partial_j \partial_i (G\star h) (u^\eps_i-v_i) \partial_j(G\star h) dx \\
&= \int_{\R^2} \partial_i \partial_j (G\star h) (u^\eps_i-v_i) \partial_j(G\star h) dx\\
&= \int_{\R^2} \nabla \left|\frac{1}{2}\nabla (G\star h)\right|^2 \cdot (u^\eps-v)dx =0.
\end{align*}
\end{proof}

With Lemma \ref{lengthy} in hands the proof of Theorem \ref{modulated} then follows the proof of Theorem $6.1$ in  \cite{brenier}, with again a few modifications. 
For instance a crucial estimate in the strategy of \cite{brenier} is that $\|  \sqrt{\eps} j^\eps \|_{L^\infty_t (L^1_x)}$ is bounded uniformly in $\eps$.
Unlike Brenier, we cannot use here the energy of the system as the second term in  \eqref{lavraienergy} does not have a definite sign. 
Instead we are going to bootstrap an estimate involving both $\|  \sqrt{\eps} j^\eps \|_{L^\infty_t (L^1_x)}$ and $ \| \mathcal{H}^{\eps}_{v} \|_{L^\infty }$.

By Cauchy-Schwarz inequality we get 
\begin{eqnarray}
\nonumber
\sqrt{\eps}  \int_{\R^2}|j^\eps|(t,x) dx &\leq& \sqrt{\eps}  \int_{\R^2\times\R^2} |\xi-v(t,x)| f^\eps(t,x,\xi) d\xi\, dx + \sqrt{\eps} \int_{\R^2} |v(t,x)| \rho^\eps(t,x) \,dx \\
    \label{jsisi}                                                   &\leq&  \Big( (\mathcal{H}^{\eps}_{v}(t) )^{\frac{1}{2}} +  \| v  \|_{L^\infty}    \Big) \|  \rho^\eps_{0} \|_{L^1} .
\end{eqnarray}
Above we used that \eqref{The1} implies that  $  \|  \rho^\eps (t) \|_{L^1} = \|  \rho^\eps_{0} \|_{L^1}$ for any $t$.

Now, using  lemma \ref{lengthy}, we deduce that  
\begin{align*}
{\mathcal{H}^{\eps}_{v}}'(t) \leq  2 \|\dd(v)\|_{L^\infty}\mathcal{H}^{\eps}_{v}(t) + \eps \|A(v)\|_{L^\infty}\|v\|_{L^\infty}\|\rho^{\eps}_{0}\|_{L^1} + \eps \|A(v)\|_{L^\infty}\|j^\eps\|_{L^\infty_t(L^1_x)} + \sqrt{2} \|A(v)\|_{L^\infty_{t}(L^2_x)} (\mathcal{H}^{\eps}_{v}(t))^{\frac{1}{2}},
\end{align*}
that is,
\begin{align*}
{\mathcal{H}^{\eps}_{v}}'(t) \leq \textnormal{P}_{\text{in}}\Big(\|v\|_{W^{1,\infty}}+\|v\|_{L^\infty_t(W^{1,2}_x)}\Big)\big[1+\mathcal{H}^{\eps}_{v}(t)\big],
\end{align*}
for some polynomial function $\textnormal{P}_{\text{in}}$. Here we used the assumption \eqref{modulated00} to bound $\|\rho^{\eps}_{0}\|_{L^1}$.

A Gronwall lemma gives then that for a fixed smooth vector field $v$ (such as $v(t)\in E_\alpha$ for all $t$),
\begin{eqnarray}
\label{bornehv}
\mathcal{H}^{\eps}_{v} \text{ is bounded in}\, L^\infty_t ,
\end{eqnarray}
which implies, according to \eqref{defhv} and \eqref{jsisi}, that 
\begin{align}
\label{bornej}
(\sqrt{\eps} j^\eps)_\eps\quad&\text{is bounded in}\quad L^\infty_t(L^1_x)\\
\label{borneu} (u^\eps-v)_\eps \quad&\text{is bounded in}\quad L^\infty_t(L^2_x).
\end{align}

Let us now look for a bound of $\rho^\eps {u^\eps}$. 
We start by writing
\begin{align}
\label{compo}
\rho^\eps {u^\eps} = (\rho^\eps+\omega^\eps-h)\stackrel{K[\rho^\eps+\omega^\eps-h]}{\overbrace{({u^\eps}-v)}} + (h-\omega^\eps)(u^\eps-v) +\rho^\eps v.
\end{align}
From \eqref{VE1eps} we infer that $  \|  \omega^\eps (t) \|_{L^2} = \|  \omega^\eps_{0} \|_{L^2}$ for any $t$. Using the assumption \eqref{modulated00} we deduce that 
$(\omega^\eps)_\eps$ is bounded in $L^\infty_t(L^2_x)$, and we recall that  $(\rho^\eps)_\eps$ is bounded in $L^\infty_t(L^1_x)$.
Since $h,v\in L^\infty_{t,x}$, using \eqref{borneu} we see  that the two last factors of the right hand side are bounded in $ L^\infty_t(L^1_x)$. 

For the first one we will use, 
as in \cite{brenier},  the following lemma.
\begin{Lemma}
For any $g$ smooth such as $\displaystyle\int_{\R^2}g(x)dx = 0$, we have 
\begin{eqnarray}
\| gK[g] \|_{W^{-1,1}}\leq \frac{3}{2}\|K[g]\|_{L^2} .
\end{eqnarray}
\end{Lemma}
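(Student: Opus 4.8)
The plan is to use the duality characterisation of the $W^{-1,1}$ norm together with an algebraic identity that rewrites the product $gK[g]$ as a spatial derivative of quantities quadratic in $u:=K[g]$. First I would record the structure of $u$: since $\int_{\R^2}g=0$, the observation \eqref{crazy} guarantees $u\in L^2(\R^2)$, while by construction $u=\nabla^\perp(G\star g)$ is divergence free and satisfies $\curl u=g$. Recalling that $W^{-1,1}(\R^2)=\big(W^{1,\infty}(\R^2)\big)'$, it suffices to bound $\big|\int_{\R^2} gu\cdot\phi\,dx\big|$ uniformly over vector fields $\phi$ in the unit ball of $W^{1,\infty}$.

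The heart of the argument is the identity
\[
 gu=\nabla^\perp\Big(\tfrac{|u|^2}{2}\Big)-\big[\div(u\otimes u)\big]^\perp,
\]
valid for any divergence-free $u$ with $\curl u=g$. I would verify it componentwise, using only $\div u=0$ (which turns $\div(u\otimes u)$ into $u\cdot\nabla u$) and $\curl u=g$; it is the two-dimensional incarnation of the vector identity $u\cdot\nabla u=\nabla\tfrac{|u|^2}{2}-(\curl u)\,u^\perp$, rotated by $\pi/2$. The decisive point is that the right-hand side is a first-order derivative of expressions quadratic in $u$, hence integrable since $u\in L^2$. Pairing with $\phi$ and integrating by parts then yields
\[
 \int_{\R^2} gu\cdot\phi\,dx=-\int_{\R^2}\tfrac{|u|^2}{2}\,\curl\phi\,dx-\int_{\R^2}(u\otimes u):\nabla\phi^\perp\,dx,
\]
so that, bounding the first term by $\tfrac12\|\curl\phi\|_{L^\infty}\|u\|_{L^2}^2$ and the second by $\|\nabla\phi^\perp\|_{L^\infty}\|u\|_{L^2}^2$ and summing the two contributions for $\|\phi\|_{W^{1,\infty}}\le 1$, one recovers the announced constant $\tfrac32$.

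The genuinely delicate points are not the estimates but their justification. The integration by parts must produce no boundary contribution: this is where I would invoke the smoothness and (in the intended application) compact support of $g$, which force $u\in L^2$ with sufficient decay, together with the fact that $\tfrac{|u|^2}{2}$ and $u\otimes u$ lie in $L^1$; the identity above then holds in $\mathscr{D}'$ and pairs legitimately with $\phi\in W^{1,\infty}$ through a standard density argument. I would also emphasise that the bound is \emph{quadratic} in $\|K[g]\|_{L^2}$ (the right-hand side of the statement should read $\tfrac32\|K[g]\|_{L^2}^2$), which is precisely the scaling needed afterwards to control the first factor $(\rho^\eps+\omega^\eps-h)\,K[\rho^\eps+\omega^\eps-h]$ of \eqref{compo} by $\|u^\eps-v\|_{L^2}^2$, and hence by the modulated energy via \eqref{borneu}.
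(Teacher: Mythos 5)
Your proposal is correct and follows essentially the same route as the paper: both test against a vector field in the unit ball of $W^{1,\infty}$ and rewrite $g\,K[g]$, via the stream function $G\star g$ (equivalently the two-dimensional identity $u\cdot\nabla u=\nabla\tfrac{|u|^2}{2}+(\curl u)\,u^\perp$), as a first-order derivative of quantities quadratic in $K[g]$, which a single integration by parts converts into the stated bound with constant $\tfrac12+1=\tfrac32$. Your remark that the right-hand side should read $\tfrac32\|K[g]\|_{L^2}^{2}$ is also well taken; the missing square is a typo that the paper's own final display reproduces.
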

Let us provide a proof for sake of completeness.
\begin{proof}
Let  $\varphi$ be a smooth compactly supported vector field over $\R^2$. 
We have
\begin{align*}
\int_{\R^2} \varphi(x)\cdot  g(x) K[g]^\perp(x) dx 
&=- \int_{\R^2} \varphi \cdot \Delta\Big[G\star g\Big]\nabla \Big[G\star g\Big]dx\\
&=- \int_{\R^2} \varphi_i \partial_{jj}\Big[G\star g\Big]\partial_i \Big[G\star g\Big]dx \\
&= \int_{\R^2} \varphi_i \partial_{j}\Big[G\star g\Big]\partial_j \partial_i \Big[G\star g\Big]dx + \int_{\R^2} \partial_j \varphi_i \partial_{j}\Big[G\star g\Big] \partial_i \Big[G\star g\Big]dx \\
&= -\int_{\R^2} \Big( \text{div}(\varphi) (x) \frac{1}{2}\Big|\nabla G\star g\Big|^2+ \nabla\Big[G\star g\Big]\cdot \nabla(\varphi) \cdot \nabla\Big[G\star g\Big] \Big) dx,
\end{align*}
and hence 
\begin{align*}
\left|\int_{\R^2} \varphi(x)\cdot  gK[g](x) dx \right| \leq \frac{3}{2}\|\varphi\|_{W^{1,\infty}} \|K[g]\|_{L^2},
\end{align*}
what yields the result.
\end{proof}
The lemma above thus gives that the first term in the right hand side of \eqref{compo} is bounded in $L^\infty_t(W^{-1,1}_x)$, so that $\rho^\eps u^\eps$ is eventually bounded in $L^\infty_t(H^{-m}_x)$, for some $m$ large enough.

Now we are going to use this to deduce that $(\rho^\eps)_\eps$ is relatively compact in $\mathscr{C}^0_t(\mathscr{D}'(\R^2))$.
We first combine  \eqref{The1} and \eqref{The2} to get 
\begin{align*}
\partial_t (\rho^\eps - \eps \text{div}_x({j^\eps}^\perp)) = \eps \left[ \nabla : \left\{\int_{\R^2} \xi\otimes\xi f^\eps d\xi\right\}
\right]^\perp -\rho^\eps u^\eps.
\end{align*}
Because of  \eqref{bornehv} , the first term of the right hand side is bounded in $\mathscr{C}^0_t(W^{-1,1}_x)$ that we already injected in some $\mathscr{C}^0_t(H^{-m}_x)$ in which the second term is also bounded. Since $H^{-m}(B(0,N))\hookrightarrow H^{-m-1}(B(0,N))$ is compact for all $N\in \N$, Ascoli's theorem and a diagonal extraction allow us to conclude that $(\rho^\eps-\eps \text{div}_x({j^\eps}^\perp))_\eps$ is (up to an extraction) converging in $\mathscr{C}^0_t(\mathscr{D}'(\R^2))$. But $(\sqrt{\eps}j^\eps)_\eps$ is bounded in $\mathscr{C}^0_t(L^1_x)\hookrightarrow \mathscr{C}^0_t(\mathscr{D}'(\R^2))$ and hence we get the convergence of $(\rho^\eps)_\eps$ in $\mathscr{C}^0_t(\mathscr{D}'(\R^2))$  and we denote by $\rho$ its limit.

The previous convergence holds (up to an extraction) as well for $(\omega^\eps)_\eps\rightarrow \omega$, the reasoning being simplified because $(\omega^\eps)_\eps$ is bounded in every $L^\infty_t(L^p_x)$. We hence get a subsequence of $(u^\eps)_\eps=(K[\rho^\eps+\omega^\eps])_\eps$ that converges in $\mathscr{C}^0_t(\mathscr{D}'(\R^2))$ to some $u$. But $(u^\eps-v)_\eps$ is bounded in $\mathscr{C}^0_t(L^2_x)$ so that the previous convergence implies $(u^\eps-v)_\eps \rightarrow (u-v)$ in $\mathscr{C}^0_t(L^2_x-w)$, with furthermore the pointwise bound
\begin{align*}
\|u(t)-v(t)\|_{L^2} \leq \operatorname*{\underline{\lim}}_{\eps \rightarrow 0} \|u_\eps(t)-v(t)\|_{L^2}.
\end{align*}
$(\mathcal{H}^{\eps}_{v})_\eps$ is up to an extraction converging in $L^\infty_t- w \star$ to some function $\mathcal{H}_{v}$ and the previous bound gives 
\begin{align}
\label{ineq:enermod}\|u(t)-v(t)\|_{L^2}^2 \leq \mathcal{H}_{v}(t),
\end{align}
almost for all $t\in[0,T]$. Now lemma \ref{lengthy} gives, 
\begin{align*}
 {\mathcal{H}^{\eps}_{v}}'(t) &\leq 2 \|\dd(v)\|(t) \mathcal{H}^{\eps}_{v}(t) +  \int_{\R^2} A(v) (t,x)\cdot ( v - u^{\eps})(t,x)dx
+ \Big[1+ \|v\|_{W^{1,\infty}}^3\Big]\Big[\eps \|j^\eps\|_{L^\infty_t(L^1_x)}  + \eps  \|\rho^{{\eps}} \|_{L^\infty_t(L^1_x)}\Big],
\end{align*}
hence
\begin{align*}
 \mathcal{H}^{\eps}_{v}(t) &\leq \mathcal{H}^{\eps}_{v}(0) \exp\left\{ \int_0^t2 \|\dd(v)\|(s)\,ds\right\} +  \int_0^t \exp\left\{ \int_s^t 2 \|\dd(v)\|(\sigma)\,d\sigma\right\}\int_{\R^2} A(v) (s,x)\cdot ( v - u^{\eps})(s,x)dx\,ds
\\ &+ \Big[1+ \|v\|_{W^{1,\infty}}^3\Big]\Big[\eps \|j^\eps\|_{L^\infty_t(L^1_x)}  + \eps  \|\rho^{{\eps}} \|_{L^\infty_t(L^1_x)}\Big],
\end{align*}
Using previous bounds, we see that the product of the second line goes to $0$ with $\eps$. 
We  now pass to the limit in the previous inequality, using \eqref{inimodul}, to obtain
\begin{align*}
 \mathcal{H}_{v}(t) &\leq \|u_{0}-v\|_{L^2}^2 \exp\left\{ \int_0^t2 \|\dd(v)\|(s)\,ds\right\} +  \int_0^t \exp\left\{ \int_s^t 2 \|\dd(v)\|(\sigma)\,d\sigma\right\}\int_{\R^2} A(v) (s,x)\cdot ( v - u)(s,x)dx\,ds,
\end{align*}
first in the distributional sense (that is against any positive test function) and then almost everywhere since both sides of the previous inequality are $L^\infty$ functions.

Then \eqref{ineq:enermod} allows to obtain the dissipation formulation introduced in Definition \ref{dissip}, at least for any smooth (in time/space) vector field $v$ such as $v(t) \in E_\alpha$ and  compactly supported $\curl v(t)$, for all $t$, which is sufficient in view of Remark \ref{rem:lions}.
\end{proof}
\bigskip
{\bf Acknowledgements.} The second author was partially supported by the Agence Nationale de la Recherche, Project CISIFS,  grant ANR-09-BLAN-0213-02.  
He thanks Olivier Glass and Walter Strauss for some fruitful discussions.

\end{document}